\tikzset{->-/.style={decoration={
  markings,
  mark=at position #1 with {\arrow{>}}},postaction={decorate}}}
\newtheorem{theorem}{Theorem}[section]
\newtheorem{corollary}[theorem]{Corollary}
\newtheorem{lemma}[theorem]{Lemma}
\newtheorem{conjecture}[theorem]{Conjecture}
\newtheorem{fact}[theorem]{Fact}
\newtheorem{claim}[theorem]{Claim}
\theoremstyle{definition}
\newtheorem{definition}[theorem]{Definition}
\newcommand{\blackvertex}[1]{\fill (#1) circle (2pt);}
\newcommand{\whitevertex}[1]{\fill (#1) circle (2pt);\fill[white] (#1) circle (1.5pt);}
\definecolor{blue}{rgb}{0,0,1}
\newcolumntype{I}{>{\centering\arraybackslash} m{.15\linewidth}} 
\newcolumntype{A}{>{\centering\arraybackslash} m{.18\linewidth}} 
\newcolumntype{V}{>{\centering\arraybackslash} m{.22\linewidth}} 
\newcolumntype{W}{>{\centering\arraybackslash} m{.28\linewidth}}
 \newenvironment{cit}
{
    \begin{list}{- \ }{}
        \setlength{\topsep}{0pt}
        \setlength{\parskip}{0pt}
        \setlength{\partopsep}{0pt}
        \setlength{\parsep}{0pt}         
        \setlength{\itemsep}{0pt} 
}
{
    \end{list} 
}
\newcounter{confignum}	
\newcommand{\configuration}[1]{\refstepcounter{confignum}\label{#1}\ref{#1}}
\def\ex{\operatorname{ex}}
\newcounter{casenum}	
\newcounter{subcasenum}	
\numberwithin{subcasenum}{casenum}
\newcounter{subsubcasenum}	
\numberwithin{subsubcasenum}{subcasenum}
\renewcommand{\thecasenum}{\arabic{casenum}}
\newcommand{\hs}{\hspace{1mm}}
\newcounter{stagenum}
\newenvironment{mycases}
{
  \list{}{%
    \leftmargin0.5cm   
    \rightmargin0cm
  }
  \item\relax
	\setcounter{casenum}{0}
}
{	
	\endlist
}
\newcommand{\mycase}[1]{
	\vspace{0.5em}
	
	\refstepcounter{casenum}
	\noindent\hspace{-0.5cm}\textit{Case \thecasenum: #1}
}
\def\ex{\operatorname{ex}}
\begin{document}

\title{$(4,2)$-choosability of planar graphs with forbidden structures}
\author{
Zhanar Berikkyzy$^1$
\and
Christopher Cox$^2$
\and
Michael Dairyko$^1$
\and
Kirsten Hogenson$^1$
\and
Mohit Kumbhat$^1$
\and
Bernard Lidick\'y$^{1,3}$
\and
Kacy Messerschmidt$^1$
\and
Kevin Moss$^1$
\and
Kathleen Nowak$^1$
\and
Kevin F. Palmowski$^1$
\and
Derrick Stolee$^{1,4}$
}

\maketitle

\begin{abstract}
All planar graphs are 4-colorable and 5-choosable, while some planar graphs are not 4-choosable.
Determining which properties guarantee that a planar graph can be colored using lists of size four has received significant attention.
In terms of constraining the structure of the graph, for any $\ell \in \{3,4,5,6,7\}$, a planar graph is 4-choosable if it is $\ell$-cycle-free.
In terms of constraining the list assignment, one refinement of $k$-choosability is \emph{choosability with separation}.
A graph is \emph{$(k,s)$-choosable} if the graph is colorable from lists of size $k$ where adjacent vertices have at most $s$ common colors in their lists.
Every planar graph is $(4,1)$-choosable, but there exist planar graphs that are not $(4,3)$-choosable.
It is an open question whether planar graphs are always $(4,2)$-choosable.
A \emph{chorded $\ell$-cycle} is an $\ell$-cycle with one additional edge.
We demonstrate for each $\ell \in \{5,6,7\}$ that a planar graph is $(4,2)$-choosable if it does not contain chorded $\ell$-cycles.
\end{abstract}

\footnotetext[1]{Department of Mathematics, Iowa State University, Ames, IA, U.S.A. \texttt{$\{$zhanarb,mdairyko,kahogens,mkumbhat,lidicky,kacymess,kmoss,knowak,kpalmow,dstolee$\}$@iastate.edu}}
\footnotetext[2]{Department of Mathematical Sciences, Carnegie Mellon University, Pittsburg, PA, U.S.A. \texttt{cocox@andrew.cmu.edu}}
\footnotetext[3]{Supported by NSF grant DMS-1266016.}
\footnotetext[4]{Department of Computer Science, Iowa State University, Ames, IA, U.S.A.}

\section{Introduction}

A \emph{proper coloring} is an assignment of colors to the vertices of a graph $G$ such that adjacent vertices are assigned distinct colors. 
A \emph{$(k,s)$-list assignment $L$} is a function that assigns a list $L(v)$ of $k$ colors to each vertex $v$ so that $|L(v)\cap L(u)|\le s$ whenever $uv\in E(G)$. 
A proper coloring $\phi$ of $G$ such that $\phi(v) \in L(v)$ for all $v\in V(G)$ is called an \emph{$L$-coloring}.
We say that a graph $G$ is \emph{$(k,s)$-choosable} if, for any $(k,s)$-list assignment $L$, there exists an $L$-coloring of $G$.
We call this variation of graph coloring \emph{choosability with separation.} 
Note that when a graph is $(k,k)$-choosable, we simply say it is \emph{$k$-choosable}.
Observe that if $G$ is $(k,t)$-choosable, then $G$ is $(k,s)$-choosable for all $s \le t.$  
A notable result from Thomassen \cite{thomassen} states that every planar graph is 5-choosable, so it follows that all planar graphs are $(5,s)$-choosable for all $s\le5.$

Forbidding certain structures within a planar graph is a common restriction used in graph coloring. 
Theorem~\ref{thm:31choosableconditions} summarizes the current knowledge on $(3,1)$-choosability of planar graphs.
\v{S}krekovski \cite{skrekovski} conjectured that all planar graphs are $(3,1)$-choosable; this question is still open and is presented below as Conjecture \ref{conj:31choosable}.

\begin{conjecture}[\v{S}krekovski \cite{skrekovski}]\label{conj:31choosable}
If $G$ is a planar graph, then $G$ is $(3,1)$-choosable.
\end{conjecture}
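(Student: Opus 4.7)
The plan is to attack Conjecture~\ref{conj:31choosable} by the standard minimum-counterexample / discharging framework that has been successful for many planar coloring problems, while exploiting the extra flexibility provided by the separation condition $|L(u)\cap L(v)|\le 1$. So let $G$ be a planar graph with fewest vertices (then fewest edges) that admits a $(3,1)$-list assignment $L$ with no $L$-coloring; fix a planar embedding of $G$. The first step is to establish basic reductions on $G$. A vertex of degree at most $2$ can be removed, since the rest extends by minimality and the deleted vertex then has at least one free color in its list of size $3$. A separating cycle of small length, or a small-length edge cut, can likewise be ruled out by coloring each piece and then combining across the separator, using the separation property to argue that one can recolor along a short sequence of vertices to match up. After these standard moves $G$ is $2$-connected with $\delta(G)\ge 3$.

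The heart of the approach is to assemble a catalogue of reducible configurations that is rich enough to cover every planar graph. The separation condition is exactly what one needs to push through arguments that fail for ordinary $3$-choosability: whenever a small subgraph $H$ is to be re-attached, adjacent vertices outside of $H$ contribute only small overlap with a vertex's list, so partial colorings extend along short alternating-color paths. I expect the main configurations to involve short faces (triangles and $4$-faces) incident to several $3$-vertices, and degree-$3$ vertices whose neighborhoods have specific list structure. To prove reducibility, I would, for each configuration, temporarily contract or delete edges to obtain a smaller planar graph $G'$, inherit a $(3,1)$-assignment $L'$, apply minimality to obtain an $L'$-coloring, and then extend by analyzing how the inherited colors interact with the lists on the deleted piece; in the hardest cases this extension will require swapping colors along Kempe-type chains whose structure is controlled by the fact that any two adjacent lists share at most one color.

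Once the catalogue is fixed, the remaining step is discharging. Assign initial charge $d(v)-4$ to each vertex $v$ and $\ell(f)-4$ to each face $f$; by Euler's formula the total charge is $-8$. I would design local discharging rules that send charge from vertices of degree at least $4$ and from faces of length at least $5$ to adjacent $3$-vertices and $3$-faces, in such a way that the absence of every reducible configuration forces each vertex and face to end up with nonnegative charge, contradicting the total $-8$.

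The main obstacle, and the reason this conjecture has remained open, is the completeness of the reducible family. With list size exactly $3$ there is very little slack, so the only leverage comes from the separation condition, and it is easy to produce configurations that look reducible until one discovers a bad list pattern where every extension is blocked by a triangle of lists sharing colors pairwise in a consistent way. I would expect the majority of the work to go into either discovering a new type of reducible configuration that exploits long alternating walks across several faces, or else into refining the discharging rules (for instance, by tracking charge along edges rather than just at vertices and faces) so that a smaller reducible family suffices. A useful warm-up, before attempting the full conjecture, would be to prove $(3,1)$-choosability under some mild girth or triangle-adjacency hypothesis and then try to weaken the hypothesis step by step.
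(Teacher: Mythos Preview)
The statement you are attempting to prove is Conjecture~\ref{conj:31choosable}, which the paper explicitly presents as an \emph{open problem}: ``\v{S}krekovski conjectured that all planar graphs are $(3,1)$-choosable; this question is still open.'' There is no proof of this statement in the paper to compare your proposal against; the paper merely cites it as motivation and then lists partial results (Theorem~\ref{thm:31choosableconditions}) that confirm the conjecture under additional hypotheses such as forbidding $3$-cycles, $4$-cycles, or both $5$- and $6$-cycles.

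Your proposal is not a proof but a research outline, and you yourself identify the genuine gap: you do not have a complete catalogue of reducible configurations, and you acknowledge that ``the reason this conjecture has remained open'' is exactly this missing piece. The discharging framework you sketch is standard and is indeed the approach used for the partial results cited in the paper, but the whole difficulty of the conjecture lies in the step you wave past --- producing a finite set of configurations whose unavoidability can be verified by discharging and whose reducibility can be established using only the $|L(u)\cap L(v)|\le 1$ condition. Your remarks about Kempe-type chains and ``long alternating walks across several faces'' are suggestive but not concrete enough to constitute progress; the separation condition does give leverage, but no one has yet shown how to convert it into a complete unavoidable set for lists of size~$3$. Until you can exhibit specific configurations and verify both unavoidability and reducibility, what you have is a plan of attack rather than a proof.
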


\begin{theorem}\label{thm:31choosableconditions}
A planar graph $G$ is $(3,1)$-choosable if $G$ avoids any of the following structures:
\begin{cit}
\item 3-cycles (Kratochv\'{i}l, Tuza, Voigt \cite{krat}). 
\item 4-cycles (Choi, Lidick\'y, Stolee \cite{choi}).
\item 5-cycles and 6-cycles (Choi, Lidick\'y, Stolee \cite{choi}).
\end{cit}
\end{theorem}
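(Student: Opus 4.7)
The plan is to treat each of the three items separately, since they have distinct published proofs and the forbidden cycles force different local structure. All three follow the standard framework for planar choosability results: assume a minimum counterexample $G$ equipped with a $(3,1)$-list assignment $L$ admitting no $L$-coloring and minimizing $|V(G)|+|E(G)|$; establish a list of \emph{reducible configurations} that cannot appear in $G$; then apply a discharging argument based on Euler's formula to show one such configuration must appear after all, yielding a contradiction.

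For Item~(a), triangle-freeness gives Euler-based total charge $\sum_{v}(d(v)-4)+\sum_{f}(\ell(f)-4)=-8$ with every face of length at least $4$. Standard minimum-counterexample arguments show $\delta(G)\ge 3$. The key list-coloring observation, specific to the $(3,1)$-setting, is that an already-colored neighbor $u$ of an uncolored vertex $v$ forbids at most one color from $L(v)$: either $\phi(u)\in L(v)$ (one color lost) or $\phi(u)\notin L(v)$ (none lost), because $|L(u)\cap L(v)|\le 1$. Consequently, any vertex with two previously-colored neighbors keeps at least one usable color. This makes short paths of $3$-vertices, pairs of $3$-vertices on a $4$-face, and similar small substructures reducible; one then designs discharging rules sending charge from $5^{+}$-faces and from vertices of degree $\ge 4$ to the $3$-vertices so that every non-reducible local configuration ends at nonnegative charge.

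For Item~(b) the difficulty is that triangles are allowed, so triangular faces contribute negative charge and the reducible configurations must be richer; the discharging scheme has to route charge carefully between triangles and high-degree vertices while still exploiting the $(3,1)$-separation at degree-$3$ vertices. For Item~(c) the absence of both $5$- and $6$-cycles gives strong ``separation'' between short faces in the plane, and a discharging scheme that transfers charge across these gaps, combined with similar reducibility lemmas, closes the argument.

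The main obstacle in each case is the reducibility step. The $(3,1)$-hypothesis is a double-edged sword: it tightens the available lists at the interface between the configuration $H$ and the colored portion $G-V(H)$, but it also restricts which partial colorings we are allowed to assume, because any list modification made when restricting to $G-V(H)$ must still yield a $(3,1)$-assignment. Tracking the precise pattern of list intersections along the boundary of $H$, and doing the case analysis that shows an $L$-coloring of $G-V(H)$ always extends through $H$, is where essentially all the technical work lives; designing a discharging scheme whose rules exactly match the resulting list of reducible configurations is then a careful but more mechanical exercise.
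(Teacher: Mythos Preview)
The paper does not prove this theorem. Theorem~\ref{thm:31choosableconditions} is stated purely as a survey of known results, with each item attributed to the cited references (Kratochv\'{i}l--Tuza--Voigt and Choi--Lidick\'y--Stolee); no proof or sketch appears anywhere in the paper. The paper's own contributions concern $(4,2)$-choosability (Theorem~\ref{thm:42choosableconditions}), and Theorem~\ref{thm:31choosableconditions} serves only as contextual background. So there is no ``paper's own proof'' against which to compare your proposal.

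As for the proposal itself: what you have written is a plausible high-level narrative of how such results are typically obtained, but it is not a proof and contains no checkable content. In particular, you never name a single concrete reducible configuration, never state a discharging rule, and never verify that any rule leaves all elements with nonnegative charge. Your one explicit technical observation---that a colored neighbor forbids at most one color from $L(v)$---is true for \emph{any} list assignment, not just $(3,1)$-assignments, so it does not isolate what the separation hypothesis actually buys you. The genuine leverage of the $(3,1)$ condition is more delicate (for instance, it constrains how colors can repeat along the boundary of a configuration, which is what makes certain configurations reducible that would not be reducible for ordinary $3$-choosability), and none of that is present here. If you intend to reproduce the proofs, you will need to consult the cited papers for the actual lists of reducible configurations and the matching discharging rules; the outline as written would not allow a reader to reconstruct any of the three arguments.
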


In this paper, we focus on 4-choosability with separation.  
Kratochv\'{i}l, Tuza, and Voigt~\cite{krat} proved that all planar graphs are $(4,1)$-choosable, while Voigt \cite{voigt1993list} demonstrated that there exist planar graphs that are not $(4,3)$-choosable. 
It is not known if all planar graphs are $(4,2)$-choosable.  

\begin{conjecture}[Kratochv\'{i}l, \emph{et al.} \cite{krat}]\label{conj:42choosable}
If $G$ is a planar graph, then $G$ is $(4,2)$-choosable.
\end{conjecture}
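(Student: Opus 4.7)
The plan is to attempt a proof by contradiction using the reducible-configurations-and-discharging framework that powers the results collected in Theorem~\ref{thm:31choosableconditions}. Suppose, for contradiction, that some planar graph admits a $(4,2)$-list assignment with no $L$-coloring, and let $(G,L)$ be a counterexample with $|V(G)|+|E(G)|$ minimum. Standard minimality reductions would first establish that $G$ is $2$-connected and has minimum degree at least $4$: if $v$ has degree at most $3$, then $G-v$ admits an $L$-coloring by minimality, and extension to $v$ leaves at least one free color in $L(v)$ because its three or fewer neighbors contribute at most three forbidden colors.

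The heart of the argument would be a collection of reducible configurations, that is, small subgraphs that cannot appear in $G$. The separation condition $|L(u)\cap L(v)|\le 2$ is the key lever: when extending a partial $L$-coloring to an uncolored vertex $v$, each previously colored neighbor contributes at most one forbidden color, and moreover this color must lie in $L(v)\cap L(u)$, a set of size at most $2$. This opens the door to delicate extension arguments in which a handful of neighbors are left uncolored so that several vertices can be colored simultaneously via a bipartite matching (or Hall-type) argument between vertices and common colors. Once a sufficient library of reducible configurations is assembled, one would assign initial charges $\mu(v)=d(v)-4$ to each vertex and $\mu(f)=d(f)-4$ to each face, for a total of $-8$ by Euler's formula, and then design discharging rules so that every vertex and face finishes with nonnegative charge, yielding a contradiction.

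The main obstacle is precisely that this conjecture is longstanding and open: if any routine discharging scheme sufficed, the main body of this paper would not need to restrict to planar graphs avoiding chorded $\ell$-cycles for $\ell\in\{5,6,7\}$. The $(4,2)$ setting sits awkwardly between the known $(4,1)$-choosability of Kratochv\'{i}l, Tuza, and Voigt and the known failure of $(4,3)$-choosability due to Voigt, yet no reducible configuration exploiting $s=2$ in an essentially new way has been found that suffices for the full planar class. I expect the most promising route is to enlarge the library of reducible configurations, quite plausibly with computer-assisted case analysis over the many ways a $(4,2)$-list assignment can constrain an extension, and to refine the discharging scheme at faces of length $5$, $6$, and $7$, which appear to be the decisive bottleneck in every known partial result toward the conjecture.
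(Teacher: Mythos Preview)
Your proposal is not a proof, and appropriately so: the statement is a conjecture, not a theorem, and the paper does not prove it. You correctly identify it as open and sketch the discharging-plus-reducible-configurations paradigm that underlies the paper's partial results in Theorem~\ref{thm:42choosableconditions}. Your diagnosis that faces of length $5$, $6$, and $7$ are the bottleneck aligns with the paper's strategy of forbidding chorded $\ell$-cycles for $\ell\in\{5,6,7\}$ to make the discharging close. There is nothing to correct here, since you are not claiming a proof; your outline of the method and its obstruction is accurate.
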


\begin{theorem}[Kratochv\'{i}l, \emph{et al.} \cite{krat}]\label{thm:41choosable}
If $G$ is a planar graph, then $G$ is $(4,1)$-choosable.
\end{theorem}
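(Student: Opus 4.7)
The plan is a minimum-counterexample argument combined with the discharging method. Suppose for contradiction that there is a planar graph $G$ equipped with a $(4,1)$-list assignment $L$ admitting no $L$-coloring, and choose such a pair $(G,L)$ with $|V(G)|$ minimum. Fix a plane embedding of $G$.

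The first step is to show that a minimum counterexample satisfies $\delta(G)\ge 4$. The key observation is that because $|L(u)\cap L(v)|\le 1$ on every edge $uv$, a single neighbor can forbid at most one color from $L(v)$. Hence if $v$ has degree at most $3$, any $L$-coloring $\phi$ of $G-v$ (which exists by minimality) extends to $v$: at most three colors of $L(v)$ are forbidden, leaving at least one available, contradicting that $G$ is a counterexample.

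The next step is to rule out further small configurations, especially around degree-$4$ vertices. If $v$ has degree $4$ with neighbors $u_1,\ldots,u_4$, then an $L$-coloring $\phi$ of $G-v$ fails to extend only if $|L(u_i)\cap L(v)|=1$ for every $i$ and the shared colors $c_1,\ldots,c_4$ are pairwise distinct, so that $L(v)=\{c_1,\ldots,c_4\}$ and $\phi(u_i)=c_i$. One would then attempt to recolor: for some $u_i$, the set $L(u_i)\setminus\{c_i\}$ contains three colors outside $L(v)$, and one tries to reassign $\phi(u_i)$ to one of these, propagating any induced conflict along a Kempe-type alternating chain built from the unique shared color on each traversed edge. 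Carrying out this analysis yields a collection of reducible configurations involving degree-$4$ vertices and their low-degree neighbors.

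Finally, apply discharging via Euler's formula, which gives total initial charge $\sum_v(\deg(v)-4)+\sum_f(|f|-4)=-8$. Since $\delta(G)\ge 4$, vertices have nonnegative charge; the negative charge lives on triangles (charge $-1$), while $4$-faces are neutral and larger faces are positive. One designs rules shunting charge from large faces and from high-degree vertices to triangles, using the reducible configurations to guarantee that every triangle has a sufficient nearby supply, then checks that every element ends with nonnegative charge, contradicting the total $-8$. The main obstacle I expect is identifying a rich enough family of reducible configurations: the $(4,1)$ separation is much weaker than the no-overlap condition used in ordinary $k$-choosability arguments, so each Kempe-type chain must track exactly one shared color per edge, and verifying termination of the recoloring in the plane requires careful combinatorial bookkeeping against the planar structure.
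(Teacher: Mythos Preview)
The paper does not contain a proof of this statement. Theorem~\ref{thm:41choosable} is quoted from Kratochv\'{i}l, Tuza, and Voigt~\cite{krat} as background and is never proved here; the paper's own contributions concern $(4,2)$-choosability under various forbidden-subgraph hypotheses. So there is no ``paper's own proof'' against which to compare your attempt.

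That said, your proposal is not a proof but a plan, and you are candid about this: the reducibility step is left as ``carrying out this analysis yields a collection of reducible configurations,'' and you flag the identification of that collection as the main obstacle. The $\delta(G)\ge 4$ step is correct and is the easy part. Everything after that is speculative. In particular, the Kempe-type recoloring sketch for a degree-$4$ vertex is not developed to the point where one can see it terminates or interacts usefully with the discharging. As written, the discharging framework (initial charges $d(v)-4$ and $\ell(f)-4$) is standard, but without a concrete list of reducible configurations and explicit rules you have not shown anything beyond $\delta(G)\ge 4$. If you want to complete this, you would need to either consult the original argument in~\cite{krat} or actually produce the reducible configurations and verify the discharging; the present write-up stops before the substantive work begins.
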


Theorem~\ref{thm:41choosable} was strengthened by Kierstead and Lidick\'{y}  \cite{kl}, where it is shown that we can allow an independent set of vertices to have lists of size 3 rather than 4.  

\begin{theorem}[Kierstead and Lidick\'{y} \cite{kl}]\label{thm:41choosablestrengthen}
Let $G$ be a planar graph and $I\subseteq V(G)$ be an independent set.  If $L$ assigns lists of colors to $V(G)$ such that $|L(v)|\geq 3$ for every $v\in I$, and $|L(v)|=4$ for every $v\in V(G)\setminus I$, and $|L(u) \cap L(v)| \leq 1$ for all $uv \in E(G)$, then $G$ has an $L$-coloring.
\end{theorem}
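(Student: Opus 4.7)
The plan is to prove this by strong induction on $|V(G)|$ using a Thomassen-style strengthening of the hypothesis that permits boundary conditions on a designated outer face, mirroring the original Kratochv\'il--Tuza--Voigt proof of Theorem~\ref{thm:41choosable}. Specifically, I would strengthen the statement to allow a short precolored path on the outer face, with non-precolored boundary vertices having lists of size at least $3$ (at least $2$ if in $I$) and interior vertices retaining lists of size at least $4$ (at least $3$ if in $I$), still subject to $|L(u) \cap L(v)| \leq 1$ and compatibility with the precoloring.

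The easy reducible configurations come first. If some $v \in I$ has $\deg_G(v) \leq 2$, or some $v \notin I$ has $\deg_G(v) \leq 3$, then deleting $v$, invoking the inductive hypothesis on $G - v$ with $I \setminus \{v\}$, and greedily extending succeeds, since each neighbor forbids at most one color from $L(v)$. If the outer cycle has a chord $xy$, I split $G$ along $xy$ into two smaller planar graphs, apply induction to each (treating the chord as a length-$1$ precolored path), and glue. Hence I may assume $G$ is $2$-connected, its outer face is an induced cycle, and every degree meets the appropriate lower bound.

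The crux is to peel off one vertex from the outer face. I would choose a boundary vertex $v$ adjacent to the precolored segment, tentatively assign it a color $c \in L(v)$ compatible with the precoloring, delete $v$, and reduce the lists of $v$'s remaining neighbors by removing $c$ from those that happen to contain it. The resulting sub-instance must again satisfy the strengthened hypothesis, which requires a careful choice of $c$: for instance, if a neighbor $u \in I$ of $v$ had $c \in L(u)$ and lies on the outer face, then $u$'s shrunken list must still have size at least $2$, and the intersection bound must remain intact for $u$'s other outer neighbors.

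The main obstacle is the bookkeeping for the independent set $I$ interacting with the boundary conditions. When several $I$-vertices lie on or near the outer face, list shrinkings must simultaneously preserve the various lower bounds on list sizes and the inequality $|L(u) \cap L(v)| \leq 1$ throughout. Getting the precise form of the strengthening right so that every reduction legally applies, without any small base case failing, is the core technical difficulty: too weak a hypothesis and the induction does not close, too strong and a small planar graph provides a counterexample. I expect the actual argument to resemble known proofs in this line, differing mainly in how the boundary list sizes drop from $3$ to $2$ for vertices of $I$ and in the verification that this drop does not cascade unfavorably.
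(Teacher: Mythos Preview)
The paper does not prove this theorem; it is stated with a citation to Kierstead and Lidick\'y~\cite{kl} and then used as background. There is therefore no proof in the paper to compare your proposal against.

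That said, your outline is a reasonable sketch of how such extension-style arguments go, but it is not yet a proof. The real work lies precisely in formulating the strengthened inductive hypothesis so that (i) the base cases hold, (ii) the reduction along a chord preserves the hypothesis on both sides, and (iii) peeling a boundary vertex and shrinking lists keeps all size and intersection constraints valid---and you have explicitly flagged this as the part you have not done. In particular, the interaction between the separation condition $|L(u)\cap L(v)|\le 1$ and the drop from size~$3$ to size~$2$ for boundary vertices in $I$ is delicate: removing a color from a list can destroy the separation bound relative to a neighbor, and you have not indicated how to control this. Until the strengthened statement is written down and each reduction is checked against it, the argument remains a plan rather than a proof.
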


In addition to the work summarized above, there are several results regarding 4-choosability. 
A graph is \emph{$k$-degenerate} if each of its subgraphs has a vertex of degree at most $k$.
Euler's formula implies a planar graph with no 3-cycles is 3-degenerate and hence 4-choosable.
This and other similar results are listed below in Theorem \ref{thm:4choosableconditions}.  
For the last result in Theorem \ref{thm:4choosableconditions}, note that a \emph{chorded $\ell$-cycle} is an $\ell$-cycle with an additional edge connecting two of its non-consecutive vertices.

\begin{theorem}\label{thm:4choosableconditions}
A planar graph $G$ is 4-choosable if $G$ avoids any of the following structures:
\begin{cit}
\item 3-cycles (folklore).
\item 4-cycles (Lam, Xu, Liu, \cite{lam}).
\item 5-cycles (Wang and Lih \cite{wang}).
\item 6-cycles (Fijavz, Juvan, Mohar, and \v{S}krekovski~\cite{fijavz}).
\item 7-cycles (Farzad \cite{farzad}).
\item Chorded 4-cycles and chorded 5-cycles (Borodin and Ivanova \cite{borodin}).
\end{cit}
\end{theorem}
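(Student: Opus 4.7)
The plan is to treat Theorem~\ref{thm:4choosableconditions} as a compilation of six independent statements, each established in a different cited paper, and to outline the common framework rather than reproduce each argument. In the paper itself the ``proof'' is essentially pointers to the six references, so what I would write is a short unified sketch that explains why a reader should expect each item to hold and then defers to the cited works for the detailed case analyses.

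For the first item (3-cycle-free), the argument is folklore and can be given in one line: by Euler's formula a triangle-free planar graph on $n$ vertices has at most $2n-4$ edges, hence minimum degree at most $3$, so the graph is $3$-degenerate and therefore $4$-choosable by greedy coloring in reverse degeneracy order. This is the only item where I would actually include the proof, since it is short and motivates the framework for the others.

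For the remaining five items (no $4$-, $5$-, $6$-, or $7$-cycle, or no chorded $4$- or $5$-cycle), the common strategy is the discharging method applied to a minimum counterexample $G$ with a bad $4$-list assignment $L$. I would organize the sketch around three ingredients: (i) identify a list of \emph{reducible configurations}, i.e.\ local substructures whose presence allows one to delete a few vertices, apply minimality to list-color the rest, and then extend the coloring by re-coloring or by checking that the deleted vertices have enough available colors; (ii) assign initial charges $\mu(v)=\deg(v)-4$ to vertices and $\mu(f)=\ell(f)-4$ to faces, so that $\sum \mu = -8$ by Euler's formula; (iii) design discharging rules that preserve the total charge and, under the hypothesis that every reducible configuration is forbidden and the relevant cycle or chorded cycle is absent, force every vertex and face to end with nonnegative charge, a contradiction.

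The main obstacle in each individual case is precisely the design of reducible configurations that are \emph{both} small enough to be unavoidable (so the discharging succeeds) \emph{and} actually reducible in the $4$-choosable setting (so one can extend a partial list coloring). Choosability is strictly more delicate than ordinary coloring because one cannot permute colors globally, so typical reducibility proofs require checking that, for every choice of lists on the boundary of the removed region, a compatible extension exists; this is the step where the individual papers in the citation list do most of their work, and where a self-contained proof of Theorem~\ref{thm:4choosableconditions} would need to diverge into six separate case analyses that I would simply attribute to the respective references.
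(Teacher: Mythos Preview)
Your proposal is correct and matches the paper's treatment: Theorem~\ref{thm:4choosableconditions} is a survey statement with no proof in the paper beyond the citations, and the folklore item is handled exactly as you describe in the sentence immediately preceding the theorem (``Euler's formula implies a planar graph with no 3-cycles is 3-degenerate and hence 4-choosable''). Your additional sketch of the discharging framework for the remaining items goes beyond what the paper provides but is accurate and appropriate context.
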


Our main results in this paper are listed below in Theorem \ref{thm:42choosableconditions}.  
Note that a \emph{doubly-chorded $\ell$-cycle} is a chorded $\ell$-cycle with an additional edge.

\begin{theorem}\label{thm:42choosableconditions}
A planar graph $G$ is $(4,2)$-choosable if $G$ avoids any of the following structures:
\begin{cit}
\item Chorded 5-cycles.
\item Chorded 6-cycles.
\item Chorded 7-cycles.
\item Doubly-chorded 6-cycles and doubly-chorded 7-cycles.
\end{cit}
\end{theorem}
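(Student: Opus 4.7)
\medskip

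\noindent\textbf{Proof Proposal.} The plan is to treat each of the four forbidden-structure cases separately, but within a uniform framework: for each case, we assume a minimal counterexample $G$ together with a bad $(4,2)$-list assignment $L$ (minimality taken with respect to $|V(G)| + |E(G)|$), embed it in the plane, and derive a contradiction via a discharging argument based on Euler's formula. Minimality immediately forces $\delta(G) \ge 3$, since a vertex $v$ of degree at most $2$ has at least $4 - 2 = 2$ colors in its list forbidden by at most $2$ neighbors in any partial $L$-coloring, so $G - v$ could be $L$-colored and extended. More delicately, minimality also rules out various small local configurations involving $3$-vertices.

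I would first compile the list of reducible configurations. The key tool is list-coloring extensions: identify a subgraph $H$ and a set of ``exit'' constraints such that any $L$-coloring of $G - V(H)$ (restricted suitably) can be extended to $H$ using the $(4,2)$-property. Typical examples: a $3$-vertex whose neighbors have controlled lists; two adjacent $3$-vertices in a short face; a $3$-face sharing an edge with a $3$-face or $4$-face; and various ``light'' triangles whose vertices have small total degree. Because adjacent vertices may share up to $2$ colors in their lists, reducibility arguments here are tighter than in $(4,1)$-choosability, and one must often use parity or counting arguments (e.g., the union of two $4$-lists sharing $\le 2$ colors has size $\ge 6$) to guarantee a free color. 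For each of the four forbidden structures, the absence of certain chorded or doubly-chorded cycles prevents distinct $3$-faces and $4$-faces from sharing too many edges, which is exactly what makes the reducible configurations numerous enough to cover all local patterns.

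Next, I would set up a discharging scheme. Assign initial charge $\mu(v) = d(v) - 4$ to each vertex and $\mu(f) = d(f) - 4$ to each face; then by Euler's formula $\sum \mu = -8$. Using the reducible configurations established above, design redistribution rules so that $3$-vertices receive charge from incident large faces (and/or high-degree neighbors), while $3$-faces receive charge from their boundary vertices or adjacent large faces. The forbidden-structure hypothesis enters crucially at this step: for the chorded-$\ell$-cycle cases, a chorded $\ell$-cycle is precisely what would be created by a $3$-face (or $4$-face) adjacent to an $(\ell-1)$-face (or $\ell$-face), so forbidding chorded $\ell$-cycles controls which small faces can lie near each other. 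For doubly-chorded $6$- and $7$-cycles, the same principle restricts how two chorded small faces can interact. One then verifies that after discharging, every vertex and face has nonnegative final charge, contradicting $\sum \mu = -8$.

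The main obstacle I expect is establishing a sufficiently rich list of reducible configurations for the $(4,2)$ setting. In ordinary $k$-choosability one may freely use Alon--Tarsi or kernel-type arguments, but with the separation constraint one must also track intersections among lists; this makes even small configurations (e.g., a path of three $3$-vertices on the boundary of a $5$-face) surprisingly subtle. I anticipate that the chorded $5$-cycle case will be the tightest, since forbidding a chorded $5$-cycle is the weakest of the hypotheses and thus permits the largest variety of local substructures to survive; more creative extension lemmas (perhaps invoking a strengthening in the spirit of Theorem~\ref{thm:41choosablestrengthen}, but adapted to pairs of overlapping lists) will likely be needed there. The doubly-chorded $6$- and $7$-cycle case, by contrast, is expected to be the most intricate discharging argument, since the hypothesis is weaker than each single-chord hypothesis and must be combined to rule out enough configurations simultaneously.
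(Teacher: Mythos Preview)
Your overall framework—case-by-case minimal counterexample plus discharging on Euler's formula—matches the paper. But two concrete errors would derail the execution.

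First, your minimum-degree bound is too weak. You argue only that $\delta(G)\ge 3$, but in fact $\delta(G)\ge 4$: if $d(v)\le 3$, then after $L$-coloring $G-v$ the vertex $v$ has at most three forbidden colors and $|L(v)|=4$, so the coloring extends. (The separation condition is irrelevant here.) This matters enormously: with $\delta(G)\ge 4$ and the charge $d-4$ you propose, \emph{no vertex} has negative initial charge, so the only negative objects are $3$-faces. Your plan to have ``$3$-vertices receive charge from incident large faces'' is therefore attacking a nonexistent problem, and the real work—pushing charge into clusters of $3$-faces and analyzing which clusters can occur under each forbidden-structure hypothesis—is not in your outline at all. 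The paper's proofs for the $6$- and $7$-cycle cases are organized entirely around a classification of maximal connected blocks of $3$-faces (``clusters''), and showing each cluster receives enough charge.

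Second, you overstate the obstruction to Alon--Tarsi. A configuration that is $f$-choosable (via an Alon--Tarsi orientation or otherwise) is automatically $(f,2)$-choosable, since the latter only restricts the list assignments further. The paper exploits this freely: most of its reducible configurations are certified by Alon--Tarsi orientations, and only a handful need the separation hypothesis in a direct argument. So your anticipated ``main obstacle'' is largely a non-issue. Two difficulties you do \emph{not} anticipate are more serious: for the chorded $5$-cycle case the paper uses a different charge pair ($d(v)-6$ on vertices, $2\ell(f)-6$ on faces) so that the negative objects are $4$- and $5$-vertices rather than $3$-faces; and for the chorded $7$-cycle case the paper proves a strengthened statement allowing a precolored $P_3$ or $K_3$, precisely so that the minimal counterexample has no separating $3$-cycle and hence far fewer cluster types to check. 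Finally, your guess that the doubly-chorded $6$/$7$ case is the most intricate is backwards—it falls out as a short corollary of the chorded $6$-cycle argument.
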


We prove each case of Theorem~\ref{thm:42choosableconditions} separately.
In Section~\ref{sec:5cycles}, we forbid chorded 5-cycles (see Theorem~\ref{thm:5cycles}).
In Section~\ref{sec:6cycles}, we forbid chorded 6-cycles (see Theorem~\ref{thm:cc6}); we use parts of this proof to also prove the case when forbidding doubly-chorded 6-cycles and doubly-chorded 7-cycles (see Corollary~\ref{cor:dcc67}).
In Section~\ref{sec:7cycles}, we forbid chorded 7-cycles (see Theorem~\ref{thm:cc7strengthened}). 
There are many features common to all of these proofs, which we detail in Sections~\ref{sec:overview} and~\ref{sec:reducible}.

\subsection{Preliminaries and Notation}

Refer to \cite{west} for standard graph theory terminology and notation. 
Let $G$ be a graph with a vertex set $V(G)$ and an edge set $E(G)$; let $n(G) = |V(G)|$.
We use $K_n$, $C_n$, and $P_n$ to denote the complete graph, cycle graph, and path graph, respectively, each on $n$ vertices.
The \emph{open neighborhood} of a vertex, denoted $N(v)$, is the set of vertices adjacent to $v$ in $G$; the \emph{closed neighborhood}, denoted $N[v]$, is the set $N(v) \cup \{v\}$.
The \emph{degree} of a vertex $v$, denoted $d_G(v)$, is the number of vertices adjacent to $v$ in $G$; we write $d(v)$ when the graph $G$ is clear from the context. 
If the degree of a vertex $v$ is $k$, we call $v$ a \emph{$k$-vertex}; if the degree of $v$ is at least $k$, we call $v$ a \emph{$k^{+}$-vertex}.  
The \emph{length} of a face $f$, denoted $\ell(f)$, is the length of the face boundary walk.
If the length of a face $f$ is $k$, we call $f$ a \emph{$k$-face}; if the length of $f$ is at least $k$, we call $f$ a \emph{$k^{+}$-face}. 

\section{Overview of Method}\label{sec:overview}

All of our main results use the discharging method.
We refer the reader to the surveys by Borodin~\cite{BorodinSurvey} and Cranston and West~\cite{CW} for an introduction to discharging, which is a method commonly used to obtain results on planar graphs. 
For real numbers $a_v, a_f, b$, we define initial charge values $\mu(v) = a_vd(v)-b$ for every vertex $v$ and $\nu(f) = a_f\ell(f)-b$ for every face $f$. 
If $a_v > 0$, $a_f >0$ and $2a_v + 2a_f = b > 0$, then Euler's formula implies that $\sum_v \mu(v) + \sum_f \nu(f) = -2b$, and the total charge on the entire graph is negative.
We then define \emph{discharging rules} that describe a method for moving charge value among vertices and faces while conserving the total charge value.
We demonstrate that if $G$ is a ``minimal counterexample'' to our theorem, then every vertex and face ends with nonnegative charge after the discharging process, which is a contradiction.
Intuitively, this process works well when forbidding a structure (such as a short chorded cycle) with low charge.

In Section~\ref{sec:reducible}, we concretely define \emph{reducible configurations}.
Loosely, a reducible configuration is a structure $C$ in a graph $G$ with $(4,2)$-list assignment $L$ where any $L$-coloring of $G-C$ extends to an $L$-coloring of $G$.
If we are looking for a minimal example of a graph that is not $(4,2)$-choosable, then none of these reducible configurations appear in the graph.
We define a large list of configurations, \ref{cycle4}--\ref{lastconf} (see Figure~\ref{fig:configurations}), and prove they are reducible using various generic constructions.
The configurations \ref{cycle4}--\ref{d2} are used when forbidding chorded 6- or 7-cycles, while the configurations  \ref{d11}--\ref{lastconf} are used when forbidding chorded 5-cycles.
The use of different configurations is due to differences in our discharging arguments.

In Section~\ref{sec:5cycles}, we forbid chorded 5-cycles and every 3-face is adjacent to at most one other 3-face.
Moreover, 3-faces are not adjacent to 4-faces.
Thus, our initial charge function in this case guarantees that the only objects with negative initial charge are 4- and 5-vertices.

In Sections~\ref{sec:6cycles} and \ref{sec:7cycles}, we use a different discharging strategy.
Our initial charge values guarantee that the only objects of negative charge are 3-faces.
Thus, our discharging rules are designed to send charge from $5^+$-faces and $4^+$-vertices to 3-faces.
However, as we forbid chorded 6-cycles or chorded 7-cycles, there may be many 3-faces very close to each other.

\input{figures/clusters.tex}

If $G$ is a plane graph and $G^*$ is its dual, then let $F_3$ be the set of 3-faces of $G$ and let $G^*_3$ be the induced subgraph of $G^*$ with vertex set $F_3$.
A \emph{cluster} is a maximal set of 3-faces that are connected in $G^*$, i.e., a connected component of $G^*_3$.
Note that two 3-faces sharing an edge are adjacent in $G^*$, and two 3-faces sharing only a vertex are not adjacent in $G^*$.
See Figure~\ref{fig:clusters} for a list of the clusters with maximum cycle length six and every internal vertex of degree at least four.
In these figures, the outer cycle is not necessarily a facial cycle, any area filled with gray is not a face, and a pair of square vertices represent a single vertex.
Additionally, bold edges describe \emph{separating 3-cycles}, which are cycles in a plane graph whose exterior and interior regions both contain vertices not on the cycle.
These figures are based on the list of clusters used by Farzad \cite{farzad} in the proof that 7-cycle-free planar graphs are 4-choosable.

For $k \in \{1,2\}$, there is exactly one way to arrange $k$ 3-faces in a cluster.
A \emph{triangle} is a cluster containing exactly one 3-face; see (K3).
A \emph{diamond} is a cluster containing exactly two 3-faces; see (K4).
For $k \geq 3$, there are multiple ways to arrange $k$ 3-face in a cluster.
A \emph{$k$-fan} is a cluster of $k$ 3-faces all incident to a common vertex of degree at least $k+1$; see (K5a) and (K6b).
A \emph{$k$-wheel} is a cluster of $k$ 3-faces all incident to a common vertex of degree exactly $k$; see (K5b) and (K6e).
Note that the vertex incident to all faces of a $3$-wheel has degree 3.
A \emph{$k$-strip} is a cluster of $k$ 3-faces $f_1,\dots,f_k$ where the boundaries of the 3-faces are disjoint except that $f_i$ and $f_{i+1}$ share an edge for $i \in \{1,\dots,k-1\}$ and $f_i$ and $f_{i+2}$ share a vertex for $i\in \{1,\dots, k-2\}$; see (K5a) and (K6a).

If $f_1,\dots,f_k$ are the 3-faces in a cluster, then we will prove that the total charge on $f_1,\dots,f_k$ after discharging is nonnegative.
Thus, some of the 3-faces may have negative charge, but this is balanced by other 3-faces in the cluster having positive charge.
Hence, our proofs end with a list of all possible cluster types and verifying that each has nonnegative total charge.

While there are 23 total clusters that avoid chorded 7-cycles, we do not have that many cases to check.
The clusters (K5c) and (K6g)--(K6r) have three bold edges, demonstrating a separating 3-cycle.
We avoid checking these cases by using a strengthened coloring statement (see Theorem~\ref{thm:cc7strengthened}) that allows our minimal counterexample to not contain any separating 3-cycles.

\section{Reducible Configurations}\label{sec:reducible}

\input{figures/configurations.tex}

In this section, we describe structures that cannot appear in a minimal counterexample to Theorem~\ref{thm:42choosableconditions}.
Let $G$ be a graph, $f : V(G) \to \mathbb{N}$, and $s$ be a nonnegative integer.
A graph is \emph{$f$-choosable} if $G$ is $L$-choosable for every list assignment $L$ where $|L(v)| \geq f(v)$.
An \emph{$(f,s)$-list-assignment} is a list assignment $L$ on $G$ such that $|L(v)| \geq f(v)$ for all $v \in V(G)$, $|L(v) \cap L(u)| \leq s$ for all edges $uv \in E(G)$, and $L(u) \cap L(v) = \emptyset$ if $uv \in E(G)$ and $f(u) = f(v) = 1$.
A graph $G$ is \emph{$(f,s)$-choosable} if $G$ is $L$-colorable for every $(f,s)$-list-assignment $L$.

\begin{definition}
A \emph{configuration} is a triple $(C, X, \ex)$ where $C$ is a plane graph, $X \subseteq V(C)$, and $\ex : V(C) \to \{ 0, 1, 2, \infty \}$ is an \emph{external degree} function.
A graph $G$ \emph{contains} the configuration $(C,X, \ex)$ if $C$ appears as an induced subgraph $C'$ of $G$, and for each vertex $v \in V(C)$, there are at most $\ex(v)$ edges in $G$ from the copy of $v$ to vertices not in $C'$.
For a triple $(C,X,\ex)$, define the \emph{list-size function} $f : V(C) \to {\mathbb N}$ as \[f(v) = \begin{cases} 4 - \ex(v) & v \in X\\ 1 & v \notin X\end{cases}.\]
A configuration $(C,X,\ex)$ is \emph{reducible} if $C$ is $(f,2)$-choosable.
\end{definition}

Note that if a graph $G$ with $(4,2)$-list assignment $L$ contains a copy of a reducible configuration $(C,X,\ex)$ and $G - X$ is $L$-choosable, then $G$ is $L$-choosable.

First, we note that if $(C,X, \ex)$ is a reducible configuration, then any way to add an edge between distinct vertices of $X$ and lower their external degree by one results in another reducible configuration.

\begin{lemma}\label{lma:iteration}
Let $(C, X, \ex)$ be a reducible configuration, and suppose that $x, y \in X$ are nonadjacent vertices with $\ex(x), \ex(y) \geq 1$.
Let $(C', X', \ex')$ be the configuration where $C' = C + xy$, $X' = X$, and $\ex'(v) = \begin{cases} \ex(v) & v \notin\{x,y\}\\ \ex(v) - 1 & v \in \{x,y\},\end{cases}$.
Then the configuration $(C', X', \ex')$ is reducible.
\end{lemma}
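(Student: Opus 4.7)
The plan is to reduce any $(f', 2)$-coloring problem on $C' = C + xy$ to an $(f, 2)$-coloring problem on the original $C$, leveraging the two extra colors available at $x$ and $y$ (since $f'(x) = f(x) + 1$ and $f'(y) = f(y) + 1$) together with the new separation constraint $|L'(x) \cap L'(y)| \leq 2$ that comes from the edge $xy$ in $C'$.

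Given any $(f',2)$-list-assignment $L'$ on $C'$, I would construct an $(f,2)$-list-assignment $L$ on $C$ by keeping $L(v) = L'(v)$ for all $v \notin \{x,y\}$ and shrinking the lists at $x$ and $y$ to disjoint subsets. Setting $A = L'(x) \cap L'(y)$, the edge $xy \in E(C')$ forces $|A| \leq 2$, so I can partition $A = A_x \sqcup A_y$ with $|A_x|, |A_y| \leq 1$ and define $L(x) = L'(x) \setminus A_x$ and $L(y) = L'(y) \setminus A_y$. Each of these lists loses at most one element, which matches the slack $f'-f$, giving $|L(x)| \geq f(x)$ and $|L(y)| \geq f(y)$; and by construction $L(x) \cap L(y) \subseteq A \setminus (A_x \cup A_y) = \emptyset$.

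Next I would verify that $L$ is in fact an $(f,2)$-list-assignment on $C$. For any edge $uv$ of $C$, the containment $L(u) \subseteq L'(u)$ and $L(v) \subseteq L'(v)$ gives $|L(u) \cap L(v)| \leq |L'(u) \cap L'(v)| \leq 2$. The disjointness requirement for edges $uv$ with $f(u) = f(v) = 1$ carries over from $L'$: such vertices lie outside $X$, so $L$ and $L'$ agree on them, $f$ and $f'$ agree on them, and $uv \in E(C')$ already forces $L'(u) \cap L'(v) = \emptyset$.

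Since $(C, X, \ex)$ is reducible by hypothesis, $C$ admits an $L$-coloring $\phi$. Because $L(v) \subseteq L'(v)$, the coloring $\phi$ uses colors from $L'$, and because $L(x) \cap L(y) = \emptyset$, we automatically get $\phi(x) \neq \phi(y)$, so $\phi$ properly $L'$-colors the new edge $xy$ as well. Thus $\phi$ is an $L'$-coloring of $C'$, so $(C', X', \ex')$ is reducible. The only subtle point is the disjointness step: removing a single element from each of $L'(x)$ and $L'(y)$ suffices because $|A| \leq 2$, which is precisely the separation strength granted by the new edge $xy$; everything else is bookkeeping from the definitions.
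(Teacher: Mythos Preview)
Your proof is correct and follows essentially the same approach as the paper: both arguments shrink $L'(x)$ and $L'(y)$ by at most one color each so that the resulting lists are disjoint, invoke reducibility of $(C,X,\ex)$ to obtain an $L$-coloring, and then observe that disjointness at $x,y$ makes this coloring proper on the new edge $xy$. Your version is in fact slightly more careful than the paper's in explicitly verifying the $(f,2)$-list-assignment conditions on $C$.
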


\begin{proof}
Let $f$ be the list-size function for $C$ and note that $C$ is $(f,2)$-choosable.
Similarly let $f'$ be the list-size function on the configuration $(C',X',\ex')$, and let $L'$ be an $(f',2)$-list assignment on $V(C')$.
Note that $f'(x) = f(x) + 1$ and $f'(y) = f(y)+1$.
Let $S = L'(x) \cap L'(y)$.
If $|S| < 2$, then add at most one element from each of $L'(x)$ and $L'(y)$ to $S$ until $|S| = 2$.
Now let $S = \{a, b\}$ such that $a \in L'(x)$ and $b\in L'(y)$, and define a list assignment $L$ on $C$ by removing $a$ from $L'(x)$ and removing $b$ from $L'(y)$.
Observe that $L$ is an $(f,2)$-list assignment and hence there exists an $L$-coloring of $C$.
Since $L(x) \cap L(y) = \emptyset$, this proper $L$-coloring of $C$ is also an $L'$-coloring of $C'$.
\end{proof}

We will use Lemma~\ref{lma:iteration} implicitly by assuming that $C[X]$ appears as an induced subgraph in our minimal counterexample $G$.

\subsection{Reducibility Proofs}

In this section, we prove that configurations \ref{cycle4}--\ref{lastconf} shown in Figure \ref{fig:configurations}  are reducible. 

\subsubsection{Alon-Tarsi Theorem}

We will use the celebrated Alon-Tarsi Theorem~\cite{alon} to quickly prove that many of our configurations are reducible.
In fact, configurations that are demonstrated in this way are reducible for 4-choosability, not just $(4,2)$-choosability.

A digraph $D$ is an \emph{orientation} of a graph $G$ if $G$ is the underlying undirected graph of $D$ and $D$ has no 2-cycles; let $d_D^+(v)$ and $d_D^-(v)$ be the out- and in-degree of a vertex $v$ in $D$.
An \emph{Eulerian subgraph} of a digraph $D$ is a subset $S \subseteq E(D)$ such that, for every vertex $v \in V(D)$, the number of outgoing edges of $v$ in $S$ is equal to the number of incoming edges of $v$ in $S$.
Let $EE(D)$ be the number of Eulerian subgraphs of even size and $EO(D)$ be the number of Eulerian subgraphs of odd size.

\input{figures/alontarsiproofs.tex}

\begin{theorem}[Alon-Tarsi Theorem~\cite{alon}]\label{thm:alontarsi}
Let $G$ be a graph and $f : V(G) \to \mathbb{N}$ a function.
Suppose that there exists an orientation $D$ of $G$ such that $d_D^+(v) \leq f(v) -1$ for every vertex $v \in V(G)$ and $EE(D) \neq EO(D)$.
Then $G$ is $f$-choosable.
\end{theorem}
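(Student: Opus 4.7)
My plan is to prove the Alon-Tarsi Theorem via the polynomial method, specifically by applying Alon's Combinatorial Nullstellensatz to the graph polynomial of $G$ with respect to the orientation $D$. To the orientation $D$ I associate the polynomial
\[ P_D(x_1,\dots,x_n) \;=\; \prod_{(u,v)\in E(D)} (x_u - x_v), \]
which has the property that, for any assignment $\phi : V(G)\to \mathbb{N}$, one has $P_D(\phi)\neq 0$ if and only if $\phi$ is a proper coloring of $G$. Thus, to produce an $L$-coloring it suffices to find values $x_v\in L(v)$ with $P_D(x)\neq 0$.

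The key technical step is to compute the coefficient of the monomial $M=\prod_v x_v^{d_D^+(v)}$ in $P_D$. Expanding the product, every term corresponds to a choice, at each directed edge $(u,v)$, between the summands $x_u$ and $-x_v$. Such a choice contributes to $M$ precisely when, at every vertex $v$, the total number of picks yielding a factor of $x_v$ equals $d_D^+(v)$; I will call these \emph{balanced} choices. I would then exhibit a sign-preserving bijection between balanced choices and Eulerian subgraphs of $D$: to a balanced choice I associate the set $S$ consisting of the edges on which the flipped summand $-x_v$ was taken. A direct edge-counting check at each vertex $v$ shows that the number of edges of $S$ leaving $v$ equals the number entering $v$, so $S$ is Eulerian, and the sign contributed by that term is $(-1)^{|S|}$. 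Summing over all balanced choices, the coefficient of $M$ therefore equals $\pm(EE(D)-EO(D))$, which is nonzero by hypothesis.

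With this coefficient computation in hand, I would invoke the Combinatorial Nullstellensatz. The polynomial $P_D$ has total degree $|E(G)| = \sum_v d_D^+(v)$, which equals the degree of $M$, so the hypotheses of the Nullstellensatz apply to the exponent vector $(d_D^+(v))_{v\in V(G)}$. Since $|L(v)| \geq f(v) \geq d_D^+(v)+1$ for every $v$, the Nullstellensatz guarantees values $x_v\in L(v)$ with $P_D(x)\neq 0$, yielding the desired proper $L$-coloring of $G$.

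The hard part will be carefully establishing the bijection between balanced choices and Eulerian subgraphs while tracking the signs correctly. In particular, one must fix a clean encoding that converts the degree constraint defining a balanced choice into the flow-conservation condition defining an Eulerian subgraph, and then verify that the global sign contributed by a balanced choice is exactly $(-1)^{|S|}$, so that the sum over balanced choices reduces to the asserted alternating difference $EE(D)-EO(D)$.
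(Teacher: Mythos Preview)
Your proof is correct and is precisely the standard polynomial-method argument for the Alon--Tarsi Theorem: form the graph polynomial $P_D$, identify the coefficient of $\prod_v x_v^{d_D^+(v)}$ with $EE(D)-EO(D)$ via the bijection between balanced expansion terms and Eulerian subgraphs, and then apply the Combinatorial Nullstellensatz. The degree bookkeeping you outline is exactly right: for the set $S$ of edges on which the ``head'' term $-x_v$ is selected, the exponent of $x_w$ becomes $d_D^+(w)-d_S^+(w)+d_S^-(w)$, so matching the target exponent $d_D^+(w)$ forces $d_S^+(w)=d_S^-(w)$ at every vertex, which is the Eulerian condition, and the sign is $(-1)^{|S|}$.

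There is nothing to compare against in the paper, however: the paper does not prove this theorem. It is quoted as a known result, attributed to Alon and Tarsi, and used only as a black box to certify that certain configurations are reducible by exhibiting suitable orientations (Corollary~\ref{cor:specific} and Figure~\ref{fig:alontarsireducibility}). Your write-up is essentially the original Alon--Tarsi proof in its Nullstellensatz formulation, so it is entirely appropriate, just not something the paper itself undertakes.
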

We call an orientation an \emph{Alon-Tarsi orientation} if it satisfies the hypotheses of Theorem~\ref{thm:alontarsi}.
For a configuration $(C,X,\ex)$ and the associated list-size function $f$, it suffices to demonstrate an Alon-Tarsi orientation of $C$ with respect to $f$.
See Figure~\ref{fig:alontarsireducibility} for a list of Alon-Tarsi orientations of several configurations.

\begin{corollary}\label{cor:specific}
The following configurations have Alon-Tarsi orientations and hence are reducible:
\begin{center}
\ref{cycle4}, \ref{cycle6}, \ref{diamond2}, \ref{4fan}, 
 \ref{d2}, 
 \ref{3paths}, \ref{3pathsB}, 
 \ref{d1}, \ref{d9}, \ref{d7}, \ref{bigneedy}.
\end{center}
\end{corollary}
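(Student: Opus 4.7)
The plan is to verify, for each of the eleven configurations listed, that the orientation displayed in Figure~\ref{fig:alontarsireducibility} is indeed an Alon-Tarsi orientation with respect to the list-size function $f$ associated to $(C,X,\ex)$. By Theorem~\ref{thm:alontarsi}, this immediately yields reducibility. So for each configuration there are two routine checks: first, confirm that every vertex $v$ has out-degree at most $f(v)-1$ in the displayed orientation, and second, show that the numbers of even and odd Eulerian subgraphs differ.

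For the out-degree check, I would go vertex by vertex in each picture, recalling that a white vertex $v \in X$ has $f(v) = 4 - \ex(v)$ (where $\ex(v)$ equals the number of pendant half-edges drawn at $v$) and a black vertex has $f(v) = 1$. Since the black vertices are sinks in every orientation shown (no outgoing arrows), their requirement $d_D^+(v) \le 0$ holds trivially; the work is to count that white vertices $v$ have at most $4 - \ex(v) - 1 = 3 - \ex(v)$ outgoing arrows. For example, in \ref{cycle4} each vertex has out-degree $1$ and $f(v) \ge 2$; in \ref{4fan} the central vertex has out-degree $3$ and $f = 4$; and in \ref{bigneedy} one verifies case-by-case that each vertex's outdegree is at most $3$ minus the number of its external half-edges.

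For the Eulerian subgraph count, I would exploit the fact that each orientation is built from a small collection of directed cycles. For the simplest cases \ref{cycle4} and \ref{cycle6} the orientation itself is a single directed cycle of even length, so $EE(D) = 2$ (empty set and the whole cycle) and $EO(D) = 0$, giving $EE \neq EO$ immediately. For \ref{diamond2}, \ref{4fan}, \ref{d2}, \ref{3paths}, \ref{3pathsB} each Eulerian subgraph decomposes into edge-disjoint directed cycles, and I would enumerate the directed cycles in each digraph (there are only a handful) and then check which combinations are edge-disjoint and what parity the resulting subgraphs have. The denser configurations \ref{d1}, \ref{d9}, \ref{d7} involve more directed cycles (several triangular and pentagonal cycles sharing the middle vertex $MB$), so the enumeration requires more bookkeeping, but the structure remains tractable since each pendant directed pentagon contributes only the choice of being included or excluded.

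The main obstacle will be \ref{bigneedy}, which has $13$ vertices and $17$ edges arranged as a central pentagon with three directed pentagonal ``ears''. I would argue using the fact that each of the three ears is an independent directed pentagon sharing exactly one or two edges with the rest of the graph, so the Eulerian subgraphs of $D$ factor as a product over ears of $\{\text{include},\text{exclude}\}$ choices, together with the choice made on the central pentagon. Computing the parity contributions of each factor (each directed pentagon contributes a $5$-edge cycle of odd parity, while the central pentagon is also odd), one obtains $EE(D) \ne EO(D)$ by a direct generating-function-style parity count. If the factoring argument is too delicate to sell cleanly, I would fall back on a direct case analysis, noting that, since each ear is attached by a cut-edge or a single shared vertex, its contribution is independent of the rest; in either presentation the final inequality $EE(D) \neq EO(D)$ follows, completing the application of Theorem~\ref{thm:alontarsi} for all eleven configurations.
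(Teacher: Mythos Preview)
Your proposal is correct and matches the paper's approach exactly: the paper's entire ``proof'' is Figure~\ref{fig:alontarsireducibility}, which displays the orientations and leaves the routine verification of the out-degree bounds and $EE(D)\neq EO(D)$ to the reader. One small correction to your sketch for \ref{bigneedy}: in the displayed orientation the right-hand pentagon $v_4v_7v_{11}v_{12}v_{13}$ is \emph{not} a directed 5-cycle (the edge $v_4v_7$ is oriented $v_4\to v_7$, so the directed cycles through that side have lengths 3, 6, and 9 via $v_7\to v_0$), so your ``independent ears'' factoring doesn't work as stated; you will indeed need the direct enumeration you mention as a fallback.
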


\subsubsection{Direct Proofs}

In the proofs below, we consider a configuration $(C,X,\ex)$ with list-size function $f$ and assume that an $(f,2)$-list-assignment $L$ is given for $C$.
We will demonstrate that each $C$ is $L$-colorable.
Refer to Figure~\ref{fig:configurations} for drawings of the configurations.

First recall the following fact about list-coloring odd cycles.

\begin{fact}\label{cyctri}
If $L$ is a 2-list assignment of an odd cycle, then there does not exist an $L$-coloring of the cycle if and only if all of the lists are identical. 
\end{fact}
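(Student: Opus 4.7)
The plan is to prove the two directions separately, with the nontrivial content lying in the ``if not all lists are identical'' direction.

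For the easy direction, I would observe that if every vertex of the odd cycle $C = v_1v_2\cdots v_n v_1$ (with $n$ odd) has the same 2-element list $L(v_i) = \{a,b\}$, then any $L$-coloring of $C$ is in particular a proper $2$-coloring of $C$ using the palette $\{a,b\}$. Since odd cycles have chromatic number $3$, no such coloring exists. This settles one implication in a single sentence.

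For the other direction, I would prove the contrapositive: assuming the lists are not all identical, I would construct an $L$-coloring explicitly. Since the lists are not all equal, some edge of the cycle must have endpoints with different lists, so after relabeling I may assume $L(v_1) \neq L(v_2)$. Pick a color $c \in L(v_1) \setminus L(v_2)$ and set $\phi(v_1) = c$. I would then color the remaining vertices in the order $v_n, v_{n-1}, \ldots, v_2$, at each step choosing any color in $L(v_j)$ distinct from the color already used at its already-colored neighbor on the cycle. For $j = n$, I need $\phi(v_n) \neq \phi(v_1) = c$; since $|L(v_n)| = 2$, at least one choice is available. For $n-1 \geq j \geq 3$, I only need $\phi(v_j) \neq \phi(v_{j+1})$, and again $|L(v_j)| = 2$ makes this possible. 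Finally, at $v_2$, the greedy step only needs to avoid $\phi(v_3)$: the other potential conflict, with $\phi(v_1) = c$, is automatic because $c \notin L(v_2)$ by our choice.

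I do not anticipate a real obstacle here: the only subtlety is that the usual greedy argument around a cycle fails at the ``last'' vertex, and the whole point of choosing $c \in L(v_1)\setminus L(v_2)$ is to neutralize that final conflict, so that the greedy sweep closes up cleanly. This is the standard trick for list-coloring even short cycles from $2$-lists, and together with the trivial forward direction it gives the biconditional.
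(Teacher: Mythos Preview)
Your argument is correct and is the standard proof of this well-known fact. The paper itself states this as a \emph{Fact} without proof (it is introduced with ``First recall the following fact about list-coloring odd cycles''), so there is no paper proof to compare against; your write-up would serve perfectly well as the omitted justification.
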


\begin{lemma}
\ref{diamond1} is a reducible configuration.
\end{lemma}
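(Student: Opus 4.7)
The plan is to use that $b$ and $d$, being outside $X$, have $f$-value one and are therefore effectively precolored, while $a$ and $c$ have lists of size three. Writing $L(b) = \{\beta\}$ and $L(d) = \{\delta\}$, I would set $L'(a) = L(a) \setminus \{\beta,\delta\}$ and $L'(c) = L(c) \setminus \{\beta,\delta\}$. Because $b$ is adjacent to both $a$ and $c$, and similarly for $d$, while $bd \notin E(C)$, finding an $L$-coloring of $C$ is equivalent to choosing $\phi(a) \in L'(a)$ and $\phi(c) \in L'(c)$ with $\phi(a) \neq \phi(c)$. Since $|L(a)| = |L(c)| = 3$ and we delete at most two colors from each, both residual lists are nonempty.

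I would then split on the sizes of $L'(a)$ and $L'(c)$. If at least one has size two or more, I would pick a color from the smaller residual list first and then extend by picking any unused color from the larger one; this always succeeds because only one color needs to be avoided. The remaining case, $|L'(a)| = |L'(c)| = 1$, is the only substantive obstacle, and it forces both $\beta$ and $\delta$ to lie in $L(a)$ and in $L(c)$, so in particular $\beta \neq \delta$ and $\{\beta,\delta\} \subseteq L(a) \cap L(c)$.

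This is exactly where the $(f,2)$-separation is used. Since $ac \in E(C)$, the separation hypothesis gives $|L(a) \cap L(c)| \leq 2$, so in fact $L(a) \cap L(c) = \{\beta,\delta\}$. Hence $L'(a)$ and $L'(c)$ are disjoint singletons, and their unique colors are automatically distinct, completing the $L$-coloring. The only delicate step is this last one, where the bound $s = 2$ is tight; for $s = 3$ the argument would break, which is consistent with the fact that $(4,3)$-choosability of planar graphs is known to fail.
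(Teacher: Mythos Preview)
Your proof is correct and follows essentially the same approach as the paper. Both arguments remove the two precolored colors from the lists of the chord vertices, observe that each residual list is nonempty, and handle the case where both residual lists are singletons by invoking the separation bound $|L(a)\cap L(c)|\le 2$ to conclude $L'(a)\cap L'(c)=\emptyset$; the paper's proof is simply terser.
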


\begin{proof}
Let  $v_1,\dots,v_4$ be the vertices of a 4-cycle with chord $v_2v_4$ and let $v_2$ and $v_4$ have external degree 1; the colors $c(v_1)$ and $c(v_3)$ are fixed.
Each of $v_2$ and $v_4$ have at least one color in their lists other than $c(v_1)$ and $c(v_3)$.
Since $|L(v_i)| \geq 3$ for each $i \in \{2,4\}$, either one of these vertices has at least two colors available, or $L(v_2) \cap L(v_4) = \{ c(v_1), c(v_3)\}$.
In either case, we can extend the coloring. 
\end{proof}

For the configurations \ref{K6hconfig1}, \ref{K6hconfig2}, and \ref{K6hconfig3}, label the vertices as in Figure~\ref{fig:k6hlabels}: label the center vertex $v_0$ and the outer vertices $v_1, \dots, v_5$, starting with the vertex directly above $v_0$, moving clockwise.

\begin{figure}[htp]
\centering
\begin{tabular}[h]{V@{\quad}V@{\quad}V}
\begin{tikzpicture}[vtx/.style={shape=coordinate}]
	\node[vtx,label=below:$v_0$] (a) at (0,0) {};
	\node[vtx,label=above:$v_2$] (b) at (18:1) {};
	\node[vtx,label=right:$v_1$] (c) at (90:1) {};
	\node[vtx,label=above:$v_5$] (d) at (162:1) {};
	\node[vtx,label=below left:$v_4$] (e) at (234:1) {};
	\node[vtx,label=below right:$v_3$] (f) at (306:1) {};
	\foreach \p in {a,b,c,d,e,f}
		\fill (\p) circle (2pt);

	\draw (a) -- (b) -- (c) -- (d) -- (e) -- (f) -- (b);
	\draw (c) -- (a) -- (d);
	\draw (e) -- (a) -- (f);

	\draw (b) -- (18:1.25);
	\draw (c) -- (90:1.25);
	\draw (100:1.25) -- (c) -- (80:1.25);
	\draw (d) -- (162:1.25);
	\draw (f) -- (306:1.25);
	\draw (e) -- (234:1.25);
	\draw (224:1.25) -- (e) -- (244:1.25);
\end{tikzpicture} 
&
\begin{tikzpicture}[vtx/.style={shape=coordinate}]
	\node[vtx,label=below:$v_0$] (a) at (0,0) {};
	\node[vtx,label=above:$v_2$] (b) at (18:1) {};
	\node[vtx,label=right:$v_1$] (c) at (90:1) {};
	\node[vtx,label=above:$v_5$] (d) at (162:1) {};
	\node[vtx,label=below left:$v_4$] (e) at (234:1) {};
	\node[vtx,label=below right:$v_3$] (f) at (306:1) {};

	\foreach \p in {a,b,c,d,e,f}
		\fill (\p) circle (2pt);

	\draw (a) -- (b) -- (c) -- (d) -- (e) -- (f) -- (b);
	\draw (c) -- (a) -- (d);
	\draw (e) -- (a) -- (f);

	\draw (18:1.30) -- (b) -- (8:1.25);
	\draw (80:1.25) -- (c) -- (100:1.25);
	\draw (d) -- (162:1.25);
	\draw (f) -- (306:1.25);
	\draw (224:1.25) -- (e) -- (244:1.25);
\end{tikzpicture}
&
\begin{tikzpicture}[vtx/.style={shape=coordinate}]
	\node[vtx,label=below:$v_0$] (a) at (0,0) {};
	\node[vtx,label=above:$v_2$] (b) at (18:1) {};
	\node[vtx,label=right:$v_1$] (c) at (90:1) {};
	\node[vtx,label=above:$v_5$] (d) at (162:1) {};
	\node[vtx,label=below left:$v_4$] (e) at (234:1) {};
	\node[vtx,label=below right:$v_3$] (f) at (306:1) {};

	\foreach \p in {a,b,c,d,e,f}
		\fill (\p) circle (2pt);

	\draw (a) -- (b) -- (c) -- (d) -- (e) -- (f) -- (b);
	\draw (c) -- (a) -- (d);
	\draw (e) -- (a) -- (f);

	\draw (18:1.30) -- (b) -- (8:1.25);
	\draw (80:1.25) -- (c) -- (100:1.25);
	\draw (d) -- (162:1.25);
	\draw (296:1.25) -- (f) -- (316:1.25);
	\draw (e) -- (234:1.25);
\end{tikzpicture}
\\
\ref{K6hconfig1} & \ref{K6hconfig2} &\ref{K6hconfig3}
\end{tabular}
\caption{\label{fig:k6hlabels}Vertex labels for configurations \ref{K6hconfig1}, \ref{K6hconfig2}, and \ref{K6hconfig3}.}
\end{figure}
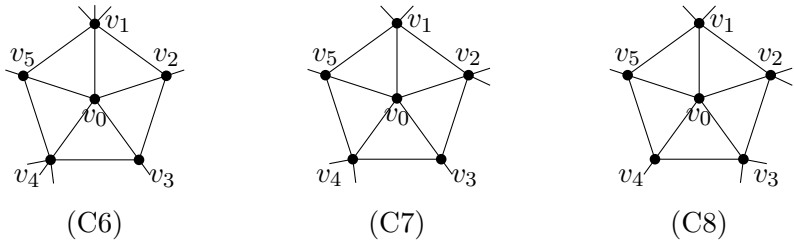

\begin{lemma}
\ref{K6hconfig1} is a reducible configuration.
\end{lemma}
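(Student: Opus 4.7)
The plan is to precolor $v_1$ and $v_4$ with their forced colors $c_1$ and $c_4$ (from their singleton lists), and then extend the coloring to the remaining vertices $X = \{v_0, v_2, v_3, v_5\}$. Define the restricted lists $L'(v_0) := L(v_0) \setminus \{c_1, c_4\}$, $L'(v_2) := L(v_2) \setminus \{c_1\}$, $L'(v_3) := L(v_3) \setminus \{c_4\}$, and $L'(v_5) := L(v_5) \setminus \{c_1, c_4\}$; these have sizes at least $2$, $2$, $2$, and $1$, respectively. The subgraph induced on $X$ is a triangle $v_0 v_2 v_3$ together with a pendant $v_5$ at $v_0$, so the task reduces to $L'$-coloring this triangle-plus-pendant.

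If $|L'(v_5)| \geq 2$, it suffices to $L'$-color the triangle and then pick $c(v_5) \in L'(v_5) \setminus \{c(v_0)\}$. To color the triangle, note that when $|L'(v_0)| \geq 3$ we first color $v_2$ and $v_3$ distinctly from the edge $v_2 v_3$ with $2$-lists, and then pick $c(v_0)$ from the at least $|L'(v_0)| - 2 \geq 1$ remaining colors. When $|L'(v_0)| = 2$, Fact~\ref{cyctri} says the triangle is $L'$-colorable unless all three lists coincide as a common $2$-set. But $|L'(v_0)| = 2$ forces $\{c_1, c_4\} \subseteq L(v_0)$ and hence $L(v_0) = L'(v_0) \cup \{c_1, c_4\}$; if additionally $|L'(v_2)| = 2$ then $L(v_2) = L'(v_2) \cup \{c_1\}$. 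Assuming $L'(v_0) = L'(v_2)$ would give $|L(v_0) \cap L(v_2)| \geq 3$, violating the $(4,2)$-list property on $v_0 v_2$. Hence $L'(v_0) \neq L'(v_2)$ and the triangle is colorable.

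If $|L'(v_5)| = 1$, then $c_1 \neq c_4$ and $L(v_5) = \{c_1, c_4, d\}$ for some $d \neq c_1, c_4$. I set $c(v_5) := d$ and work with $L''(v_0) := L(v_0) \setminus \{c_1, c_4, d\}$; since $|L(v_0) \cap L(v_5)| \leq 2$, we have $|L''(v_0)| \geq 2$. I repeat the triangle-coloring argument with $L''(v_0)$ in place of $L'(v_0)$. The only concern is that $L''(v_0) = L'(v_2) = L'(v_3) = \{a, b\}$, in which case $L(v_0) = \{a, b\} \cup T$ for some two-element $T \subseteq \{c_1, c_4, d\}$. Enumerating the three possibilities for $T$: if $T = \{c_1, c_4\}$ or $T = \{c_1, d\}$, then $L(v_2) = \{a, b, c_1\}$ gives $|L(v_0) \cap L(v_2)| \geq 3$; if $T = \{c_4, d\}$, then $L(v_3) = \{a, b, c_4\}$ gives $|L(v_0) \cap L(v_3)| \geq 3$. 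Each contradicts the $(4,2)$-property, so the triangle is $L'$-colorable, finishing the proof.

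The main obstacle is the careful bookkeeping of the $(4,2)$-list intersections along the edges incident to $v_0$, which is exactly what rules out the obstruction of all three triangle lists being identical; once that is in hand, the coloring extension is a routine application of Fact~\ref{cyctri} and a greedy step for $v_5$.
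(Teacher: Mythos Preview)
Your proof is correct and follows essentially the same approach as the paper: reduce to $L'$-coloring the triangle $v_0v_2v_3$ after dealing with $v_5$, and rule out the unique obstruction (all three restricted lists equal to a common $2$-set) using the $(4,2)$-bound on $|L(v_0)\cap L(v_2)|$ or $|L(v_0)\cap L(v_3)|$. The paper organizes the cases slightly differently---it always chooses $c(v_5)$ first, picking it outside $L(v_0)$ when $c(v_1),c(v_4)\in L(v_0)$, whereas you split on $|L'(v_5)|$ and sometimes color the triangle before $v_5$---but the substance of the argument is the same, and your enumeration of the three possibilities for $T$ in the $|L'(v_5)|=1$ case is a clean way to handle what the paper leaves implicit.
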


\begin{proof}
The colors $c(v_1)$ and $c(v_4)$ are determined.
If $c(v_1)$ and $c(v_4)$ are both in $L(v_0)$, then select $c(v_5)$ from $L(v_5) \setminus \left(L(v_0) \cup \{c(v_1),c(v_4)\}\right)$; otherwise, select $c(v_5) \in L(v_5) \setminus \{ c(v_1), c(v_4)\}$ arbitrarily. 
Define $L'(v_0) = L(v_0) \setminus \{ c(v_1), c(v_4), c(v_5)\}$, $L'(v_2) = L(v_2) \setminus \{c(v_1)\}$, and $L'(v_3) = L(v_3)\setminus \{c(v_4)\}$ and note that $|L'(v_i)| \geq 2$ for all $i \in \{0,2,3\}$.
If $|L'(v_0)| = |L'(v_2)| = 2$, then $L'(v_0) \neq L'(v_2)$, so the 3-cycle $v_0v_2v_3$ has an $L'$-coloring by Fact~\ref{cyctri}. 
\end{proof}

\begin{lemma}
\ref{K6hconfig2} is a reducible configuration.
\end{lemma}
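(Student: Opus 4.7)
The plan is to mirror the preceding proof of configuration~\ref{K6hconfig1} by first choosing colors for $v_1$ and $v_4$ from their (size-$2$) lists, handling one exceptional case separately. Label the vertices as in Figure~\ref{fig:k6hlabels}, so $|L(v_0)|\ge 4$, $|L(v_3)|,|L(v_5)|\ge 3$, and $|L(v_1)|,|L(v_2)|,|L(v_4)|\ge 2$.

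After fixing $c(v_1)\in L(v_1)$ and $c(v_4)\in L(v_4)$, the hypotheses used in the preceding argument reduce to $|L(v_2)\setminus\{c(v_1)\}|\ge 2$ and $|L(v_3)\setminus\{c(v_4)\}|\ge 2$. The second bound is automatic since $|L(v_3)|\ge 3$; the first fails only when $|L(v_2)|=2$ and $c(v_1)\in L(v_2)$, which by separation and $|L(v_1)|\ge 2$ forces $L(v_1)=L(v_2)$, both of size $2$. So I split on this condition.

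\textbf{Case 1} ($L(v_1)\not\subseteq L(v_2)$). Pick $c(v_1)\in L(v_1)\setminus L(v_2)$ and $c(v_4)\in L(v_4)$ arbitrarily. The residual list sizes then match those produced by the pre-coloring step of the preceding lemma, so the same choice of $c(v_5)$ (from $L(v_5)\setminus(L(v_0)\cup\{c(v_1),c(v_4)\})$ if $c(v_1),c(v_4)\in L(v_0)$, and otherwise from $L(v_5)\setminus\{c(v_1),c(v_4)\}$) and subsequent application of Fact~\ref{cyctri} to the triangle $v_0v_2v_3$ finishes the coloring. The only new check is that the three lists on the triangle cannot all coincide; this follows from separation, because $L'(v_0)=L'(v_2)=L'(v_3)$ would force $L(v_3)=L(v_2)\cup\{c(v_4)\}$ and $L(v_2)\cup\{c(v_4)\}\subseteq L(v_0)$, giving $|L(v_0)\cap L(v_3)|\ge 3$.

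\textbf{Case 2} ($L(v_1)=L(v_2)=\{a,b\}$). Set $c(v_1)=a$ and $c(v_2)=b$. What remains is the subgraph on $\{v_0,v_3,v_4,v_5\}$ consisting of two triangles $v_0v_3v_4$ and $v_0v_4v_5$ glued along the edge $v_0v_4$, with reduced lists
\[
L'(v_0)=L(v_0)\setminus\{a,b\},\ L'(v_3)=L(v_3)\setminus\{b\},\ L'(v_4)=L(v_4),\ L'(v_5)=L(v_5)\setminus\{a\},
\]
each of size at least $2$. I first color the triangle $v_0v_3v_4$ using Fact~\ref{cyctri}: the three lists cannot all coincide, since $L'(v_0)=L'(v_3)=L'(v_4)=\{x,y\}$ would force $L(v_0)=\{a,b,x,y\}$ and $L(v_3)=\{b,x,y\}$, making $|L(v_0)\cap L(v_3)|\ge 3$. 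Then I extend to $v_5$: if $|L'(v_5)|\ge 3$ the extension is immediate, and if $L'(v_5)=\{p,q\}$ has size $2$, separation again precludes $L'(v_0)=L'(v_4)=\{p,q\}$ (which would give $L(v_5)=\{a,p,q\}$ and $|L(v_0)\cap L(v_5)|=3$), so the triangle can be colored with at least one of $c(v_0),c(v_4)$ outside $\{p,q\}$, leaving a valid color for $v_5$.

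The main obstacle is the final extension step in Case~2: after the triangle is colored I must confirm $L'(v_5)\setminus\{c(v_0),c(v_4)\}\neq\emptyset$. The separation argument above only pins down that one of $L'(v_0),L'(v_4)$ differs from $\{p,q\}$, so a short direct check on the (very restricted) $2$-element list structure is needed to exhibit a specific triangle coloring with the required property.
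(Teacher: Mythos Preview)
Your Case~1 has a genuine gap. The assertion that $L'(v_0)=L'(v_2)=L'(v_3)$ forces $L(v_2)\cup\{c(v_4)\}\subseteq L(v_0)$ is unjustified: you get $L(v_2)=\{x,y\}\subseteq L(v_0)$, but nothing places $c(v_4)$ in $L(v_0)$. Concretely, take
\[
L(v_0)=\{1,3,4,7\},\ L(v_1)=\{1,2\},\ L(v_2)=\{3,4\},\ L(v_3)=\{3,4,5\},\ L(v_4)=\{5,6\},\ L(v_5)=\{1,5,7\}.
\]
All separation constraints hold, and $L(v_1)\cap L(v_2)=\emptyset$ puts you in Case~1. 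With the admissible choices $c(v_1)=1$ and $c(v_4)=5$ your rule (not both in $L(v_0)$) forces $c(v_5)\in L(v_5)\setminus\{1,5\}=\{7\}$, whence $L'(v_0)=L'(v_2)=L'(v_3)=\{3,4\}$ and the triangle has no $L'$-coloring. The configuration \emph{is} colorable (e.g.\ $(c(v_0),\dots,c(v_5))=(7,2,3,4,5,1)$); your procedure simply fails to find it. The reason the \ref{K6hconfig1} argument survives this situation is that there $|L(v_2)|\ge 3$, so $|L'(v_2)|=2$ forces $c(v_1)\in L(v_2)$, and the contradiction comes through $|L(v_0)\cap L(v_2)|$; with $|L(v_2)|=2$ and $c(v_1)\notin L(v_2)$ that route is closed.

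Your Case~2 is, as you acknowledge, incomplete at the extension to $v_5$: knowing that $L'(v_0),L'(v_4)$ are not both equal to $\{p,q\}$ does not by itself produce a proper coloring of $v_0v_3v_4$ with $\{c(v_0),c(v_4)\}\neq\{p,q\}$, once the constraint from $v_3$ is taken into account.

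The paper's proof uses a different decomposition. It first splits on whether $L(v_1)\cap L(v_4)\neq\emptyset$ (if so, give $v_1,v_4$ a common color and finish greedily). When $L(v_1)\cap L(v_4)=\emptyset$, it next checks whether $L(v_1)\cap L(v_5)=\emptyset$ or $L(v_4)\cap L(v_5)=\emptyset$, reducing in either case to the already-proved configuration \ref{diamond2}. In the remaining case one can choose $c(v_1)\in L(v_1)\setminus L(v_5)$ and $c(v_4)\in L(v_4)\setminus L(v_5)$, so that $v_5$ can safely be colored last after $v_2,v_3,v_0$.
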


\begin{proof}
If there exists a color $a \in L(v_1) \cap L(v_4)$, start by assigning $c(v_1) = c(v_4) = a$; then greedily color the remaining vertices in the following order: $v_2, v_3, v_0, v_5$. 
Otherwise, $L(v_4) \cap L(v_1) = \emptyset$.

Suppose that $L(v_1) \cap L(v_5) = \emptyset$.
Select a color $c(v_4) \in L(v_4)$. 
Considering $v_4$ as an external vertex and ignoring the edge $v_1v_5$, the 4-cycle $v_0v_1v_2v_3$ forms a copy of \ref{diamond2}, which is reducible by Corollary~\ref{cor:specific}.
Thus, there exists an $L$-coloring of $v_0,\dots,v_4$; this coloring extends to $v_5$ since $L(v_1) \cap L(v_5) = \emptyset$.
If $L(v_4) \cap L(v_5) = \emptyset$, then there exists an $L$-coloring by a symmetric argument.

Otherwise, there exist colors $a \in L(v_1) \setminus L(v_5)$ and $b \in L(v_4) \setminus L(v_5)$; assign $c(v_1) = a$ and $c(v_4) = b$.
Select $c(v_2) \in L(v_2) \setminus \{ c(v_1)\}$. 
Define $L'(v_0) = L(v_0) \setminus \{ c(v_1), c(v_2), c(v_4)\}$ and $L'(v_3) = L(v_3) \setminus \{ c(v_2), c(v_4)\}$.
Note that if $|L'(v_0)| =  |L'(v_3)| = 1$, then $L(v_0) \cap L(v_3) = \{ c(v_2), c(v_4)\}$ and hence $L'(v_0) \cap L'(v_3) = \emptyset$.
Thus, the coloring extends by greedily coloring $v_3$, $v_0$, and $v_5$.
\end{proof}

\begin{lemma}
\ref{K6hconfig3} is a reducible configuration.
\end{lemma}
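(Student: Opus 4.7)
The plan is to case on whether $L(v_1)$ and $L(v_3)$ share a color, since $v_1, v_3$ are the non-adjacent pair among the small-list vertices $v_1, v_2, v_3$. If $L(v_1) \cap L(v_3) \neq \emptyset$, I pick a color $a$ in the intersection, assign $c(v_1) = c(v_3) = a$, and then pick $c(v_2) \in L(v_2) \setminus \{a\}$. What remains is the triangle $v_0 v_4 v_5$ with each list reduced to size at least $2$, which Fact~\ref{cyctri} handles unless the three reduced lists are identical. The latter would force $a$ together with two other colors into $L(v_4) \cap L(v_5)$, contradicting the $(4,2)$-separation on the edge $v_4 v_5$.

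Otherwise $L(v_1) \cap L(v_3) = \emptyset$, and I further split on whether $L(v_1) \cap L(v_5) = \emptyset$ or $L(v_3) \cap L(v_4) = \emptyset$. If either holds, one of the cycle edges $v_1 v_5$ or $v_3 v_4$ is effectively free (its endpoints have disjoint lists), so I defer coloring $v_5$ (or $v_4$) and color the remaining $5$-vertex subgraph, which is a hub-plus-$4$-path (hub $v_0$ with path $v_1 v_2 v_3 v_4$) with list sizes $(4, 2, 2, 2, 3)$. This subgraph is $f$-choosable via the Alon--Tarsi orientation making $v_3$ a sink: orient $v_0 \to v_1$, $v_0 \to v_2$, $v_0 \to v_3$, $v_1 \to v_2$, $v_2 \to v_3$, $v_4 \to v_0$, and $v_4 \to v_3$. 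The sink forces every Eulerian subgraph to exclude all arcs into $v_3$, which cascades through $v_2, v_1, v_4$ to force emptiness, so $EE - EO = 1$. The deferred vertex is then colored from its size-$3$ list, which loses at most two colors because the third forbidden neighbor color is not even in its list.

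The residual case is $L(v_1) \cap L(v_3) = \emptyset$ with both $L(v_1) \cap L(v_5)$ and $L(v_3) \cap L(v_4)$ nonempty. From $|L(v_4)| \geq 3$ and $(4,2)$-separation on $v_3 v_4$, the set $L(v_4) \setminus L(v_3)$ is nonempty; symmetrically $L(v_5) \setminus L(v_1) \neq \emptyset$. I would try to pick $c(v_4) \in L(v_4) \setminus L(v_3)$ and $c(v_5) \in L(v_5) \setminus L(v_1)$ with $c(v_4) \neq c(v_5)$, so $v_1, v_3$ keep their full $2$-lists and the remaining fragment is colored routinely. The obstruction is when both sets collapse to the same singleton $\{x\}$, which by a counting argument forces $L(v_4) = L(v_3) \cup \{x\}$ and $L(v_5) = L(v_1) \cup \{x\}$; this rigid structure I would resolve by setting $c(v_4) = x$ (or $c(v_5) = x$) and doing finite case-work on the two choices of $c(v_5) \in L(v_1)$ (which then forces $c(v_1)$) and the two choices of $c(v_3) \in L(v_3)$, leveraging the $(4,2)$-separation bounds on every edge incident to $v_0$ to verify that $L(v_0)$ always retains an admissible color.

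This singleton-collapse subcase of Case~3 is the main obstacle: the lists are essentially pinned down and one must confirm by a bounded finite check that some consistent coloring exists, with the separation constraints on $v_0$'s incident edges being essential for ruling out the worst-case list configurations.
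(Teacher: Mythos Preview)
Your Case~1 (when $L(v_1)\cap L(v_3)\neq\emptyset$) is correct and matches the paper exactly. Your Case~2 is also correct, though more elaborate than needed.

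The real problem is Case~3. Your claim that after choosing $c(v_4)\in L(v_4)\setminus L(v_3)$ and $c(v_5)\in L(v_5)\setminus L(v_1)$ the remaining four-vertex piece ``is colored routinely'' is not justified, and in fact is false for arbitrary such choices. Take $L(v_1)=L(v_2)=\{1,2\}$, $L(v_3)=\{3,4\}$, $L(v_4)=\{3,5,7\}$, $L(v_5)=\{1,6,8\}$, $L(v_0)=\{1,2,5,6\}$ (all separation constraints hold). If you pick $c(v_4)=5$ and $c(v_5)=6$, then $L'(v_0)=\{1,2\}$ and the triangle $v_0v_1v_2$ has identical $2$-lists, so no extension exists. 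A different choice ($c(v_4)=7$) would work, but you give no rule for selecting it, and the singleton-collapse subcase is left as an unspecified finite check. So Case~3 as written is a genuine gap.

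The paper avoids all of this by splitting instead on the intersections $L(v_1)\cap L(v_2)$ and $L(v_2)\cap L(v_3)$. If either is empty, greedily color $v_2,v_3$ (resp.\ $v_2,v_1$); what remains is exactly the diamond configuration~\ref{diamond2}, already known reducible. If both are nonempty and $L(v_1)\cap L(v_3)=\emptyset$, then since $|L(v_2)|=2$ each intersection has size exactly~$1$, so one can pick $c(v_1)\in L(v_1)\setminus L(v_2)$ and $c(v_3)\in L(v_3)\setminus L(v_2)$. Now the triangle $v_0v_4v_5$ gets $2$-lists which cannot all coincide (else $|L(v_0)\cap L(v_4)|\geq 3$), so Fact~\ref{cyctri} applies; finally $v_2$ needs only to avoid $c(v_0)$, since $c(v_1),c(v_3)\notin L(v_2)$ by construction. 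The key move you missed is to use $v_2$ --- the one small-list vertex adjacent to both $v_1$ and $v_3$ --- as the pivot, rather than $v_4,v_5$.
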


\begin{proof}
If $L(v_1) \cap L(v_2) = \emptyset$, then greedily color $v_2$ and $v_3$; what remains is \ref{diamond2} and the coloring extends. 
A similar argument works if $L(v_3) \cap L(v_2) = \emptyset$.

If $L(v_1) \cap L(v_3) = \emptyset$, then $|L(v_1) \cap L(v_2)| = |L(v_3) \cap L(v_2)| = 1$.
Select $c(v_1) \in L(v_1) \setminus L(v_2)$, $c(v_3) \in L(v_3)\setminus L(v_2)$.
Define $L'(v_0) = L(v_0) \setminus \{c(v_1), c(v_3)\}$, $L'(v_4) = L(v_4) \setminus \{ c(v_3) \}$, and $L'(v_5) = L(v_5) \setminus \{c(v_2)\}$.
Observe that we can $L'$-color the 3-cycle $v_0v_4v_5$ by Fact~\ref{cyctri} and then select $c(v_2) \in L(v_2) \setminus \{ c(v_0)\}$.

If there exists a color $a \in L(v_1) \cap L(v_3)$, start by assigning $c(v_1)=c(v_3) = a$ and then assign $c(v_2) \in L(v_2) \setminus \{a\}$.
Define $L'(v_0) = L(v_0) \setminus \{a, c(v_2)\}$, $L'(v_4) = L(v_4) \setminus \{ a\}$, and $L'(v_5) = L(v_5) \setminus \{a\}$.
Observe that the 3-cycle $v_0v_4v_5$ has an $L'$-coloring by Fact~\ref{cyctri}. 
\end{proof}

\begin{lemma}
\ref{d11} is a reducible configuration.
\end{lemma}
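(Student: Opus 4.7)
The plan is to precolor the unique non-$X$ vertex $g$ with its forced color $c := c(g)$, and then extend to the five $X$-vertices on the outer $5$-cycle via Fact~\ref{cyctri}. Label the $5$-cycle $u_1 u_2 u_3 u_4 u_5$ in order so that $u_3$ and $u_4$ are the two vertices adjacent to $g$; then $|L(u_3)| = |L(u_4)| = 3$ (external degree one) and $|L(u_1)| = |L(u_2)| = |L(u_5)| = 2$ (external degree two).

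After precoloring, set $L'(u_i) = L(u_i) \setminus \{c\}$ for $i \in \{3,4\}$ and $L'(u_j) = L(u_j)$ otherwise; every $|L'(u_i)| \geq 2$. To apply Fact~\ref{cyctri}, I need to shrink each list to size exactly $2$ so that the five resulting lists are not all identical. If $|L'(u_3)| = 3$ (or symmetrically $|L'(u_4)| = 3$), I can pick a $2$-subset of $L'(u_3)$ different from $L'(u_2)$; this is possible because $L'(u_3)$ has three distinct $2$-subsets, at most one of which equals $L'(u_2)$. Fact~\ref{cyctri} then produces an $L'$-coloring of the cycle.

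Otherwise $|L'(u_3)| = |L'(u_4)| = 2$, which forces $c \in L(u_3) \cap L(u_4)$. Here I invoke the $(4,2)$-separation on the edge $u_3 u_4$: the constraint $|L(u_3) \cap L(u_4)| \leq 2$ together with $c \in L(u_3) \cap L(u_4)$ yields $|L'(u_3) \cap L'(u_4)| \leq 1$, so $L'(u_3) \neq L'(u_4)$. Again the five $2$-lists are not all identical, and Fact~\ref{cyctri} delivers the coloring. The hard part is precisely this last case: without the separation on the edge $u_3 u_4$ (the one shared between the outer $5$-cycle and the attached triangle at $g$), nothing would prevent the five reduced lists from all collapsing to a common $2$-set and producing an uncolorable odd cycle; the edge separation is exactly what rules this out.
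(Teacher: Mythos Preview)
Your proof is correct and follows essentially the same approach as the paper: remove $c(g)$ from the lists of its two cycle-neighbors, observe that all five cycle-vertices retain lists of size at least two, and use Fact~\ref{cyctri} together with the separation bound $|L(u_3)\cap L(u_4)|\le 2$ on the edge $u_3u_4$ to rule out the possibility that the reduced lists are all the same $2$-set. The paper phrases this as a direct contradiction (if uncolorable then $L(u_3)=L(u_4)$ with $|L(u_3)\cap L(u_4)|=3$), whereas you are more explicit about first shrinking size-$3$ lists down to $2$-subsets before invoking Fact~\ref{cyctri}; your extra care here is warranted since Fact~\ref{cyctri} is stated only for $2$-list assignments.
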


\begin{proof}
Consider the vertex $v$ of arbitrary external degree and let $c(v)$ be the color assigned to $v$.
Let $u_1$ and $u_2$ be the two neighbors of $v$ in the configuration.
If we remove $c(v)$ from the lists on $u_1$ and $u_2$, observe that at least two colors remain in every list for every vertex of the 5-cycle.
If there is no $L$-coloring of the configuration, then Fact~\ref{cyctri} asserts that all lists have size two and contain the same colors; however, this implies that $L(u_1)=L(u_2)$ and $|L(u_1) \cap L(u_2)| = 3$, a contradiction.
\end{proof}

\subsubsection{Template Configurations}

The configurations \ref{d5}--\ref{lastconf} are special cases of general constructions called \emph{template constructions}.

Let $(C,X,\ex)$ be a configuration with vertices $u, v \in X$.
A $uv$-path $P$ is called a \emph{special $uv$-path} if all internal vertices of $P$ have degree two in $C$ and external degree two.
A $uv$-path $P$ is called an \emph{extra-special $uv$-path} if all internal vertices of $P$ have external degree two and degree two in $C$, except for a consecutive pair $xy$ where $\ex(x)=\ex(y)=1$, $d(x) = d(y) = 3$, and there is a vertex $z\notin X$ such that $z$ is a common neighbor to $x$ and $y$, and $z$ is not adjacent to any other vertices in $C$.
Using these special and extra-special paths, we can describe several configurations by the following \emph{templates} (see Figure~\ref{fig:templates}), consisting of

\begin{itemize}
\item (B1) a triangle $uvw,$ where $\ex(u) = \ex(w) = 2$, $\ex(v) = 0$,  an extra-special $uv$-path $P_1$, and a special $vw$-path $P_2$, and

\item (B2) a triangle $vwr$, where $\ex(r) = \infty$, $\ex(w) = 1$, $\ex(v) = 0$, a vertex $u$ adjacent to $v$ where $\ex(u) = 2$, an extra-special $uv$-path $P_1$, and a special $vw$-path $P_2$.

\end{itemize}

\begin{figure}[htp]
\centering
\begin{tabular}[h]{W@{\quad}W@{\quad}W} 
\begin{tikzpicture}[scale=1,vtx/.style={shape=coordinate}]
	\node[vtx,label=above left:$u$] (La) at (0,0) {};
	\node[vtx,label=below left:$P_1$] (Lb) at (180:1) {};
	\node[vtx,label=below right:$x$] (Lc) at ($(Lb) !1! 108:(La)$) {};
	\node[vtx,label=below:$y$] (Ld) at ($(Lc) !1! 108:(Lb)$) {};

	\node[vtx,label=below left:$z$] (g) at ($(Lc) !1! 60:(Ld)$) {};

	\node[vtx,label=right:$v$] (MB) at ($(Ld) !1! 108:(Lc)$) {};
	\node[vtx] (MT) at ($(MB) +(90:1)$) {};

	\node[vtx] (Ra) at ($(MB) !1! -54:(MT)$) {};
	\node[vtx] (Rb) at ($(Ra) !1! 108:(MB)$) {};
	\node[vtx,label=below right:$P_2$] (Rc) at ($(Rb) !1! 108:(Ra)$) {};
	\node[vtx,label=above right:$w$] (Rd) at ($(Rc) !1! 108:(Rb)$) {};

	\fill (MB) circle (2pt);
	\fill (g) circle (2pt);

		\fill (Lc) circle (2pt);
		\fill (La) circle (2pt);
		\fill (Ld) circle (2pt);
		\fill (Rd) circle (2pt);

	\draw[style=dashed] (La) to[out=-144,in=324] (Lb) to[out=144,in=252] (Lc) -- (Ld) to[out=0,in=108] (MB);
	\draw (MB) -- (La);
	\draw (Rd) -- (MB);
	\draw[style=dashed] (MB) to[out=72,in=180] (Ra) to[out=0,in=108] (Rb) to[out=-72,in=36] (Rc) to[out=-144,in=324] (Rd);
	\draw (La) -- (Rd);
	\draw (Ld) -- (g) -- (Lc);

	\draw (La) -- +(-54:0.25) -- (La) -- +(234:0.25);
	\draw (Lc) -- +(180:0.25);
	\draw (Ld) -- +(78:0.25);
	
	\draw (g) -- +(171:0.3) -- (g) -- +(81:0.3);
	\draw (g) -- +(126:0.3);

	\draw (Rd) -- +(-54:0.25) -- (Rd) -- +(234:0.25);
	
	\foreach \p in {MB,La,Lc,Ld,Rd}
		\fill[fill=white] (\p) circle (1.5pt);

\end{tikzpicture}
&
\begin{tikzpicture}[scale=1,vtx/.style={shape=coordinate}]
	\node[vtx,label=below:$y$] (La) at (0,0) {};
	\node[vtx,label=below:$x$] (Lb) at (180:1) {};
	\node[vtx,label=left:$P_1$] (Lc) at ($(Lb) !1! 108:(La)$) {};
	\node[vtx,label=below:$u$] (Ld) at ($(Lc) !1! 108:(Lb)$) {};

	\node[vtx,label=right:$z$] (g) at ($(La) !1! 60:(Lb)$) {};

	\node[vtx,label=right:$v$] (MB) at ($(Ld) !1! 108:(Lc)$) {};
	\node[vtx,label=below left:$r$] (MT) at ($(MB) +(90:1)$) {};

	\node[vtx,label=below:$w$] (Ra) at ($(MB) !1! -54:(MT)$) {};
	\node[vtx,label=right:$P_2$] (Rb) at ($(Ra) !1! 108:(MB)$) {};
	\node[vtx] (Rc) at ($(Rb) !1! 108:(Ra)$) {};
	\node[vtx] (Rd) at ($(Rc) !1! 108:(Rb)$) {};

	\fill (MB) circle (2pt);
	\fill (MT) circle (2pt);
	\fill (g) circle (2pt);

	\fill (La) circle (2pt);
	\fill (Lb) circle (2pt);
	\fill (Ld) circle (2pt);
	\fill (Ra) circle (2pt);

	\draw[style=dashed] (MB) to[out=-100,in=36] (La) -- (Lb) to[out=144,in=252]  (Lc) to[out=72,in=180]   (Ld);
	\draw (Ld) -- (MB);
	\draw[style=dashed] (Ra) to[out=0,in=108] (Rb) to[out=-72,in=36] (Rc) to[out=-144,in=324] (Rd) to[out=144,in=280] (MB);
	\draw (MT) -- (MB) -- (Ra) -- (MT);
	\draw (Lb) -- (g) -- (La);

	\draw (La) -- +(-18:0.25);
	\draw (Lb) -- +(198:0.25);
	\draw (Ld) -- +(126:0.25);
	\draw (Ld) -- +(84:0.25);
	
	\draw (g) -- +(-45:0.25) -- (g) -- +(-135:0.25);
	\draw(g) -- +(-90:0.3);
	\draw (MT) -- +(180:0.3) -- (MT) -- +(135:0.3);
	\draw (MT) -- +(90:0.3);

	\draw (Ra) -- +(54:0.25);
	
	\foreach \p in {MB,La,Lb,Ld,Ra}
		\fill[fill=white] (\p) circle (1.5pt);

\end{tikzpicture} 
\\
(B1)
&
(B2)
\end{tabular}

\vspace{0.5em}
\textit{\footnotesize Dotted lines indicate special paths or extra-special paths. 
Vertices in $X$ are filled with white.}

\vspace{-0.5em}

\caption{\label{fig:templates}Templates for reducible configurations.}
\end{figure}

%
%

We make some basic observations about special and extra-special paths that will be used to prove that these templates correspond to reducible configurations.

Let $P$ be a special $uv$-path or an extra-special $uv$-path.
For every color $a \in L(u)$, let $g_P^u(a)$ be the set containing each color $b \in L(v)$ such that assigning $c(u) = a$ and $c(v) = b$ does not extend to an $L$-coloring of $P$.
Since we can greedily color $P$ starting at $u$ until reaching $v$, there is at most one color in $g_P^u(a)$.
Further, $g_P^u(a) \neq \emptyset$ if and only if this greedy coloring process has exactly one choice for each vertex in $P$.
Thus, if $g_P^u(a) = \{b\}$ then also $g_P^v(b) = \{a\}$.

Since $L$ is an $(f,2)$-list assignment, adjacent vertices have at most two colors in common.
Thus, there are at most two colors $a_1, a_2 \in L(u)$ such that $g_P^u(a_i) \neq \emptyset$.
Moreover, observe that if there are two distinct colors $a_1, a_2 \in L(u)$ such that $g_P^u(a_i) \neq \emptyset$, then both $a_1$ and $a_2$ are in every list along $P$ and hence $\{a_1,a_2\} \subseteq L(v)$. 

If $P$ is an extra-special $uv$-path with 3-cycle $xyz$ where $xy$ is in the path $P$, then after a color is assigned to $z$ (as $\ex(z) = \infty$) either one of $x$ or $y$ has three colors available or $|L(x)\cap L(y)| \leq 1$.
Therefore, if $P$ is an extra-special $uv$-path, then there is at most one color $a \in L(u)$ such that $g_P^u(a) \neq \emptyset$.

\begin{lemma}
All configurations matching the template (B1) are reducible.
\end{lemma}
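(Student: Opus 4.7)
The plan is to color the vertex $z$ first using its sole available color (since $z \notin X$ we have $|L(z)| \ge 1$), then update the lists of $x$ and $y$, which retain size at least two. With $z$ removed, the path $P_1 \setminus z$ behaves essentially like a special $uv$-path, so the observations preceding the lemma apply directly: there is at most one bad color $a^* \in L(u)$ with $g_{P_1}^u(a^*) = \{b^*\}$ for some $b^* \in L(v)$, and there is a set $B_v \subseteq L(v)$ with $|B_v| \le 2$ of bad colors for $P_2$, each with a unique bad partner $c_b \in L(w)$. Since $\ex(u) = \ex(w) = 2$ and $\ex(v) = 0$, we have $|L(u)| = |L(w)| = 2$ and $|L(v)| = 4$.

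My primary strategy is to pick $c(v) \in L(v) \setminus (B_v \cup \{b^*\})$, which eliminates every constraint imposed by $P_1$ and $P_2$. Since $|L(v)| = 4$ and $|B_v \cup \{b^*\}| \le 3$, such a $c(v)$ always exists. Having fixed this $c(v)$, it remains to extend to a proper triangle coloring with $c(u) \in L(u) \setminus \{c(v)\}$ and $c(w) \in L(w) \setminus \{c(u), c(v)\}$. A short counting argument (based on $|L(u)| = |L(w)| = 2$) shows that this extension fails for a particular $c(v)$ only when $c(v) \in L(u) \cap L(w)$ and $L(u) = L(w)$.

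The main obstacle is the residual case where $L(u) = L(w)$ and \emph{every} element of $L(v) \setminus (B_v \cup \{b^*\})$ lies in $L(u)$. To handle this I will use the $(f,2)$-separation bound $|L(u) \cap L(v)| \le 2$, which forces $|L(v) \setminus L(u)| \ge 2$ and hence guarantees at least two colors of $B_v \cup \{b^*\}$ lie outside $L(u)$. Since at most one such color can equal $b^*$, I can select $c(v) \in B_v \setminus (L(u) \cup \{b^*\})$. This choice removes the $P_1$ constraint (as $c(v) \neq b^*$) and removes the triangle constraints with $v$ (as $c(v) \notin L(u) \cup L(w)$), leaving only $c(w) \neq c_{c(v)}$ from $P_2$. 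Choosing $c(u) = c_{c(v)}$ whenever $c_{c(v)} \in L(u)$, and otherwise any $c(u) \in L(u)$, ensures $L(w) \setminus \{c(u), c_{c(v)}\}$ is nonempty and yields a valid $c(w)$.

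The hard part will be the bookkeeping in the obstacle case, carefully verifying that each possible overlap configuration between $\{b^*\}$, $B_v$, $L(u)$, and $L(w)$ still admits a valid choice of $c(v)$ — in particular handling the situation where both elements of $L(u) = L(w)$ appear among $\{b^*\} \cup B_v$ but are hit by the bad partners in the ``wrong'' way. Once the triangle is properly colored and the endpoint conditions $c(v) \notin g^u_{P_1}(c(u))$ and $c(w) \notin g^v_{P_2}(c(v))$ are met, extending greedily along $P_1 \setminus z$ and along $P_2$ succeeds by the definition of the $g^{\cdot}_P$ functions, completing the $L$-coloring of $C$.
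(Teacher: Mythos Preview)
Your argument is correct, but it is considerably more involved than the paper's. The paper exploits the asymmetry between $P_1$ and $P_2$ by coloring $u$ \emph{first}: since $P_1$ is extra-special, at least one of the two colors $a_1,a_2\in L(u)$ has $g_{P_1}^u(a_i)=\emptyset$, so fixing $c(u)=a_i$ makes $P_1$ extend for \emph{any} choice of $c(v)$. Then one picks $c(w)\in L(w)\setminus\{c(u)\}$ and finally $c(v)\in L(v)\setminus\bigl(\{c(u),c(w)\}\cup g_{P_2}^w(c(w))\bigr)$, which removes at most three colors from a list of size four. That is the entire proof.

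By choosing $c(v)$ first, you forfeit this shortcut and must instead track the interaction between the bad sets $\{b^*\}$, $B_v$, and the lists $L(u)=L(w)$, invoking the separation bound $|L(u)\cap L(v)|\le 2$ to survive the obstacle case. Your bookkeeping is sound, and the trick of setting $c(u)=c_{c(v)}$ when $c_{c(v)}\in L(u)$ is the right patch. Note also that you do not actually need to color $z$ by hand: the paper already records the conclusion you need (at most one $a^*\in L(u)$ with $g_{P_1}^u(a^*)\neq\emptyset$) as a direct property of extra-special paths. The takeaway is that when one endpoint has a list with essentially no forbidden choices (here $u$, because $P_1$ is extra-special), coloring that endpoint first usually collapses the case analysis.
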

\begin{proof}
Let $(C, X, \ex)$ be a configuration matching the template (B1) and let $L$ be an $(f,2)$-list assignment.

%
Let $L(u) = \{a_1,a_2\}$. 
Since $P_1$ is an extra-special path, there is at least one $i \in \{1,2\}$ such that $g_{P_1}^u(a_i) = \emptyset$.
Assign $c(u) = a_i$, select $c(w) \in L(w) \setminus \{a_i\}$ and $c(v) \in L(v) \setminus \left(\{c(u), c(w)\} \cup g_{P_1}^w(c(w))\right)$; the coloring extends to $P_1$ and $P_2$.
\end{proof}

\begin{corollary}
The configurations \ref{d5}, \ref{d6}, and \ref{d8} match the template (B1), and hence they are reducible.
\end{corollary}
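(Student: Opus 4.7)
The plan is to prove, for each of \ref{d5}, \ref{d6}, and \ref{d8}, that the configuration admits the data $(u, v, w, P_1, P_2)$ required by template (B1); reducibility then follows immediately from the preceding lemma. In every case there is a unique vertex of external degree $0$, and it must play the role of $v$.

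For \ref{d5} and \ref{d6}, the picture shows a left pentagon meeting a right-hand triangle at a common vertex, an extra edge joining the upper vertex of the triangle to an upper vertex of the pentagon, and an ear consisting of the auxiliary vertex $g \notin X$ attached to two consecutive pentagon vertices. I would take $v$ to be the vertex shared by the pentagon and the triangle, $w$ to be the upper vertex of the triangle, and $u$ to be the pentagon vertex joined to $w$ by the extra edge. Then $u, v, w$ span a triangle and by inspection of the figure $\ex(u) = \ex(w) = 2$ while $\ex(v) = 0$. The special $vw$-path $P_2$ is the two-edge path through the remaining vertex of the right-hand triangle, whose unique internal vertex has degree two in $C$ and external degree two. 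The extra-special $uv$-path $P_1$ is the four-edge arc around the left pentagon from $u$ to $v$; its $xy$ pair of consecutive internal vertices is the pair of pentagon vertices adjacent to $g$ (this pair differs between \ref{d5} and \ref{d6}, but in both cases it is a consecutive pair of internal vertices of $P_1$). Each such $x$ and $y$ has degree three in $C$ and external degree one, the remaining internal vertex of $P_1$ has degree two in $C$ and external degree two, and $g$ has no neighbors in $C$ outside $\{x, y\}$.

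For \ref{d8}, the structure is two pentagons glued at a common vertex together with a chord joining a specific vertex of one pentagon to a specific vertex of the other, plus an ear consisting of the auxiliary vertex $g \notin X$ attached to two consecutive vertices of the left pentagon. I would take $v$ to be the glued vertex and $u, w$ to be the two endpoints of the chord; both have external degree $2$ and together with $v$ form the triangle $uvw$. The special $vw$-path $P_2$ traces the right pentagon from $v$ to $w$ and has three internal vertices, each of degree two in $C$ and external degree two. The extra-special $uv$-path $P_1$ traces the left pentagon from $u$ to $v$; its $xy$ pair is the two pentagon vertices adjacent to $g$, and the remaining internal vertex has degree two in $C$ and external degree two.

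The only real verification step is confirming from the picture that the auxiliary vertex $g$ lies outside $X$ and has no neighbors in $C$ other than the claimed $x$ and $y$. No deeper difficulty arises; the argument is simply pattern-matching the picture against the abstract data prescribed by the template.
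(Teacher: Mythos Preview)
Your proposal is correct and is exactly the verification the paper leaves implicit. The paper states this corollary without proof, so your pattern-matching of each configuration against the template data $(u,v,w,P_1,P_2)$ is precisely what is required; your choices of $u$, $v$, $w$ and the two paths agree with the intended reading of the figures in all three cases.
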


\begin{lemma}
All configurations matching the template (B2) are reducible.
\end{lemma}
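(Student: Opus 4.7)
The plan is to mirror the proof of (B1) just given while handling the extra color constraint introduced by the pre-colored vertex $r$ in the triangle $vwr$. First I fix $c(r)$, the unique color in $L(r)$. Because $P_1$ is extra-special, I can pick $a \in L(u)$ with $g_{P_1}^u(a) = \emptyset$ and set $c(u) = a$; then any $c(v) \in L(v)$ automatically extends along $P_1$. The remaining task is to choose $c(w) \in L(w) \setminus \{c(r)\}$ together with $c(v) \in L(v) \setminus \left(\{a, c(r), c(w)\} \cup g_{P_2}^w(c(w))\right)$, which will also handle $P_2$.

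If $|L(w) \setminus \{c(r)\}| \geq 3$, I note that at most two colors in $L(w)$ have $g_{P_2}^w \neq \emptyset$, so I can choose $c(w)$ avoiding $c(r)$ with $g_{P_2}^w(c(w)) = \emptyset$, leaving $|L(v)| - 3 \geq 1$ admissible options for $c(v)$. The same easy argument settles the case $|L(w) \setminus \{c(r)\}| = 2$ when at least one of its two members has $g_{P_2}^w(\cdot) = \emptyset$.

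The main obstacle is the tight remaining case: $|L(w)| = 3$ with $c(r) \in L(w)$ and both members $b_1, b_2$ of $L(w) \setminus \{c(r)\}$ satisfying $g_{P_2}^w(b_i) = \{d_i\}$. By the general observation on special paths given just before template (B1), having two distinct bad colors in $L(w)$ forces $\{b_1, b_2\} \subseteq L(v)$. Assuming no valid $c(v)$ exists for either $c(w) = b_1$ or $c(w) = b_2$ means the four-element list $L(v)$ must coincide with $\{a, c(r), b_i, d_i\}$ for each $i \in \{1,2\}$; equating these two descriptions and using $b_1 \neq b_2$ forces $d_1 = b_2$ and $d_2 = b_1$, hence $L(v) = \{a, c(r), b_1, b_2\}$.

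This, however, yields $L(v) \cap L(w) \supseteq \{c(r), b_1, b_2\}$, a set of size three, which contradicts the separation condition $|L(v) \cap L(w)| \leq 2$ imposed by the edge $vw$. Consequently at least one $c(w) \in \{b_1, b_2\}$ admits a valid $c(v)$, and the resulting assignment extends to $P_1$ by our choice of $a$ and to $P_2$ by construction of $c(v)$, completing the reducibility proof.
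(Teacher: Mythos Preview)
Your proof is correct. The approach differs from the paper's in its case structure, though both ultimately hinge on the separation condition $|L(v)\cap L(w)|\le 2$.

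The paper splits on whether $c(r)\in L(v)$. If $c(r)\notin L(v)$, any $c(w)\in L(w)\setminus\{c(r)\}$ works, since $c(v)$ then only needs to avoid $\{c(u),c(w)\}\cup g_{P_2}^w(c(w))$. If $c(r)\in L(v)$, the paper picks $c(w)\in L(w)\setminus L(v)$ (which exists by separation and automatically satisfies $c(w)\neq c(r)$); then $c(v)$ need only avoid $\{c(r),c(u)\}\cup g_{P_2}^w(c(w))$, with $c(v)\neq c(w)$ holding for free. This is a two-line argument.

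You instead split on the structure of $L(w)\setminus\{c(r)\}$ and of $g_{P_2}^w$, and in the one tight subcase you squeeze out a contradiction with $|L(v)\cap L(w)|\le 2$ after pinning down $L(v)$ exactly. One small point worth making explicit: your deduction ``$b_1\neq b_2$ forces $d_1=b_2$ and $d_2=b_1$'' uses not just $b_1\neq b_2$ but also that each set $\{a,c(r),b_i,d_i\}$ has four distinct elements (else $|L(v)|<4$), so that $b_2\neq a$ and $b_2\neq c(r)$ are already guaranteed. With that noted, the argument is complete. The paper's route is shorter because it applies the separation condition proactively in choosing $c(w)$, whereas you apply it reactively to rule out the last bad case; your version has the advantage of being a more mechanical case exhaustion.
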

\begin{proof}
Let $(C, X, \ex)$ be a configuration matching the template (B2) and let $L$ be an $(f,2)$-list assignment.
Let $c(r)$ be the unique color in the list $L(r)$.
Let $L(u) = \{a_1,a_2\}$. 
Since $P_1$ is an extra-special path, there is at least one $i \in \{1,2\}$ such that $g_{P_1}^u(a_i) = \emptyset$.
Assign $c(u) = a_i$.

If $c(r) \notin L(v)$, then select $c(w) \in L(w)$, and $L(v) \in L(v) \setminus \left(\{ c(u), c(w)\} \cup g_{P_2}^w(c(w))\right)$; the coloring extends to $P_1$ and $P_2$.

If $c(r) \in L(v)$, then select $c(w) \in L(w) \setminus L(v)$; observe $c(w) \neq c(r)$.
There exists a color $c(v) \in L(v) \setminus \left( \{ c(r), c(u) \} \cup g_{P_2}^w(c(w))\right)$; the coloring extends to $P_1$ and $P_2$.
%
%
\end{proof}

\begin{corollary}
Using Lemma~\ref{lma:iteration}, the configurations \ref{d4} and \ref{d4b} match the template (B2), and hence they are reducible.
\end{corollary}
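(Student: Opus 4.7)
The plan is to exhibit, for each of \ref{d4} and \ref{d4b}, an intermediate configuration $C_0$ matching template (B2) exactly, from which the full configuration is then recovered by an application of Lemma~\ref{lma:iteration}. Label the vertices as in Figure~\ref{fig:configurations}. In both configurations I take $v = MB$ (with $\ex(v)=0$), $r = MT$ (with $\ex(r)=\infty$), and $w = Ra$ (with $\ex(w)=1$), so the triangle $vwr$ carries the external degrees required by (B2). The special $vw$-path $P_2 = MB$-$Rd$-$Rc$-$Rb$-$Ra$ traverses the right-hand pentagon, and its three internal vertices each satisfy $(d_C, \ex) = (2,2)$, as required.

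The extra-special path $P_1$ traverses the left-hand pentagon, with its prescribed consecutive pair $(x,y)$ chosen to coincide with the two neighbors of the black vertex $g$. In \ref{d4b}, where $g$ is adjacent to $Lb$ and $Lc$, I take $u = La$ (satisfying $\ex(La)=2$) and $P_1 = La$-$Lb$-$Lc$-$Ld$-$MB$ with pair $(Lb, Lc)$ and $z = g$. In \ref{d4}, where $g$ is instead attached to the edge $La$-$Lb$, I take $u = Ld$ and $P_1 = Ld$-$Lc$-$Lb$-$La$-$MB$ with pair $(Lb, La)$ and $z = g$. In each case $g$ has no further neighbors in $C$, and the two vertices of the pair have $(d_C, \ex)=(3,1)$ as required.

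The only residual discrepancy with a verbatim (B2) match is the edge $Ld$-$MT$: in \ref{d4b} it gives the intended normal internal vertex $Ld$ of $P_1$ the signature $(d_C, \ex) = (3,1)$ instead of $(2,2)$, while in \ref{d4} it gives the intended endpoint $u = Ld$ external degree $1$ rather than $2$. I therefore take $C_0$ to be the configuration obtained by deleting the edge $Ld$-$MT$ and increasing $\ex(Ld)$ by one. Then $C_0$ matches template (B2) on the nose with the assignments above and is therefore reducible by the preceding lemma; the full configuration is recovered from $C_0$ by reintroducing the edge $Ld$-$MT$ and decreasing $\ex(Ld)$ by one.

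The principal obstacle is that Lemma~\ref{lma:iteration} as stated requires both endpoints of the added edge to lie in $X$, whereas $MT \notin X$. The argument adapts essentially verbatim, though: since $\ex(MT) = \infty$ forces $|L(MT)|=1$, the color $c(MT)$ is prescribed, so given any $(f,2)$-list assignment $L$ on the full configuration, one first assigns $c(MT)$ and then removes it from $L(Ld)$ to obtain a valid $(f_0, 2)$-list assignment on $C_0$. The coloring of $C_0$ furnished by its reducibility automatically assigns $Ld$ a color distinct from $c(MT)$ and so extends to a proper $L$-coloring of the full configuration, completing the proof.
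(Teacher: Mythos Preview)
Your proposal is correct and follows the same approach the paper intends: identify $v=MB$, $r=MT$, $w=Ra$, take $P_2$ around the right pentagon, and take $P_1$ around the left pentagon with the extra-special pair at the two neighbors of $g$; the extraneous edge $Ld$--$MT$ is the sole obstruction, to be handled via Lemma~\ref{lma:iteration}. You have in fact been more careful than the paper, correctly observing that Lemma~\ref{lma:iteration} as stated requires both endpoints in $X$ while $MT\notin X$, and supplying the (trivial) adaptation: since $f(MT)=1$ the color $c(MT)$ is fixed, so removing it from $L(Ld)$ yields a valid $(f_0,2)$-assignment on $C_0$ and any resulting coloring automatically respects the reinstated edge.
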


\section{No Chorded 5-Cycle}\label{sec:5cycles}

In this section we show the case of forbidding chorded 5-cycles from Theorem~\ref{thm:42choosableconditions}. 

\begin{theorem}\label{thm:5cycles}
If $G$ is a plane graph not containing a chorded 5-cycle, then $G$ is $(4,2)$-choosable.
\end{theorem}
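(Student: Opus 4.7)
The plan is a minimum-counterexample discharging argument in the style of Section~\ref{sec:overview}. Suppose for contradiction that $G$ is a plane graph with no chorded 5-cycle that has a $(4,2)$-list assignment $L$ admitting no $L$-coloring, chosen with $|V(G)|$ minimum. A vertex of degree at most three is trivially reducible by coloring $G-v$ and extending, so $\delta(G)\geq 4$; moreover, by Section~\ref{sec:reducible}, none of the configurations \ref{d11}--\ref{lastconf} appear in $G$. The hypothesis that there is no chorded 5-cycle forces strong local structure on small faces: a 3-face sharing an edge with a 4-face produces a chorded 5-cycle along the shared edge, so 3-faces and 4-faces are non-adjacent, and any cluster containing three 3-faces arranged as a strip, fan, or wheel (as in (K5a) or (K5b)) already contains a chorded 5-cycle, so every cluster in $G$ is a single triangle or a diamond.

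I would then assign initial charges $\mu(v)=d(v)-6$ and $\nu(f)=2\ell(f)-6$, giving $\sum_v\mu(v)+\sum_f\nu(f)=-12$ by Euler's formula. Because $\delta(G)\geq 4$ and $\ell(f)\geq 3$, the only objects with negative initial charge are the 4-vertices (charge $-2$) and the 5-vertices (charge $-1$); every 3-face begins at $0$, and all $5^+$-faces and $6^+$-vertices carry strictly positive surplus available for redistribution.

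The discharging rules will funnel this surplus to the deficient 4- and 5-vertices. A natural rule set would have each $\ell$-face with $\ell\geq 5$ distribute an equal portion of its positive charge to each incident 4- or 5-vertex, while each $6^+$-vertex sends a prescribed amount across each incident 3-face (and across each diamond it participates in) to its low-degree partner. The constants must be tuned so that every 4-vertex recovers $2$ units, every 5-vertex recovers $1$ unit, every 3-face cluster breaks even once we aggregate the contributions routed through it, and every donating face or vertex retains nonnegative charge. One clean way to discover the constants is to work backward from the list of available reducible configurations.

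The main obstacle is the case analysis verifying that each 4- and 5-vertex ends nonnegative in every local picture that survives the reducibility results. The tightest scenarios arise when a low-degree vertex sits on a diamond, since then two of its incident faces are charge-neutral 3-faces and the deficit must be covered by the remaining faces, which may themselves be small or may bound further low-degree vertices. The configurations \ref{d11}, \ref{3paths}, and \ref{3pathsB} appear aimed at excluding long chains of low-degree vertices around a deficit site, while the template configurations \ref{d5}, \ref{d6}, \ref{d8}, \ref{d4}, and \ref{d4b} knock out specific diamond-with-bad-neighbors pictures. After eliminating all such pictures, the residual neighborhoods around any 4- or 5-vertex will each admit the bookkeeping above, and we conclude that every vertex and every face ends with nonnegative charge, contradicting $\sum \mu + \sum \nu = -12$.
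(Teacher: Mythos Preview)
Your overall framework matches the paper's: minimum counterexample with $\delta(G)\geq 4$, initial charges $\mu(v)=d(v)-6$ and $\nu(f)=2\ell(f)-6$, and the correct structural observations that the chorded-5-cycle hypothesis forbids 3-fans (hence clusters are triangles or diamonds) and that 3-faces are non-adjacent to 4-faces. This is the right skeleton.

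Where the proposal falls short is that the discharging rules are never actually specified, and the naive rule you sketch does not work. Consider a 4-vertex of type~(d) in Figure~\ref{R1R2config}: it is incident to two 3-faces and two $5^+$-faces. If both of the latter are 5-faces (charge $4$ each), an ``equal portion'' rule gives at most $4/5$ from each, far short of the required $2$. The paper's fix is an asymmetric rule: a $4^+$-face sends a full $1$ to an incident 4-vertex when that face is adjacent to a 3-face at that vertex, and $\tfrac12$ otherwise. This asymmetry is essential, but it creates a new problem: a 5-face bordered by many 4-vertices and 3-faces can now go negative. The paper's solution is a second pass---call such a 5-face \emph{needy}, use \ref{d11} to show a needy 5-face has exactly one 5-vertex and deficit exactly $\tfrac12$, and have that 5-vertex repay $\tfrac12$. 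But now a 5-vertex incident to three 3-faces and a needy 5-face can itself go negative, so a fourth rule and a substantial case analysis (driven by \ref{d2}, \ref{d1}, \ref{d9}, \ref{d7}, \ref{bigneedy}, \ref{d5}, \ref{d6}, \ref{d8}, \ref{d4}, \ref{d4b}) is needed to show that a needy 5-vertex always has a non-needy $5^+$-face available, and that this face can afford the donation. Your proposal names the configurations but does not supply this multi-stage back-and-forth or carry out the case analysis, which is where essentially all of the work lies. The suggestion that $6^+$-vertices act as donors is not used in the paper and would not help in the tight cases, which involve 4- and 5-vertices surrounded by 5-faces with no $6^+$-vertex in sight.
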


\begin{proof}
Let $G$ be a counterexample minimizing $n(G)$ among all plane graphs avoiding chorded 5-cycles with a $(4,2)$-list assignment $L$ such that $G$ is not $L$-choosable.
Observe that $n(G) \geq 4$; in fact, $\delta(G) \geq 4$.
Since $G$ is a minimal counterexample, $G$ does not contain any of the reducible configurations 
\ref{d11}--\ref{lastconf}.
If $(C,X,\ex)$ is a reducible configuration, then by Lemma~\ref{lma:iteration} $C$ does not appear as a subgraph of $G$ where $d_G(x) \leq d_C(x)+\ex(x)$ for all $x\in V(C)$.
Further, the configurations \ref{d1}--\ref{d4b} are large enough that we must consider configurations that are formed by identifying certain pairs of vertices in these configurations. 
In Appendix~\ref{appx:reducible}, we concretely check all vertex pairs that avoid creating a chorded 5-cycle and find that all resulting configurations are reducible.

For each $v \in V(G)$ and $f \in F(G)$ define initial charges $\mu(v) = d(v) - 6$ and $\nu(f) = 2 \ell(f) - 6$.
By Euler's Formula, the sum of initial charges is $-12$.
After charges are initially assigned, the only elements with negative charge are 4-vertices and 5-vertices.
Since chorded 5-cycles are forbidden, there is no 3-fan in $G$ and every 4-face is adjacent to only $4^+$-faces. 
The possible arrangements of 3-, $4^+$-, or $5^+$-faces incident to 4- and 5-vertices are shown in Figure~\ref{R1R2config}.

\begin{figure}[h]
\centering
\begin{tikzpicture}[line cap=round,line join=round,>=triangle 45,x=1.0cm,y=1.0cm]
\draw [fill=black] (2.,2.) circle (1.5pt);
\draw[color=black] (2.14,2.15) node {$v$};
\draw (2.,2.)-- (1.,2.); \draw [fill=white] (1.,2.) circle (1.5pt);
\draw (2.,2.)-- (2.,3.); \draw [fill=white] (2.,3.) circle (1.5pt);
\draw (2.,1.)-- (2.,2.); \draw [fill=white] (2.,1.) circle (1.5pt);
\draw (3.,2.)-- (2.,2.); \draw [fill=white] (3.,2.) circle (1.5pt);
\draw (4.,2.)-- (5.,2.); 
\draw (5.,2.)-- (5.,3.);
\draw (5.,3.)-- (4.,2.);
\draw (5.,2.)-- (6.,2.); \draw [fill=white] (6.,2.) circle (1.5pt); \draw [fill=white] (4.,2.) circle (1.5pt);
\draw (5.,2.)-- (5.,1.); \draw [fill=white] (5.,1.) circle (1.5pt); \draw [fill=white] (5.,3.) circle (1.5pt);
\draw (8.,3.)-- (7.,2.);
\draw (7.,2.)-- (8.,2.); 
\draw (8.,2.)-- (8.,3.); 
\draw (9.,2.)-- (8.,3.); 
\draw (8.,2.)-- (9.,2.);
\draw (8.,2.)-- (8.,1.); \draw [fill=white] (8.,1.) circle (1.5pt);\draw [fill=white] (9.,2.) circle (1.5pt);\draw [fill=white] (7.,2.) circle (1.5pt); \draw [fill=white] (8.,3.) circle (1.5pt);
\draw (10.,2.)-- (11.,2.);
\draw (11.,2.)-- (11.,3.);
\draw (11.,3.)-- (10.,2.); 
\draw (11.,2.)-- (11.,1.); 
\draw (11.,1.)-- (12.,2.);
\draw (12.,2.)-- (11.,2.); 
 \draw [fill=white] (10.,2.) circle (1.5pt);
  \draw [fill=white] (12.,2.) circle (1.5pt);
  \draw [fill=white] (11.,1.) circle (1.5pt);
   \draw [fill=white] (11.,3.) circle (1.5pt);
\draw (2,0.75) node[anchor=north] {(a) $4^+\,4^+\,4^+\,4^+$};
\draw (5,0.75) node[anchor=north] {(b) $3\,5^+\,4^+\,5^+$};
\draw (8,0.75) node[anchor=north] {(c) $3\,3\,5^+\,5^+$};
\draw (11,0.75) node[anchor=north] {(d) $3\,5^+\,3\,5^+$};
\draw (0.5,-1.5)-- (0.5,-0.5);
\draw (0.5,-1.5)-- (-0.549,-1.191);
\draw (0.5,-1.5)-- (-0.088,-2.309);
\draw (0.5,-1.5)-- (1.088,-2.309);
\draw (0.5,-1.5)-- (1.451,-1.191);
\draw (3.5,-1.5)-- (2.912,-2.309);
\draw (3.5,-1.5)-- (4.088,-2.309);
\draw (3.5,-1.5)-- (2.549,-1.191);
\draw (3.5,-0.5)-- (3.5,-1.5);
\draw (3.5,-1.5)-- (4.451,-1.191);
\draw (2.912,-2.309)-- (4.088,-2.309);
\draw (6.5,-0.5)-- (6.5,-1.5);
\draw (6.5,-1.5)-- (5.549,-1.191);
\draw (5.549,-1.191)-- (5.912,-2.309);
\draw (5.912,-2.309)-- (6.5,-1.5);
\draw (6.5,-1.5)-- (7.088,-2.309);
\draw (7.088,-2.309)-- (7.451,-1.191);
\draw (7.451,-1.191)-- (6.5,-1.5);
\draw (9.5,-0.5)-- (8.549,-1.191);
\draw (8.549,-1.191)-- (9.5,-1.5);
\draw (9.5,-1.5)-- (9.5,-0.5);
\draw (9.5,-0.5)-- (10.451,-1.191);
\draw (10.451,-1.191)-- (9.5,-1.5);
\draw (9.5,-1.5)-- (8.912,-2.309);
\draw (10.088,-2.309)-- (9.5,-1.5);
\draw (12.5,-0.5)-- (11.549,-1.191);
\draw (11.549,-1.191)-- (12.5,-1.5);
\draw (12.5,-1.5)-- (12.5,-0.5);
\draw (12.5,-0.5)-- (13.451,-1.191);
\draw (13.451,-1.191)-- (12.5,-1.5);
\draw (12.5,-1.5)-- (11.912,-2.309);
\draw (11.912,-2.309)-- (13.088,-2.309);
\draw (13.088,-2.309)-- (12.5,-1.5);
\draw [fill=white] (0.5,-0.5) circle (1.5pt);
\draw [fill=white] (-0.549,-1.191) circle (1.5pt);
\draw [fill=white] (-0.088,-2.309) circle (1.5pt);
\draw [fill=white] (1.088,-2.309) circle (1.5pt);
\draw [fill=white] (1.451,-1.191) circle (1.5pt);

\draw [fill=white] (2.912,-2.309) circle (1.5pt);
\draw [fill=white] (4.088,-2.309) circle (1.5pt);
\draw [fill=white] (2.549,-1.191) circle (1.5pt);
\draw [fill=white] (4.451,-1.191) circle (1.5pt);
\draw [fill=white] (3.5,-0.5) circle (1.5pt);

\draw [fill=white] (5.912,-2.309) circle (1.5pt);
\draw [fill=white] (7.088,-2.309) circle (1.5pt);
\draw [fill=white] (5.549,-1.191) circle (1.5pt);
\draw [fill=white] (7.451,-1.191) circle (1.5pt);
\draw [fill=white] (6.5,-0.5) circle (1.5pt);

\draw [fill=white] (8.912,-2.309) circle (1.5pt);
\draw [fill=white] (10.088,-2.309) circle (1.5pt);
\draw [fill=white] (8.549,-1.191) circle (1.5pt);
\draw [fill=white] (10.451,-1.191) circle (1.5pt);
\draw [fill=white] (9.5,-0.5) circle (1.5pt);

\draw [fill=white] (11.912,-2.309) circle (1.5pt);
\draw [fill=white] (13.088,-2.309) circle (1.5pt);
\draw [fill=white] (11.549,-1.191) circle (1.5pt);
\draw [fill=white] (13.451,-1.191) circle (1.5pt);
\draw [fill=white] (12.5,-0.5) circle (1.5pt);

\draw (0.5,-2.5) node[anchor=north] {(e) $4^+\,4^+\,4^+\,4^+\,4^+$};
\draw (3.5,-2.5) node[anchor=north] {(f) $3\,5^+\,4^+\,4^+\,5^+$};
\draw (6.5,-2.5) node[anchor=north] {(g) $3\,5^+\,3\,5^+\,5^+$};
\draw (9.5,-2.5) node[anchor=north] {(h) $3\,3\,5^+\,4^+\,5^+$};
\draw (12.5,-2.5) node[anchor=north] {(i) $3\,3\,5^+\,3\,5^+$};
\begin{scriptsize}
\draw [fill=black] (5.,2.) circle (1.5pt);
\draw[color=black] (5.14,2.15) node {$v$};
\draw [fill=black] (8.,2.) circle (1.5pt);
\draw[color=black] (8.14,2.15) node {$v$};
\draw [fill=black] (11.,2.) circle (1.5pt);
\draw[color=black] (11.14,2.15) node {$v$};
\draw [fill=black] (0.5,-1.5) circle (1.5pt);
\draw[color=black] (0.64,-1.3) node {$v$};
\draw [fill=black] (3.5,-1.5) circle (1.5pt);
\draw[color=black] (3.64,-1.3) node {$v$};
\draw [fill=black] (6.5,-1.5) circle (1.5pt);
\draw[color=black] (6.64,-1.3) node {$v$};
\draw [fill=black] (9.5,-1.5) circle (1.5pt);
\draw[color=black] (9.64,-1.3) node {$v$};
\draw [fill=black] (12.5,-1.5) circle (1.5pt);
\draw[color=black] (12.64,-1.3) node {$v$};
\end{scriptsize}
\end{tikzpicture}
\caption{\label{R1R2config} Possible cyclic arrangements of $3$-, $4^+$-, and $5^+$-faces incident to 4- and 5-vertices}
\end{figure}
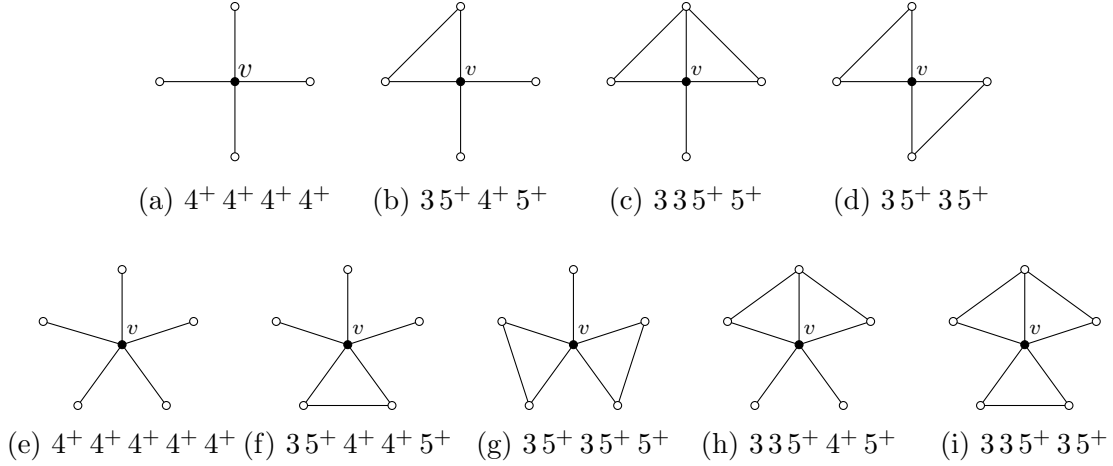

Sequentially apply the following discharging rules.  
Note that, for a vertex $v$ and a face $f$, we define $\mu_i (v)$ and $\nu_i (f)$ to be the charge on $v$ and $f$, respectively, after applying rule (R$i$).

\begin{itemize}
\item[(R1)] Let $v$ be a 4-vertex and $f$ be a $4^+$-face incident to $v$.
If $f$ is adjacent to a 3-face that is also incident to $v$, then $f$ sends charge $1$ to $v$; otherwise, $f$ sends charge $\frac{1}{2}$ to $v$.

\item[(R2)] Let $v$ be a 5-vertex.  
If $f$ is a $4^+$-face incident to $v$, then $f$ sends charge $\frac12$ to $v$.
\end{itemize}

A face $f$ is a \emph{needy} face if $\nu_2 (f) < 0$; otherwise, $f$ is \emph{non-needy}.

\begin{itemize}
\item[(R3)] If $v$ is a 5-vertex incident to a needy 5-face $f$, then $v$ sends charge $\frac12$ to $f$.
\end{itemize}

A vertex $v$ is a \emph{needy} vertex if $\mu_3 (v) < 0$; otherwise, $v$ is \emph{non-needy}.

\begin{itemize}
\item[(R4)] If $f$ is a non-needy $5^+$-face incident to a needy 5-vertex $v$, then $f$ sends charge $\frac12$ to $v$.
\end{itemize}

%

We show that $\mu_4 (v) \geq 0$ for each vertex $v$ and $\nu_4 (f) \geq 0$ for each face $f$.
Since the total charge was preserved during the discharging rules, this contradicts the negative charge sum from the initial charge values.
We begin by considering the charge distribution after applying (R1) and (R2).

Let $v$ be a vertex.
If $v$ is a 4-vertex, then $\mu(v) = -2$ and $v$ receives total charge at least 2 from its neighboring faces by (R1).
Furthermore, $v$ is not affected by any rules after (R1), so $\mu_4 (v) \geq 0$.
If $v$ is a $6^+$-vertex, then $\mu(v) \geq 0$ and $v$ is not affected by any other rules, so $\mu_4 (v) \geq 0$.
If $v$ is a 5-vertex, then $\mu(v) = -1$ and $v$ receives  total charge at least 1 from its neighboring faces by (R2).
Therefore, for any vertex $v$, $\mu_2 (v) \geq 0$.

Let $f$ be a face.
If $f$ is a 3-face, then $\nu(f) = 0$ and $f$ is not affected by any rule, so $\nu_4 (f) = 0$.
If $f$ is a 4-face, then $\nu(f) = 2$.
In (R1) and (R2), the only faces that send charge 1 to a single vertex are adjacent to a 3-face.
A 4-face adjacent to a 3-face is a chorded 5-cycle, which is forbidden by assumption, so $f$ sends charge at most $\frac12$ to each vertex.
Since 4-faces are not affected by rules (R3)--(R4), $\nu_4 (f) \geq 0$.
If $f$ is a $6^+$-face, then $f$ has at least as much initial charge as it has incident vertices.
If $v$ is a 4-vertex incident to $f$, then $f$ sends charge at most 1 to $v$ by (R1) and does not send any charge to $v$ by rules (R2)--(R4).
If $v$ is a 5-vertex incident to $f$, then $f$ sends charge $\frac12$  to $v$ by (R1), and possibly another charge $\frac12$ by (R4), and does not send charge to $v$ by (R1) or (R3).
Thus $f$ sends charge at most 1 to each incident vertex, and $\nu_4 (f) \geq 0$.

If $f$ is a 5-face, then $\nu(f) = 4$ and $f$ sends charge at most 1 to each incident vertex by (R1) and (R2).
Observe that if $\nu_2 (f) = -1$, then $f$ is incident to five 4-vertices and $f$ is adjacent to at least one 3-face; this forms \ref{d11}, a contradiction.
Therefore, we have the following claim about the structure of a needy 5-vertex.

\begin{claim}
If $f$ is a needy 5-face, then $\nu_2(f) = -\frac12$ and $f$ is adjacent to exactly one $5$-vertex.
\end{claim}

We now consider the charge distribution after applying (R3).
If $f$ is a needy 5-face, then $\nu_2 (f) = -\frac12$ and $f$ is adjacent to exactly one 5-vertex, so $\nu_3 (f) = 0$.
No faces lose charge in (R3), therefore $\nu_3 (f) \geq 0$ for any face $f$.

\begin{claim}
If $v$ is a needy 5-vertex, then $v$ is incident to three 3-faces, two $4^+$-faces, and exactly one needy 5-face; hence $\mu_3(v) = -\frac{1}{2}$.
\end{claim}

\begin{proof}
Suppose that $v$ is a vertex such that $\mu_3 (v) < 0$, and consider the cyclic arrangement of 3- and $4^+$-faces about $v$.

\begin{mycases}
\mycase{$v$ is incident to at least four $4^+$-faces (Figures~\ref{R1R2config}(e) and \ref{R1R2config}(f)).}
Since $\mu_2(v) \geq 1$ and $\mu_3(v) < 0$, $v$ is incident to at least three needy 5-faces.
Hence two of the needy 5-faces are adjacent, forming \ref{d1}, a contradiction.

\mycase{$v$ is incident to two non-adjacent $3$-faces and three $4^+$-faces (Figure~\ref{R1R2config}(g)).}
Since $\mu_2(v) = \frac{1}{2}$ and $\mu_3(v) < 0$, $v$ is incident to two needy 5-faces, $f_1$ and $f_2$.
If these two faces are adjacent, then they form \ref{d1}, a contradiction.  
Otherwise, they share a 3-face $t$ as a neighbor and all vertices incident to $f_1$, $f_2$, and $t$ other than $v$ are 4-vertices, so the vertices incident to $f_1$ and $t$ form \ref{d2}, a contradiction.

\mycase{$v$ is incident to two adjacent $3$-faces and three $4^+$-faces (Figure~\ref{R1R2config}(h)).}
Since $\mu_2(v) = \frac{1}{2}$ and $\mu_3(v) < 0$, $v$ is incident to two needy 5-faces, $f_1$ and $f_2$.
If $f_1$ and $f_2$ are adjacent then they form \ref{d1}, a contradiction.
Thus, $f_1$ and $f_2$ are not adjacent, but they are each adjacent to a 3-face incident to $v$.
Since $f_i$ is needy for each $i \in \{1,2\}$, $f_i$ sent charge 1 to every 4-vertex incident to $f_i$.
By (R1), every 4-vertex incident to $f_i$ is incident to a 3-face adjacent to $f_i$.  
Therefore, $f_1$ is adjacent to a 3-face that does not share any vertices with the the two 3-faces incident to $v$, forming one of \ref{d4} or \ref{d4b}, a contradiction.

\mycase{$v$ is incident to three $3$-faces and two $4^+$-faces (Figure~\ref{R1R2config}(i)).}
If $v$ is incident to two needy 5-faces $f_1$ and $f_2$, then the 3-face $t$ adjacent to both $f_1$ and $f_2$ is incident to two 4-vertices, and the vertices incident to $f_1$ and $t$ form \ref{d2}, a contradiction.  
Therefore, $v$ is incident to exactly one needy 5-face, as claimed. \qedhere
\end{mycases}
\end{proof}

By (R4), every needy 5-vertex receives charge $\frac12$ from its unique incident non-needy $5^+$-face, so $\mu_4 (v) \geq 0$ for every vertex $v$.
Each needy 5-face has nonnegative charge after (R3), so if $\nu_4(f) < 0$ for some $5$-face $f$, then $f$ sends charge by (R4), and thus is non-needy.

\begin{figure}[h]
\centering

\mbox{
\subfigure[\label{2needy}A 5-face $f$ with $\nu_4(f) < 0$.]{
\begin{tikzpicture}[scale=0.66,line cap=round,line join=round,>=triangle 45,x=2.0cm,y=2.0cm]
\draw (2.022,2.208)-- (1.917,3.202); \draw [fill=black]  (2.022,2.208) circle (2.5pt);
\draw (1.917,3.202)-- (2.831,3.609);  \draw [fill=black]  (1.917,3.202) circle (2.5pt);
\draw (2.831,3.609)-- (3.5,2.866);  \draw [fill=black] (2.831,3.609) circle (2.5pt);
\draw (3.5,2.866)-- (3.,2.); \draw [fill=black] (3,2) circle (2.5pt);
\draw (3.,2.)-- (2.022,2.208);
\draw (3.,2.)-- (4,2.);
\draw (3.5,2.866)-- (4,2.);
\draw (4,2.)-- (4.978,2.208);
\draw (4.978,2.208)-- (5.083,3.202);
\draw (4.169,3.609)-- (5.083,3.202);
\draw (4.169,3.609)-- (3.5,2.866);
\draw (3.5,2.866)-- (3.5,4.352);
\draw (3.5,4.352)-- (4.169,3.609);\draw [fill=black] (3.5,4.352) circle (2.5pt);
\draw (3.5,4.352)-- (2.831,3.609);
\draw (4.5,2.95) node[anchor=north] {$f$};
\draw (2.5,2.95) node[anchor=north] {$f_1$};
\draw (3.25,3.78) node[anchor=north] {$t_1$};
\draw (3.75,3.78) node[anchor=north] {$t_2$};
\draw (3.5,2.5) node[anchor=north] {$t_3$};
\draw [fill=black] (3.5,2.866) circle (2.5pt);
\draw[color=black] (3.75,2.85) node {$v_1$};
\draw [fill=black] (4,2) circle (2.5pt);
\draw[color=black] (4,1.8) node {$v_2$};
\draw [fill=black] (4.978,2.208) circle (2.5pt);
\draw[color=black] (5.15,2.05) node {$v_3$};
\draw [fill=black] (5.083,3.202) circle (2.5pt);
\draw[color=black] (5.15,3.35) node {$v_4$};
\draw [fill=black] (4.169,3.609) circle (2.5pt);
\draw[color=black] (4.25,3.75) node {$v_5$};
\end{tikzpicture}
}

\subfigure[\label{2needyCase1} Claim~\ref{noadjneedyvert}, Case 1.]{
\begin{tikzpicture}[scale=0.66,line cap=round,line join=round,>=triangle 45,x=2.0cm,y=2.0cm]
\draw (3.5,2.866)-- (4,2.);
\draw (4,2.)-- (4.978,2.208);
\draw (4.978,2.208)-- (5.083,3.202);
\draw (4.169,3.609)-- (5.083,3.202);
\draw (4.169,3.609)-- (3.5,2.866);
\draw (4.3,2.9) node[anchor=north] {$f$};
\draw [fill=black] (3.5,2.866) circle (2.5pt);
\draw [fill=black] (4,2) circle (2.5pt);
\draw [fill=black] (4.978,2.208) circle (2.5pt);
\draw [fill=black] (5.083,3.202) circle (2.5pt);
\draw[color=black] (4.85,3.15) node {$v_i$};
\draw [fill=black] (4.169,3.609) circle (2.5pt);
\draw[color=black] (4.3,3.35) node {$v_{i+1}$};
\draw (4.169,3.609) -- (4.9,4.2) --  (5.083,3.202);
\draw [fill=black] (4.9,4.2) circle (2.5pt);
\draw[color=black] (4.8,3.95) node {$u$};
\draw[color=black] (4.7,3.59) node {$t$};
\draw (4.169,3.609) -- (3.25,3.9) --  (3.5,2.866);
\draw (4.169,3.609) -- (3.8,4.5) --  (3.25,3.9);
\draw [fill=black] (3.25,3.9) circle (2.5pt);
\draw [fill=black] (3.8,4.5) circle (2.5pt);
\draw (3.8,4.5) -- (4.2,5.0) -- (4.8,4.85) -- (4.9,4.2);
\draw[fill=black]  (4.2,5.0) circle (2.5pt);
\draw[fill=black] (4.8,4.85)  circle (2.5pt);
\draw[fill=white]  (4.2,5.0) circle (2pt);
\draw[fill=white] (4.8,4.85)  circle (2pt);
\draw[color=black] (4.4,4.3) node {$g_{i+1}$};
\draw (5.083,3.202) -- (5.8,2.7) --  (4.978,2.208);
\draw (5.083,3.202) -- (5.85,3.7) --   (5.8,2.7);
\draw [fill=black]  (5.8,2.7) circle (2.5pt);
\draw [fill=black] (5.85,3.7) circle (2.5pt);
\draw (5.85,3.7)  -- (5.95,4.35) -- (5.35,4.65) -- (4.9,4.2);
\draw [fill=black] (5.95,4.35) circle (2.5pt);
\draw [fill=black] (5.35,4.65) circle (2.5pt);
\draw [fill=white] (5.95,4.35) circle (2pt);
\draw [fill=white] (5.35,4.65) circle (2pt);
\draw[color=black] (5.25,4.80) node {$a$};
\draw[color=black] (5.4,4) node {$g_i$};
\end{tikzpicture}
}

\subfigure[\label{2needyCase2} Claim~\ref{noadjneedyvert}, Case 2.]{
\begin{tikzpicture}[scale=0.66,line cap=round,line join=round,>=triangle 45,x=2.0cm,y=2.0cm]
\draw (3.5,2.866)-- (4,2.);
\draw (4,2.)-- (4.978,2.208);
\draw (4.978,2.208)-- (5.083,3.202);
\draw (4.169,3.609)-- (5.083,3.202);
\draw (4.169,3.609)-- (3.5,2.866);
\draw (4.3,2.9) node[anchor=north] {$f$};
\draw [fill=black] (3.5,2.866) circle (2.5pt);
\draw [fill=black] (4,2) circle (2.5pt);
\draw [fill=black] (4.978,2.208) circle (2.5pt);
\draw [fill=black] (5.083,3.202) circle (2.5pt);
\draw[color=black] (4.85,3.15) node {$v_i$};
\draw [fill=black] (4.169,3.609) circle (2.5pt);
\draw[color=black] (4.3,3.35) node {$v_{i+1}$};
\draw (4.169,3.609) -- (4.9,4.2) --  (5.083,3.202);
\draw [fill=black] (4.9,4.2) circle (2.5pt);
\draw[color=black] (4.7,3.59) node {$t$};
\draw[color=black] (4.8,3.95) node {$u$};
\draw [fill=black] (5.7,3.8) circle (2.5pt);
\draw (4.9,4.2)--(5.7,3.8)--(5.083,3.202);
\draw[color=black] (5.8,3.95) node {$w$};
\draw (4.169,3.609) -- (3.25,3.9) --  (3.5,2.866);
\draw (4.169,3.609) -- (3.8,4.5) --  (3.25,3.9);
\draw [fill=black] (3.25,3.9) circle (2.5pt);
\draw [fill=black] (3.8,4.5) circle (2.5pt);
\draw (3.8,4.5) -- (4.2,5.0) -- (4.8,4.85) -- (4.9,4.2);
\draw[fill=black]  (4.2,5.0) circle (2.5pt);
\draw[fill=black] (4.8,4.85)  circle (2.5pt);
\draw[fill=white]  (4.2,5.0) circle (2pt);
\draw[fill=white] (4.8,4.85)  circle (2pt);
\draw[color=black] (4.05,5.1) node {$b$};
\draw[color=black] (4.4,4.3) node {$g_{i+1}$};
\draw (5.083,3.202) -- (5.8,2.7) --  (4.978,2.208);
\draw [fill=black]  (5.8,2.7) circle (2.5pt);
\draw (5.8,2.7) -- (6.3,2.95) -- (6.25,3.6) -- (5.7,3.8);
\draw [fill=black]  (6.3,2.95) circle (2.5pt);
\draw [fill=black]  (6.25,3.6) circle (2.5pt);
\draw [fill=white]  (6.3,2.95) circle (2pt);
\draw [fill=white]  (6.25,3.6) circle (2pt);
\draw[color=black] (5.7,3.25) node {$g_i$};
\end{tikzpicture}
}
}
\caption{\label{last5cyclefig} Special cases for a 5-face $f$ with $\nu_4(f) < 0$.}
\end{figure}
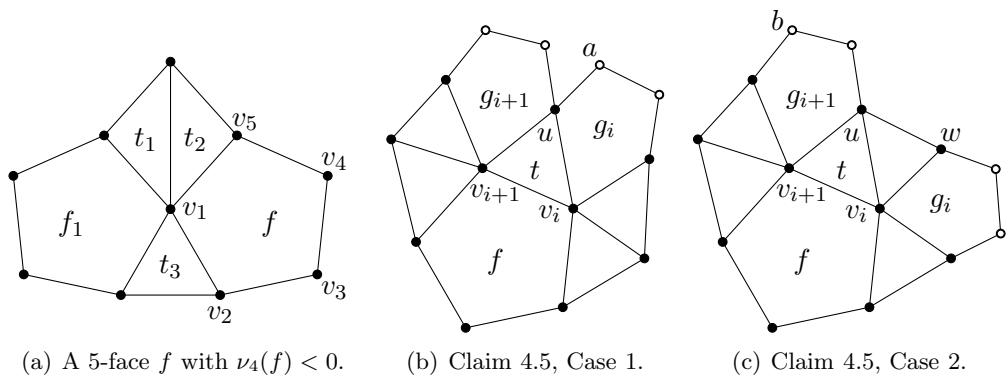

Consider the Figure~\ref{2needy}, where $f$ is a 5-face with $\nu_4(f) < 0$, $f$ is incident to vertices $v_1,\dots,v_5$, $v_1$ is a needy 5-vertex, and $f_1$ is the needy 5-face incident to $v_1$.
Let $t_1$ and $t_2$ be the adjacent pair of 3-faces incident to $v_1$ with $t_1$ adjacent to $f_1$ and $t_2$ adjacent to $f$; let $t_3$ be the other 3-face incident to $v_1$.
We make two basic claims about this arrangement.

\begin{claim}
The vertex $v_2$ adjacent to $v_1$ and incident to $t_3$ is a $5^+$-vertex.
\label{adj5vert}
\end{claim}

\begin{proof}
If $v_2$ is a 4-vertex, then the vertices incident to $f_1$ and $t_3$ form \ref{d2}, a contradiction.
\end{proof}

\begin{claim}
If $v_i$ and $v_{i+1}$ are consecutive vertices on the border of $f$, then at most one of $v_i$ and $v_{i+1}$ is needy.
\label{noadjneedyvert}
\end{claim}

\begin{proof}
Suppose that two consecutive vertices $v_i$ and $v_{i+1}$ are needy 5-vertices.  
Let $g_i$ and $g_{i+1}$ be the needy 5-faces incident to $v_i$ and $v_{i+1}$, respectively.
Since both $v_i$ and $v_{i+1}$ have three incident 3-faces, $f$ is adjacent to a 3-face $t$ across the edge $v_i v_{i+1}$.  
Let $u$ be the third vertex incident to $t$ and consider two cases.

\begin{mycases}
\mycase{$t$ is not in a diamond (Figure~\ref{2needyCase1}).}
Since $g_i$ is needy, the vertex $a$ adjacent to $u$ and incident to $g_i$ (with $a \neq v_i$) is a 4-vertex and is incident to a 3-face $t_i$ such that $t_i$ is adjacent to $g_i$.
The vertices incident to $g_i$, $g_{i+1}$, $t$, and $t_i$ form one of \ref{d7} or \ref{d8}, a contradiction.

\mycase{$t$ is in a diamond (Figure~\ref{2needyCase2}).} 
Let $w$ be the fourth vertex in the diamond and assume, without loss of generality, that $v_i$ is adjacent to $w$.  
Let $b$ be the vertex incident to $g_{i+1}$ that is not adjacent to $u$ or $v_{i+1}$ along the boundary of $g_{i+1}$; since $g_{i+1}$ is needy, there is a 3-face $t_{i+1}$ incident to $b$ and adjacent to $g_{i+1}$.
The vertices $v_{i}$ and $w$ and those incident to $g_{i+1}$ and $t_{i+1}$ form one of \ref{d5} or \ref{d6}, a contradiction.\qedhere
\end{mycases}
\end{proof}

By Claim~\ref{noadjneedyvert}, $f$ is incident to at most two needy vertices, and by Claim~\ref{adj5vert}, $v_2$ is non-needy.
If $f$ is incident to exactly one needy 5-vertex, then $v_3,v_4,$ and $v_5$ are 4-vertices since $\mu_2(f) = 0$, but then the vertices incident to $f$ and $f_1$ form \ref{d9}, a contradiction.

Therefore, $f$ is incident to two needy vertices, and since $v_2$ is a $5^+$-vertex, $f$ is incident to exactly two 4-vertices.
Each of these receives charge 1, so $\nu_4 (f) = -\frac12$.
By Claim~\ref{noadjneedyvert}, the needy vertices incident to $f$ consist of $v_1$ and exactly one of $v_3$ or $v_4$.
The needy 5-vertex $v_i$ other than $v_1$ is also incident to three 3-faces $t_4, t_5$, and $t_6$, where $t_4$ and $t_5$ form a diamond with $t_4$ adjacent to $f$.
By Claim~\ref{adj5vert}, the vertex adjacent to $v_i$ and incident to both $f$ and $t_6$ is  a non-needy $5^+$-vertex.
The only non-needy $5^+$-vertex incident to $f$ is $v_2$, and hence $v_3$ is a needy 5-vertex and $t_4$ is incident to $v_4$.
If $v_2$ is a $6^+$-vertex, then $\nu_4(f) \geq 0$.
Therefore, there is a unique arrangement of needy vertices, 4-vertices, and a 5-vertex about a 5-face $f$ with $\nu_4(f) < 0$ (Figure~\ref{vnonneedyf2hneedy}).
For $i \in \{1,3\}$, let $f_i$ be the needy $5$-face incident to the needy 5-vertex $v_i$.

\begin{figure}[h]
\centering
\begin{tikzpicture}[scale=0.66,line cap=round,line join=round,>=triangle 45,x=2.0cm,y=2.0cm]
\draw [fill=black] (4,3) circle (2.5pt);
\draw [fill=black] (3.5,4.539) circle (2.5pt);
\draw [fill=black] (4.5,4.539) circle (2.5pt);
\draw [fill=black] (4.809,3.588) circle (2.5pt);
\draw [fill=black] (3.191,3.588) circle (2.5pt);
\draw [fill=black] (3.086,2.593) circle (2.5pt);
\draw [fill=black] (1.608,3.251) circle (2.5pt);
\draw [fill=black] (2.277,3.995) circle (2.5pt);
\draw [fill=black] (2.108,2.385) circle (2.5pt);
\draw [fill=black] (3.086,2.593) circle (2.5pt);
\draw [fill=black] (4.914,2.593) circle (2.5pt);
\draw [fill=black] (5.723,3.995) circle (2.5pt);
\draw [fill=black]  (6.392,3.251) circle (2.5pt);
\draw [fill=black] (5.414,4.946) circle (2.5pt);
\draw [fill=black] (5.892,2.385) circle (2.5pt);
\draw [fill=black] (2.586,4.946) circle (2.5pt);
\draw (3.5,4.539)-- (4.5,4.539);
\draw (4.5,4.539)-- (4.809,3.588);
\draw[] (4.5,4.539)--(4.5,4.9939);
\draw[] (3.5,4.539)--(3.5,4.9939);
\draw (4.809,3.588)-- (4.,3.);
\draw (4.,3.)-- (3.191,3.588);
\draw (3.191,3.588)-- (3.5,4.539);
\draw (4.,3.)-- (3.086,2.593);
\draw (3.086,2.593)-- (3.191,3.588);
\draw (3.191,3.588)-- (2.277,3.995);
\draw (2.277,3.995)-- (1.608,3.251);
\draw (1.608,3.251)-- (2.108,2.385);
\draw (2.108,2.385)-- (3.086,2.593);
\draw (4.,3.)-- (4.914,2.593);
\draw (4.914,2.593)-- (4.809,3.588);
\draw (4.809,3.588)-- (5.723,3.995);
\draw (5.723,3.995)-- (6.392,3.251);
\draw (6.392,3.251)-- (5.892,2.385);
\draw (5.892,2.385)-- (4.914,2.593);
\draw (2.277,3.995)-- (2.586,4.946);
\draw (2.586,4.946)-- (3.5,4.539);
\draw (4.5,4.539)-- (5.414,4.946);
\draw (5.414,4.946)-- (5.723,3.995);
\draw (3.191,3.588)-- (2.586,4.946);
\draw (4.809,3.588)-- (5.414,4.946);
\draw [] (3.086,2.593)-- (3.086,1.843);
\draw [] (4.,3.)-- (4.,2.25);
\draw [] (4.914,2.593)-- (4.914,1.843);
\draw (2.6,4.2) node {$t_1$};
\draw (3.15,4.4) node {$t_2$};
\draw (4.9,4.4) node {$t_4$};
\draw (5.35,4.2) node {$t_5$};
\draw (3.4,3.05) node {$t_3$};
\draw (4.6,3.05) node {$t_6$};
\draw (5.55,3.2) node {$f_3$};
\draw (2.45,3.2) node {$f_1$};
\draw (4,3.9) node {$f$};
\draw[color=black] (4,3.2) node {$v_2$};
\draw[color=black] (3.45,3.638) node {$v_1$};
\draw[color=black] (4.55,3.638) node {$v_3$};
\draw[color=black] (4.35,4.339) node {$v_4$};
\draw[color=black] (3.65,4.339) node {$v_5$};
\end{tikzpicture}
\caption{\label{vnonneedyf2hneedy} A non-needy 5-vertex $v_2$ incident to a non-needy 5-face $f$ with $\nu_4(f) < 0$.}
\end{figure}
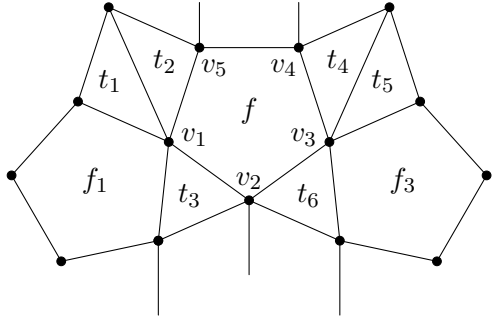

The vertices incident to $f$, $f_1$, $f_3$, $t_3$, and $t_6$ form \ref{bigneedy}, so this arrangement does not appear within $G$; hence $\nu_4(f) \geq 0$ for all 5-faces $f$.
Therefore, every vertex and face has nonnegative charge after (R4), contradicting the negative initial charge sum.
Thus, a minimal counterexample does not exist and every plane graph with no chorded 5-cycle is $(4,2)$-choosable.
\end{proof}

\section{No Chorded 6-Cycle}\label{sec:6cycles}

In this section we show the case of forbidding chorded 6-cycles from Theorem~\ref{thm:42choosableconditions}.
The case of forbidding doubly-chorded 6- and 7-cycles follows from a very similar argument.
We give the full proof for no chorded 6-cycles and describe the differences for the proof when we forbid doubly-chorded 6- and 7-cycles.

\begin{theorem}\label{thm:cc6}
If $G$ is a plane graph not containing a chorded 6-cycle, then $G$ is $(4,2)$-choosable.
\end{theorem}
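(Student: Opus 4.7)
Take $G$ to be a minimum counterexample: $n(G)$ is smallest among plane graphs with no chorded 6-cycle that admit a $(4,2)$-list-assignment $L$ for which no $L$-coloring exists. Standard minimality arguments give $\delta(G)\geq 4$; moreover, by Lemma~\ref{lma:iteration} and the reducibility results of Section~\ref{sec:reducible}, none of the configurations \ref{cycle4}--\ref{d2} appears in $G$ with its prescribed external-degree bounds. As usual, we will also need to handle pairs of vertices in these configurations that can be identified without creating a chorded 6-cycle, each producing another reducible configuration.

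Next, assign initial charges by the natural choice $a_v=a_f=1$, $b=4$, i.e., $\mu(v)=d(v)-4$ and $\nu(f)=\ell(f)-4$. Euler's formula gives total charge $-8$. Since $\delta(G)\geq 4$ and face lengths are at least $3$, the only elements of negative initial charge are the 3-faces, each carrying exactly $-1$; every vertex and every $4^+$-face is already nonnegative. Following the general strategy described in Section~\ref{sec:overview}, the plan is to send charge from $5^+$-faces and $5^+$-vertices toward the 3-faces, and to account for the deficit \emph{cluster-by-cluster} rather than one 3-face at a time.

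The first substantive step is to list the clusters from Figure~\ref{fig:clusters} that can actually occur in $G$. Many of the 4-triangle clusters (K6a)--(K6r) contain a chorded 6-cycle either on their boundary or via an internal chord of an outer 6-cycle and are therefore immediately ruled out by hypothesis; only a short sublist of clusters, together with the triangle (K3) and the diamond (K4), need to be analyzed. With the list of surviving clusters in hand, we design discharging rules of the form \emph{each $5^+$-face sends a fixed amount of charge to each adjacent 3-face}, and \emph{each $5^+$-vertex sends a fixed amount to each incident 3-face}, with the fractions tuned so that a cluster of $k$ triangles is guaranteed to receive a total of at least $k$ from the surrounding $5^+$-objects; additional rules redirect charge within a cluster or from a $5^+$-face to an incident needy 4-face, as needed.

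The heart of the argument, and the main obstacle, is the simultaneous verification: for every surviving cluster we must show that the boundary supplies enough charge, while for every $5^+$-vertex and every $5^+$-face we must show it is not drained below zero. The absence of chorded 6-cycles is used twice here: it caps how many 3-faces can share a given vertex or face (bounding how many demands can fall on a single donor), and it limits the global shape of each cluster. The reducible configurations \ref{cycle4}--\ref{d2} are the main weapon on the demand side, ruling out the sparsest local arrangements (for instance a cluster whose boundary is entirely $4$-vertices and 4-faces), which forces a $5^+$-donor to sit nearby. Once every vertex, every $4^+$-face, and every cluster ends with nonnegative charge, the total is nonnegative, contradicting the initial total $-8$, and the counterexample $G$ cannot exist.
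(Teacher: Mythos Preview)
Your outline matches the paper's strategy in its broad strokes---minimal counterexample, charges $\mu(v)=d(v)-4$ and $\nu(f)=\ell(f)-4$, cluster-by-cluster accounting---but what you have written is a plan, not a proof, and several of the concrete commitments you do make are off. First, the cluster census: the forbidden chorded 6-cycle in fact kills \emph{every} cluster (K6a)--(K6r), not ``many''; the surviving clusters are exactly (K3), (K4), (K5a), (K5b), (K5c), and you need to say so and then check each one. Second, the discharging mechanism you sketch (``each $5^+$-face sends a fixed amount to each adjacent 3-face, \ldots\ additional rules redirect charge \ldots\ to an incident needy 4-face'') is not what the paper does and would be awkward to make work: a 3-face may be adjacent to a 4-face (this creates a chorded 5-cycle, not a chorded 6-cycle), and that 4-face has zero charge to give. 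The paper handles this with a pull-through rule (R1b): the 3-face pulls $\tfrac{1}{9}$ through the 4-face from each of the three faces beyond it, so the 3-face still receives $\tfrac{1}{3}$ across that edge. Without this mechanism, isolated triangles and diamonds bordered by 4-faces would never reach zero.

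Two smaller corrections. You cite all of \ref{cycle4}--\ref{d2}, but the paper's proof uses only \ref{diamond1} and \ref{diamond2}; these are exactly what force a $5^+$-vertex onto the shared edge of every diamond, which is the demand-side input you need in (K4), (K5a), (K5b), (K5c). And the vertex-identification bookkeeping you mention (``pairs of vertices \ldots\ that can be identified without creating a chorded 6-cycle'') is irrelevant here: that machinery is for the chorded-5-cycle argument, where the configurations are large. The diamond configurations are tiny and appear as induced subgraphs without any identification analysis.
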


\begin{proof}
Let $G$ be a counterexample minimizing $n(G)$ among all plane graphs avoiding chorded 6-cycles with a $(4,2)$-list assignment $L$ such that $G$ is not $L$-choosable.
Observe that $n(G) \geq 5$; in fact, $\delta(G) \geq 4$.
Since $G$ is a minimal counterexample, $G$ does not contain any of the reducible configurations.
Specifically, we use the fact that $G$ avoids \ref{diamond1} and \ref{diamond2} (see Figure~\ref{fig:configurations}).

For each $v \in V(G)$ and $f \in F(G)$ define initial charge $\mu(v) = d(v) - 4$ and $\nu(f) = \ell(f) - 4$.
By Euler's Formula, the initial charge sum is $-8$.
Since $\delta(G) \geq 4$, the only elements of negative charge are 3-faces.
Since a chorded 6-cycle is forbidden and $\delta(G)\geq 4$, the clusters (see Figure~\ref{fig:clusters}) are triangles (K3), diamonds (K4), 3-fans (K5a), 4-wheels (K5b), and 4-fans with end vertices identified (K5c).
Specifically note that the 4-fan (K6b) contains a chorded 6-cycle, so at most three 3-faces in a cluster share a common vertex, unless they form a 4-wheel (K5b) and the common vertex is the 4-vertex in the center of the wheel.

Apply the following discharging rules, as shown in Figure~\ref{fig-rules6}.

\begin{itemize}
\item[(R1)] If $f$ is a 3-face and $e$ is an incident edge, then let $g$ be the face adjacent to $f$ across $e$.

\begin{itemize}
\item[(R1a)] If $g$ is a $5^+$-face, then $f$ pulls charge $\frac{1}{3}$ from $g$ ``through'' the edge $e$.

\item[(R1b)] If $g$ is a 4-face, then let $e_1$, $e_2$, and $e_3$ be the other edges incident to $g$. 
For each $i \in \{1,2,3\}$, let $h_i$ be the face adjacent to $g$ across $e_i$. For each $i \in \{1,2,3\}$, the face $f$ pulls charge $\frac{1}{9}$ from the face $h_i$ ``through'' the edges $e$ and $e_i$.
\end{itemize}

\item[(R2)] Let $v$ be a $5^+$-vertex, and let $f$ be an incident 3-face.

\begin{itemize}
\item[(R2a)] If $v$ is a $5$-vertex, then $v$ sends charge $\frac{1}{3}$ to $f$.

\item[(R2b)] If $v$ is a $6^+$-vertex, then $v$ sends charge $\frac{4}{9}$ to $f$.
\end{itemize}
\item[(R3)] If $X$ is a cluster, then every 3-face in $X$ is assigned the average charge of all 3-faces in $X$. 
\end{itemize}

\begin{figure}[htp]
\def\sixcyclerulescale{0.75}
\begin{center}
\begin{tabular}[h]{VVVV}
\begin{tikzpicture}[scale=\sixcyclerulescale,vtx/.style={shape=coordinate}]
	\node[vtx] (a) at (0,0) {};
	\node[vtx] (b) at (180:3) {};
	\node[vtx] (c) at ($(b) !1! 60:(a)$) {};
	\foreach \p in {a,b,c}
	{
		\fill (\p) circle (2pt);
	}
	\draw (a) -- (b) -- (c) -- (a) -- ($(a) !0.4! 180:(b)$);
	\draw (c) -- ($(c) !0.4! 165:(b)$);
	\begin{scope}[>=latex,very thick,densely dashed]
		\def\mytheta{25}
		\draw[->] ($(a) !1/(2*cos(\mytheta))! -\mytheta:(c)$) -- ($(a) !1/(2*cos(\mytheta))! \mytheta:(c)$);
		\node at ($(a) !0.69! -17:(c)$) {$\frac{1}{3}$};
	\end{scope}
	\node at ($(b) !0.4! 32:(a)$) {$f$};
	\node at ($(b) !1.3! 30:(a)$) {$g$};
	\node at ($(b) !0.8! 15:(a)$) {$e$};
\end{tikzpicture}
&
\begin{tikzpicture}[scale=\sixcyclerulescale,vtx/.style={shape=coordinate}]
	\node[vtx] (a) at (0,0) {};
	\node[vtx] (b) at (180:3) {};
	\node[vtx] (c) at ($(b) !1! 90:(a)$) {};
	\node[vtx] (d) at ($(c) !1! 90:(b)$) {};
	\node[vtx] (e) at ($(c) !1/(2*cos(45))! 45:(d)$) {};
	\foreach \p in {a,b,c,d,e}
	{
		\fill (\p) circle (2pt);
	}
	\draw (a) -- (b) -- (c) -- (d) -- (e) -- (c) -- (d) -- (a);
	\draw (a) -- +(-30:1) -- (a) -- +(-60:1);
	\draw (b) -- +(-120:1) -- (b) -- +(210:1);
	\draw (c) -- +(150:1);
	\draw (d) -- +(30:1);
	\begin{scope}[>=latex,very thick,densely dashed]
		\def\mytheta{25}
		\draw[->] ($(b) !1/(2*cos(\mytheta))! -\mytheta:(a)$) -- ($(c) !1/(2*cos(\mytheta-10))! \mytheta-10:(d)$);
		\node at ($(b) !0.6! -15:(a)$) {$\frac{1}{9}$};
		\draw[->] ($(c) !1/(2*cos(\mytheta))! -\mytheta:(b)$) .. controls ($(c) !1/(2*cos(\mytheta))! \mytheta:(b)$) .. ($(c) !1/(4*cos(\mytheta))! \mytheta:(d)$);
		\node at ($(c) !0.35! -17:(b)$) {$\frac{1}{9}$};
		\draw[->] ($(a) !1/(2*cos(\mytheta))! -\mytheta:(d)$) .. controls ($(a) !1/(2*cos(\mytheta))! \mytheta:(d)$) .. ($(d) !1/(4*cos(\mytheta))! -\mytheta:(c)$);
		\node at ($(a) !0.7! -12:(d)$) {$\frac{1}{9}$};
	\end{scope}
	\node at ($(e) !0.3! -45:(d)$) {$f$};
	\node at ($(b) !0.8! 35:(a)$) {$g$};
	\node at ($(d) !0.4! 11:(c)$) {$e$};
	\node at ($(a) !0.3! -45:(d)$) {$h_1$};
	\node at ($(a) !0.4! 12:(d)$) {$e_1$};
	\node at ($(b) !0.3! -45:(a)$) {$h_2$};
	\node at ($(b) !0.37! 12:(a)$) {$e_2$};
	\node at ($(b) !0.3! 45:(c)$) {$h_3$};
	\node at ($(b) !0.4! -14:(c)$) {$e_3$};
\end{tikzpicture}
&
\begin{tikzpicture}[scale=\sixcyclerulescale,vtx/.style={shape=coordinate}]
	\node[vtx] (a) at (0,0) {};
	\node[vtx] (b) at (180:3) {};
	\node[vtx, label=left:$v$] (v) at ($(b) !1! 60:(a)$) {};
	\foreach \p in {a,b,v}
	{
		\fill (\p) circle (2pt);
	}
	\draw (a) -- (b) -- (v) -- (a);
	\draw (v) -- +(45:1) -- (v) -- +(90:1) -- (v) -- +(135:1);
	\begin{scope}[>=latex,very thick,densely dashed]
		\def\mytheta{30}
		\node[vtx] (tip) at ($(v) !0.5! ($(b) !0.5! (a)$)$) {};
		\draw[->] (v) -- (tip);
		\node at ($(v) !0.9! 10:(tip)$) {$\frac{1}{3}$};
	\end{scope}
	\node at ($(v) !1.5! (tip)$) {$f$};
\end{tikzpicture}
&
\begin{tikzpicture}[scale=\sixcyclerulescale,vtx/.style={shape=coordinate}]
	\node[vtx] (a) at (0,0) {};
	\node[vtx] (b) at (180:3) {};
	\node[vtx, label=left:$v$] (v) at ($(b) !1! 60:(a)$) {};
	\foreach \p in {a,b,v}
	{
		\fill (\p) circle (2pt);
	}
	\draw (a) -- (b) -- (v) -- (a);
	\draw (v) -- +(45:1) -- (v) -- +(65:1) -- (v) -- +(115:1) -- (v) -- +(+(135:1);
	\draw[dashed] (v) -- +(90:1);
	\begin{scope}[>=latex,very thick,densely dashed]
		\def\mytheta{30}
		\node[vtx] (tip) at ($(v) !0.5! ($(b) !0.5! (a)$)$) {};
		\draw[->] (v) -- (tip);
		\node at ($(v) !0.9! 10:(tip)$) {$\frac{4}{9}$};
	\end{scope}
	\node at ($(v) !1.5! (tip)$) {$f$};
\end{tikzpicture}
\\
(R1a) &
(R1b) &
(R2a) &
(R2b) \\
\end{tabular}
\end{center}
\caption{Discharging rules in the proof of Theorem~\ref{thm:cc6}.}\label{fig-rules6}
\end{figure}
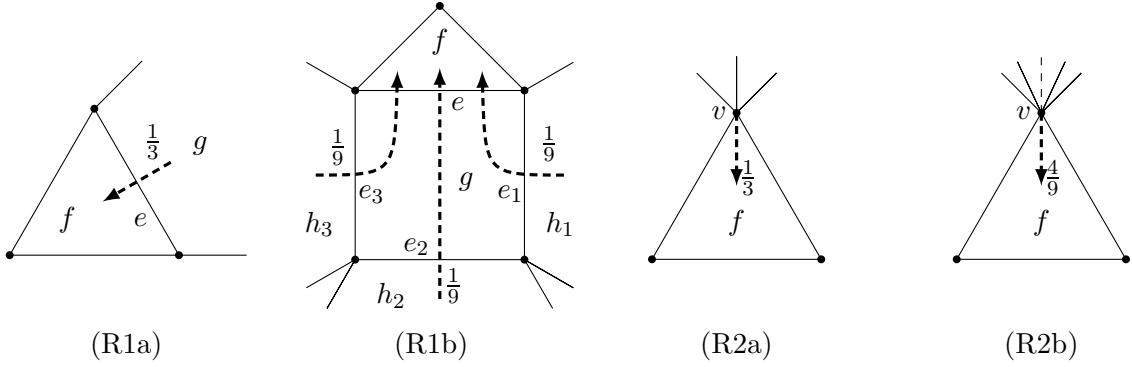

Notice that the rules preserve the sum of the charges.
Let $\mu_i(v)$ and $\nu_i(f)$ denote the charge on a vertex $v$ or a face $f$ after rule (R$i$).
We claim that $\mu_3(v) \geq 0$ for every vertex $v$ and $\nu_3(f) \geq 0$ for every face $f$; since the total charge sum is preserved by the discharging rules, this contradicts the negative charge sum from the initial charge values.

Let $v$ be a vertex.
If $v$ is a 4-vertex, then $v$ is not involved in any rule, so the resulting charge is $0$.
If $v$ is a 6$^+$-vertex, then by (R2b) $v$ loses charge $\frac{4}{9}$ to each incident 3-face.
Since $G$ avoids chorded 6-cycles, $v$ is incident to at most $\lfloor \frac{3}{4}d(v)\rfloor$ 3-faces.
Thus $\mu_3(v)$ satisfies
\[
\mu_3(v) \geq d(v) - 4 - \frac{4}{9}\left\lfloor \frac{3}{4}d(v)\right\rfloor \geq d(v) - 4 - \frac{4}{9}\cdot \frac{3}{4}d(v) = \frac{2}{3}d(v)-4\geq 0.
\]
If $v$ is a 5-vertex, then by (R2a) $v$ loses charge $\frac{1}{3}$ to each incident 3-face. Since $G$ avoids chorded 6-cycles, $v$ is incident to at most three 3-faces, so
\[
\mu_3(v) \geq d(v) - 4 - \frac{1}{3} \cdot 3= d(v) - 5  = 0.
\]
Therefore, $\mu_3(v) \geq 0$ for every vertex $v$.

Let $f$ be a face.
Since $4$-faces are not adjacent to $4$-faces, (R1b) does not affect the charge value on 4-faces.
Thus, $\nu_3(f) = 0$ for every 4-face $f$.

If $f$ is a $6^+$-face, then $f$ loses charge at most $\frac{1}{3}$ through each edge by (R1a) or (R1b), so
\[
	\nu_3(f) \geq \ell(f) - 4 - \frac{1}{3}\ell(f) = \frac{2}{3}\ell(f) - 4 \geq 0.
\]
Therefore, $\nu_3(f) \geq 0$ for every $6^+$-face $f$.

Let $f$ be a 5-face.
Since $G$ contains no chorded 6-cycles, $f$ is not adjacent to a 3-face.
Therefore, $f$ loses no charge by (R1a), but could lose charge using (R1b), so
\[
	\nu_3(f) \geq \ell(f) - 4 - \frac{1}{9}\ell(f) = \frac{8}{9} \ell(f)-4\geq 0.
\]
Therefore, $\nu_3(f) \geq 0$ if $f$ is a $5$-face.
All objects that start with nonnegative charge have nonnegative charge after the discharging process.
It remains to show that each cluster of 3-faces receives enough charge to result in a nonnegative charge sum.

\begin{mycases}

\mycase{(K3)}
Let $f$ be an isolated 3-face.
The three adjacent faces $g_1$, $g_2$, and $g_3$ are all $4^+$-faces.
By (R1a) or (R1b), $f$ receives charge $\frac{1}{3}$ through each incident edge, so $\nu_3(f) = -1 + 3\cdot\frac{1}{3} = 0$.

\mycase{(K4)}
Let $f_1$ and $f_2$ be 3-faces in a diamond cluster (K4).
Then $f_1$ is adjacent to two $4^+$-faces $g_1$ and $g_2$, and $f_2$ is adjacent to two $4^+$-faces $h_1$ and $h_2$.
By (R1a) or (R1b), the cluster receives charge $\frac{1}{3}$ through each of the four edges on the boundary of the diamond.
Since $\nu(f_1)+\nu(f_2) = -2$, the charge value on the diamond after rule (R1) is $-\frac{2}{3}$.
Since $G$ contains no \ref{diamond1}, there is a $5^+$-vertex $v$ incident to both $f_1$ and $f_2$.
If $v$ is a $5$-vertex, then by (R2a), $f_1$ and $f_2$ each receive charge $\frac{1}{3}$, and the resulting charge on the diamond is zero.
If $v$ is a $6^+$-vertex, then by (R2b), $f_1$ and $f_2$ each receive charge $\frac{4}{9}$, and the resulting charge on the diamond is positive.

\mycase{(K5a)}
Let $f_1$, $f_2$, and $f_3$ be 3-faces in a 3-fan cluster (K5a), where $f_2$ is adjacent to both $f_1$ and $f_3$.
The initial charge on this cluster is $-3$.
There are five edges on the boundary of this cluster, so by (R1) the cluster receives charge $\frac{5}{3}$, resulting in charge $-\frac{4}{3}$ after (R1).
Note that the face $f_2$ is adjacent to both $f_1$ and $f_3$.
Since $G$ contains no \ref{diamond1}, there exists a $5^+$-vertex $v$ incident to both $f_1$ and $f_2$,
and there exists a $5^+$-vertex $u$ incident to both $f_2$ and $f_3$.
If $v \neq u$, then by (R2) $v$ sends charge at least $\frac{1}{3}$ to each of $f_1$ and $f_2$ and $u$ sends charge at least $\frac{1}{3}$ to each of $f_2$ and $f_3$, resulting in a nonnegative charge on the 3-fan.
If $v = u$ and $v$ is a $6^+$-vertex, then by (R2b) $v$ sends charge $\frac{4}{9}$ to each face $f_1$, $f_2$, and $f_3$, resulting in a nonnegative charge on the 3-fan.
Otherwise, suppose that $v=u$ and $v$ is a $5$-vertex.
Since $G$ contains no \ref{diamond2}, there exists another $5^+$-vertex $w$ incident to at least one of $f_1$ and $f_2$.
By (R2a) $v$ sends charge $\frac{1}{3}$ to each of $f_1$, $f_2$, and $f_3$, and by (R2) $w$ sends charge at least $\frac{1}{3}$ to at least one of $f_1$ and $f_2$, resulting in a nonnegative charge on the 3-fan.

\mycase{(K5b)}
Let $f_1$, $f_2$, $f_3$, and $f_4$ be 3-faces in a 4-wheel (K5b).
The initial charge on this cluster is $-4$.
There are four edges on the boundary of this cluster, so by (R1) the cluster receives charge $\frac{4}{3}$, resulting in charge $-\frac{8}{3}$ after (R1).
Let $v$ be the 4-vertex incident to all four 3-faces.
Let $u_1$, $u_2$, $u_3$, and $u_4$ be the vertices adjacent to $v$, ordered cyclically such that $vu_iu_{i+1}$ is the boundary of the 3-face $f_i$ for $i\in\{1,2,3\}$ and $vu_4u_1$ is the boundary of $f_4$.
Since $G$ contains no \ref{diamond1} and $d(v) = 4$, each $u_i$ is a $5^+$-vertex.
By (R2), each $u_i$ sends charge at least $\frac{2}{3}$ to the cluster, resulting in a nonnegative total charge.

\mycase{(K5c)}
Let $f_1$, $f_2$, $f_3$, and $f_4$ be 3-faces in a 4-strip with identified vertices as in (K5c).  
The initial charge on this cluster is $-4$.
Let $v$, $u_1$, $u_2$, $u_3$, and $u_4$ be the vertices in the 4-strip, where $v$ is incident to only $f_1$ and $f_4$, $u_1$ is incident to only $f_1$ and $f_2$, $u_2$ is incident to $f_2$, $f_3$, and $f_4$, $u_3$ is incident to $f_1$, $f_2$, and $f_3$, and $u_4$ is incident to only $f_3$ and $f_4$.
There are six edges on the boundary of this cluster, so by (R1) the cluster receives charge $\frac{6}{3}$, resulting in charge $-\frac{6}{3}=-2$ after (R1).

Since $f_2$ and $f_3$ form a diamond, and $G$ contains no \ref{diamond1}, one of $u_2$ and $u_3$ is a $5^+$-vertex. 
Without loss of generality, assume $u_3$ is a $5^+$-vertex.
Since $f_3$ and $f_4$ form a diamond, and $G$ contains no \ref{diamond1}, one of $u_2$ and $u_4$ is a $5^+$-vertex.
If $u_2$ is a  $5^+$-vertex, then by (R2), the cluster receives charge at least $\frac{3}{3}+\frac{3}{3}$ from $u_2$ and $u_3$, which results in nonnegative total charge. Otherwise, $u_2$ is a 4-vertex and $u_4$ is $5^+$-vertex.
If $u_3$ is a $6^+$-vertex,  then by (R2), the cluster receives charge at least $\frac{4}{3}+\frac{2}{3}$ from $u_3$ and $u_4$.
If $u_3$ is a 5-vertex, then since $f_1$ and $f_2$ form a diamond and $G$ contains no \ref{diamond2}, one of $v$ and $u_1$ is a $5^+$-vertex. 
By (R2), the cluster receives charge at least $\frac{3}{3}+\frac{2}{3}+\frac{2}{3}$ from $u_3$ and $u_4$ and one of $v$ and $u_1$.
In either case, the final charge is nonnegative.

%
\end{mycases}

We have verified that the total charge after discharging is nonnegative, contradicting the negative initial charge sum.
Thus, a minimal counterexample does not exist and every planar graph with no chorded 6-cycle is $(4,2)$-choosable.
\end{proof}

\begin{corollary}\label{cor:dcc67}
If $G$ is a plane graph not containing a doubly-chorded 6-cycle or a doubly-chorded 7-cycle, then $G$ is $(4,2)$-choosable.
\end{corollary}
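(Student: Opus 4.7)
My plan is to mirror the proof of Theorem~\ref{thm:cc6} almost verbatim. Take $G$ to be a minimal counterexample avoiding both doubly-chorded 6-cycles and doubly-chorded 7-cycles, with a $(4,2)$-list assignment $L$ from which $G$ is not colorable. By minimality and Lemma~\ref{lma:iteration}, $G$ contains no reducible configuration from Figure~\ref{fig:configurations}; in particular $\delta(G) \geq 4$, and $G$ avoids \ref{diamond1} and \ref{diamond2}. I would assign the same initial charges $\mu(v) = d(v) - 4$ and $\nu(f) = \ell(f) - 4$ (total $-8$ by Euler's formula) and apply the identical rules (R1a), (R1b), (R2a), (R2b), and (R3) used in Theorem~\ref{thm:cc6}. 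The verification that every $5^+$-vertex and every non-3-face ends with nonnegative charge is unaffected by the change in hypothesis, so only the cluster analysis requires attention.

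The key new step is to verify that every cluster in $G$ is still one of (K3), (K4), (K5a), (K5b), (K5c). To do this I would scan Figure~\ref{fig:clusters} and, for each cluster (K6a)--(K6r), exhibit inside it either a 6-cycle with at least two chords or a 7-cycle with at least two chords. Typical cases are immediate: the 4-fan (K6b) has a boundary 6-cycle with three chords emanating from its hub; the 4-strip (K6a) has a boundary 6-cycle whose three interior edges are all chords; the 5-wheel (K6e) has a boundary 6-cycle with three chords from its hub; and the seven-vertex clusters such as (K6d) contain a 7-cycle along the outer boundary that inherits multiple triangulating chords. Once every (K6x) cluster is excluded, the possible clusters in $G$ are exactly (K3)--(K5c).

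With the cluster classification reduced to these five cases, the cluster charge analysis from the proof of Theorem~\ref{thm:cc6} carries over verbatim, since each case relied only on the reducibility of \ref{diamond1} and \ref{diamond2}. The total charge after discharging is therefore nonnegative, contradicting the initial sum of $-8$, so no minimal counterexample exists. The main obstacle in this plan is the finite but somewhat tedious case check excluding (K6a)--(K6r); a uniform observation that mitigates the work is that any cluster with four or more 3-faces on six or more vertices is a triangulation of its boundary 6- or 7-cycle whose triangulating interior edges supply at least two chords, which handles the bulk of the configurations at once.
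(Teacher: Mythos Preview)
Your overall strategy matches the paper's, but your claim that ``the verification that every $5^{+}$-vertex and every non-$3$-face ends with nonnegative charge is unaffected by the change in hypothesis'' is incorrect, and this is precisely where the paper's proof differs from that of Theorem~\ref{thm:cc6}. Two face cases in Theorem~\ref{thm:cc6} use the \emph{singly}-chorded $6$-cycle hypothesis in an essential way:
\begin{itemize}
\item The $4$-face argument there relies on the observation that two adjacent $4$-faces form a chorded $6$-cycle, hence $4$-faces are never adjacent and (R1b) never pulls charge from a $4$-face. Under the doubly-chorded hypothesis, adjacent $4$-faces are permitted, so this step fails as written. The repair (which is the paper's first bullet) is to observe that if a $4$-face $f$ is adjacent to a $4$-face $g$, then $g$ cannot also be adjacent to a $3$-face, since a $3$-face~$+$~$4$-face~$+$~$4$-face chain contains a doubly-chorded $7$-cycle; hence (R1b) still pulls nothing from $f$.
\item The $5$-face argument there relies on a $5$-face never being adjacent to a $3$-face (again a singly-chorded $6$-cycle). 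Under the new hypothesis a $5$-face \emph{can} be adjacent to a $3$-face, so (R1a) may remove $\tfrac{1}{3}$. The repair (the paper's second bullet) is that a $5$-face is adjacent to at most one $3$-face, since two $3$-faces on a $5$-face yield a doubly-chorded $7$-cycle; then $\nu_3(f) \ge 1 - \tfrac{1}{3} - 4\cdot\tfrac{1}{9} = \tfrac{2}{9} \ge 0$.
\end{itemize}

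Everything else in your plan is fine: the vertex bounds (at most three consecutive $3$-faces at a vertex) survive because a $4$-fan already contains a $6$-cycle with three chords, and the cluster classification reduces to (K3)--(K5c) exactly as you outline (indeed the paper attributes this solely to the absence of doubly-chorded $6$-cycles). But you do need to insert the two face corrections above; without them the $4$-face and $5$-face charge verifications are unsupported, and the no-doubly-chorded-$7$-cycle hypothesis is never used.
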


\begin{proof}
Let $G$ be a minimal counterexample by minimizing $n(G)$.
Observe that $n(G) \geq 4$ and $\delta(G) \geq 4$.
Since $G$ contains no doubly-chorded 6-cycle, the clusters are 3-faces (K3), diamonds (K4), 3-fans (K5a), 4-wheels (K5b), and 4-fans with end vertices identified (K5c).

Use the same discharging argument as in Theorem~\ref{thm:cc6}, with the following changes:

\begin{itemize}
\item
If $f$ is a 4-face, then $f$ can be adjacent to a 4-face $g$.
However, since $G$ contains no doubly-chorded 7-cycle, $g$ cannot be adjacent to a 3-face.
Therefore, $f$ does not lose charge by rule (R1b).

\item If $f$ is a 5-face, then $f$ can be adjacent to at most one 3-face $g$, since $G$ contains no doubly-chorded 7-cycle.
By (R1a) $f$ loses charge $\frac{1}{3}$ across the edge it shares with $g$, and by (R1b) $f$ loses charge at most $\frac{1}{9}$ across the other four edges.
Thus
\[
	\nu_3(f) \geq \ell(f) - 4 - \frac{1}{3} - 4\frac{1}{9} = \frac{2}{9} \geq 0.
\]
\end{itemize}

All of the other arguments from the proof of Theorem~\ref{thm:cc6} hold, which shows that the resulting total charge is nonnegative, and hence a minimal counterexample does not exist.
\end{proof}

\section{No Chorded 7-Cycle}
\label{sec:nochorded7cycle}
\label{sec:7cycles}

\begin{theorem}\label{thm:cc7}
If $G$ is a plane graph not containing a chorded 7-cycle, then $G$ is $(4,2)$-choosable.
\end{theorem}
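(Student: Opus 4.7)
My plan is to deduce Theorem~\ref{thm:cc7} from a strengthened formulation, Theorem~\ref{thm:cc7strengthened}, whose setting rules out separating 3-cycles in any minimal counterexample. A natural strengthening is: let $G$ be a plane graph with no chorded 7-cycle whose outer face is bounded by a triangle $T$, and let $\phi$ be a proper precoloring of $V(T)$ consistent with some $(4,2)$-list-assignment $L$; then $\phi$ extends to an $L$-coloring of $G$. If a minimal counterexample $G$ to this statement contained a separating 3-cycle $T'$, one could split $G$ along $T'$, apply the strengthened statement first to the outer region (with $T'$ as its outer face and a suitable coloring of $V(T')$), and then to the inner region (with the just-computed colors on $V(T')$ as the precoloring), contradicting the minimality of $G$. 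Consequently, the cluster analysis needs to consider only the ten clusters of Figure~\ref{fig:clusters} without separating 3-cycles: (K3), (K4), (K5a), (K5b), and (K6a)--(K6f).

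I would then use initial charges $\mu(v) = d(v) - 4$ and $\nu(f) = \ell(f) - 4$, as in Theorem~\ref{thm:cc6}, so that by Euler's formula the total charge is $-8$ and only 3-faces begin negative. The discharging rules mirror those of the chorded-6-cycle proof, but two subtleties arise because we now forbid only chorded 7-cycles: a 5-face may be adjacent to a single 3-face (two adjacent 3-faces on a 5-face would force a chorded 7-cycle), and a 4-face may be adjacent to up to two 3-faces (three would again force a chorded 7-cycle). I plan to use rules of the form: (R1a) each 3-face pulls $\tfrac{1}{3}$ through each shared edge with an adjacent $5^+$-face; (R1b) each 3-face pulls a smaller amount (roughly $\tfrac{1}{9}$) through an adjacent 4-face from the $5^+$-faces beyond; (R2) each $5^+$-vertex $v$ donates a rate increasing with $d(v)$ (such as $\tfrac{1}{3}$ for 5-vertices and $\tfrac{4}{9}$ for $6^+$-vertices) to each incident 3-face; and (R3) cluster-averaging. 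One verifies that 5-faces and 4-faces retain nonnegative charge, using the adjacency bounds above together with the fact that a 4-face adjacent to a 3-face cannot be adjacent to another chorded-cycle-creating neighbor.

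The bulk of the proof is then the cluster verification for the ten allowed cluster types. For (K3), (K4), (K5a), and (K5b), the argument is essentially identical to that of Theorem~\ref{thm:cc6}: count boundary edges to compute the (R1) contribution, and use the absence of \ref{diamond1} and \ref{diamond2} to force enough incident $5^+$-vertices to cover the deficit via (R2). For the new clusters (K6a)--(K6f), I would use the absence of \ref{4fan}, \ref{K6hconfig1}--\ref{K6hconfig3}, \ref{d11}, and \ref{d2} to force some rim vertices to have degree at least $6$ or $7$, after which the (R2) contribution suffices.

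The main obstacle is the 5-wheel (K6e): its initial deficit is $5$ and it has only five boundary edges, so (R1) supplies at most $\tfrac{5}{3}$, and the central vertex has degree exactly $5$. The deficit of $\tfrac{10}{3}$ must come from the five rim vertices via (R2), so I must force a combination of rim degrees large enough to yield this charge. I expect this to require a careful exploitation of the reducible configurations \ref{K6hconfig1}--\ref{K6hconfig3} (whose whole purpose is to constrain exactly this 5-wheel structure), possibly in combination with case analysis on the placement of any nearby clusters sharing a rim vertex. Balancing the (R1a), (R1b), and (R2) rates simultaneously so that every 5-face, 4-face, vertex, and cluster ends nonnegative, while staying above zero on the hardest cluster (K6e), is the principal technical challenge; if a uniform rate does not suffice, I anticipate introducing auxiliary rules that redirect surplus charge from non-tight neighborhoods to clusters of type (K6e).
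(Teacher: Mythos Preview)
Your overall strategy matches the paper's: prove a strengthened precolored statement that eliminates separating 3-cycles in a minimal counterexample, then discharge over the surviving cluster list. Two structural points are missing, and one stated rule fails outright.

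First, precolored vertices need not have degree at least $4$ --- they carry no list constraint --- so under $\mu(v)=d(v)-4$ they may begin with negative charge, contrary to your claim that only 3-faces do. The paper repairs this in two ways: it allows $P$ to be any of $P_1,P_2,P_3,K_3$ on a common face (not just a facial triangle), so that a low-degree precolored vertex $v$ can be deleted and $G-v$ still carries a legitimate precolored $P'$; and it adds $+2$ to the initial charge of each precolored vertex (and $+1$ to the face bounded by $V(P)$ when $P\cong K_3$), yielding total charge at most $-1$. Your triangle-only strengthening does not support the first repair: if $v\in V(T)$ has degree $2$, deleting it leaves an outer face that is no longer a triangle, so minimality cannot be invoked.

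Second, reducible configurations are not simply absent from a minimal counterexample to the strengthened statement. If $(C,X,\ex)$ occurs with $X\cap V(P)=\emptyset$, you delete $X$ and apply minimality; but if $X$ meets $V(P)$, no contradiction follows and the configuration may well be present. The paper's cluster analysis therefore repeatedly branches into ``the diamond contains a $5^{+}$-vertex'' versus ``the diamond contains a precolored vertex,'' and the second branch is discharged by a dedicated rule (R0) sending $\tfrac12$ from each precolored vertex to each incident 3-face. Without such a rule your verification of (K4), (K5a), (K5b), (K6a)--(K6f) does not close.

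Finally, your rule (R2) as written makes the center of (K6b) and (K6e) negative: a 5-vertex there is incident to $4$ or $5$ triangular faces, so sending $\tfrac13$ to each exceeds its initial charge of $1$. The paper sends $1/\max\{3,t_3(v)\}$ instead. There are further rate refinements you will need (the paper uses $\tfrac38$ in (R1a), a separate rule (R1c) for 4-faces with two 3-face neighbours, and an (R3) to rescue 6-faces drained through (R1c)), but these are tuning issues you already anticipate; the two structural gaps above are the ones that would block the argument.
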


We prove the following strengthened statement:

\begin{theorem}\label{thm:cc7strengthened}
Let $G$ be a planar graph with  no chorded 7-cycle, and let $P$ be a subgraph of $G$, where $P$ is isomorphic to one of $P_1, P_2, P_3,\text{ or }K_3$, and all vertices in $V(P)$ are incident to a common face $f$.
Let $L$ be a $(4,2)$-list assignment of $G - P$ and let $c$ be a proper coloring of $P$.
There exists an extension of $c$ to a proper coloring of $G$ such that $c(v) \in L(v)$ for all $v \in V(G-P)$.
\end{theorem}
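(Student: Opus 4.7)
The plan is to prove Theorem~\ref{thm:cc7strengthened} by taking a minimal counterexample $(G,P,L,c)$ with $n(G)$ smallest and deriving a contradiction via discharging, following the template of the proof of Theorem~\ref{thm:cc6} but with a richer cluster analysis. First I would establish three minimality reductions: \emph{(a)} $G$ has no separating 3-cycle $T$, since if $T$ separates $G$ with $P$ on one side (say the interior $G_1$), then applying the theorem inductively to $G_1$ (with $P$ still on a face of $G_1$) colors $T$, and then applying it to the exterior $G_2$ (with $T \cong K_3$ on its new boundary face) extends the coloring to all of $G$; \emph{(b)} every vertex in $V(G) \setminus V(P)$ has degree at least $4$, since a lower-degree vertex can be deleted, colored by minimality on $G-v$, and finished greedily from $L(v)$; \emph{(c)} the subgraph induced by $V(G) \setminus V(P)$ contains none of the reducible configurations \ref{cycle4}--\ref{d2} of Figure~\ref{fig:configurations}, using Lemma~\ref{lma:iteration} to pass between a configuration and its vertex identifications.

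Next I would exploit the combined structural restrictions. Because $G$ has no chorded 7-cycle, every cluster of 3-faces in $G$ appears as one of the clusters in Figure~\ref{fig:clusters}; because $G$ has no separating 3-cycle, the clusters whose boundaries contain a separating 3-cycle (drawn with bold edges), namely (K5c) and (K6g)--(K6r), are ruled out. Thus only (K3), (K4), (K5a), (K5b), and (K6a)--(K6f) remain. Assigning initial charges $\mu(v) = d(v) - 4$ and $\nu(f) = \ell(f) - 4$, Euler's formula gives a total charge of $-8$, and the only objects of negative initial charge are 3-faces. I would then set discharging rules in the spirit of (R1)--(R3) in Theorem~\ref{thm:cc6}: $5^+$-faces donate $\tfrac{1}{3}$ to adjacent 3-faces directly and smaller fractions through 4-face relays, $5$-vertices donate $\tfrac{1}{3}$ and $6^+$-vertices donate $\tfrac{4}{9}$ to each incident 3-face, and finally each cluster averages its charge internally.

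The main obstacle is the case analysis verifying nonnegative total charge on each remaining cluster. The clusters (K3)--(K5b) are handled essentially as in Theorem~\ref{thm:cc6}. The six new clusters (K6a)--(K6f) each contain six 3-faces, so each carries a substantial initial deficit and must recruit charge from many $5^+$- or $6^+$-vertices on its boundary. For each, I would use the reducibility of \ref{diamond1}, \ref{diamond2}, \ref{4fan}, \ref{3paths}, \ref{3pathsB}, \ref{d1}, and \ref{d2}, combined with Lemma~\ref{lma:iteration}, to force enough high-degree vertices around the boundary, and then count incoming charge versus deficit edge by edge. The precolored subgraph $P$ perturbs the discharging only within the single face $f$ that contains it: since $P$ has at most three vertices, this perturbation is local and can be absorbed either by giving $f$ a small reserved charge or by directly extending the coloring in a bounded neighborhood of $P$ without appealing to the discharging balance there. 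A nonnegative final total then contradicts the initial sum of $-8$, completing the proof.
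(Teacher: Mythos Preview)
Your overall strategy matches the paper's, but two concrete pieces of the argument do not go through as written.

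First, the discharging rules from Theorem~\ref{thm:cc6} are not sufficient here. In the no-chorded-6-cycle setting, a 4-face cannot be adjacent to two 3-faces (any such arrangement contains a chorded 6-cycle), so rule (R1b) always delivers a full $\tfrac{1}{3}$ to the 3-face through the 4-face. In the no-chorded-7-cycle setting this fails: a 4-face with 3-faces on two of its edges creates only chorded 5- and 6-cycles, no chorded 7-cycle, so such configurations occur. With your (R1b), a 3-face in this situation pulls charge from another 3-face, which is useless. The paper fixes this with a separate rule (R1c) (charge $\tfrac{3}{16}$ through each of the two non-triangle edges), uses the larger unit $\tfrac{3}{8}$ in (R1a), and bumps the $6^+$-vertex contribution to $\tfrac{1}{2}$. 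As a side effect, a 6-face surrounded by such 4-faces can go negative, which forces an additional rule (R3) and the use of \ref{cycle6}. None of this is present in your proposal. (Incidentally, the clusters (K6a)--(K6f) have 4, 4, 4, 5, 5, 6 triangular faces respectively, not six each; the label refers to the longest cycle length, not the face count.)

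Second, your treatment of the precolored vertices is not an argument. Vertices of $P$ can have degree $2$ or $3$ in $G$, so with $\mu(v)=d(v)-4$ they carry negative charge, and clusters that meet $P$ cannot invoke \ref{diamond1}, \ref{diamond2}, \ref{4fan}, etc., to produce $5^+$-vertices (those reducibilities only exclude configurations whose deletion set $X$ avoids $V(P)$). Saying the perturbation ``can be absorbed'' does not address either problem. The paper handles this by building $P$ into the charge function, setting $\mu(v)=d(v)-4+2\delta(v)$ with $\delta(v)=1$ for $v\in V(P)$, adding a rule (R0) that precolored vertices send $\tfrac{1}{2}$ to incident 3-faces, and minimizing $n(G)-\tfrac14 n(P)$ so that $P$ can be assumed to be $P_3$ or $K_3$. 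The cluster analysis then explicitly splits on whether the cluster contains a precolored face or a precolored vertex, using the (R0) charge in place of the forbidden reducible-configuration argument near $P$.
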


\begin{proof}
Suppose that there exists a counterexample. Select a counterexample $(G,P,L,c)$ by minimizing $n(G)-\frac{1}{4}n(P)$ among all chorded 7-cycle free plane graphs, $G$, with a subgraph $P$ isomorphic to a graph in $\{P_1,P_2,P_3,K_3\}$, a proper coloring $c$ of $P$, and a $(4,2)$-list assignment $L$ of $G-P$ such that $c$ does not extend to an $L$-coloring of $G$.  We will refer to the vertices of $P$ as \emph{precolored vertices}.

\begin{claim}\label{clm:2conn}
$G$ is 2-connected.
\end{claim}

\begin{proof}
If $G$ is disconnected, then each connected component can be colored separately.
Suppose that $G$ has a cut-vertex $v$.
Then there exist connected subgraphs $G_1$ and $G_2$ where $G = G_1 \cup G_2$ and $V(G_1) \cap V(G_2) = \{v\}$, $n(G_1) < n(G)$, and $n(G_2) < n(G)$.
We can assume without loss of generality that $G_1$ contains at least one vertex of $P$, so let $S_1$ be the subgraph of $P$ contained in $G_1$.
If $G_2$ contains at least one vertex of $P$, i.e., if $v\in S_1$, then let $S_2$ be the subgraph of $P$ contained in $G_2$; otherwise, let $S_2$ be the vertex $v$.

Since $(G,P,L,c)$ is a minimal counterexample, there is an $L$-coloring $c_1$ of $G_1$ that extends the coloring on $S_1$.
Using the color prescribed by $c_1$ on $v$, there exists an $L$-coloring $c_2$ of $G_2$ that extends the coloring on $S_2$.
The colorings $c_1$ and $c_2$ form an $L$-coloring of $G$, a contradiction.
\end{proof}

\begin{claim}\label{clm:separating}
$G$ has no separating 3-cycles.
\end{claim}

\begin{proof}
Suppose that $P' = v_1v_2v_3$ is a separating 3-cycle of $G$.
Let $G_1$ be the subgraph of $G$ given by the exterior of $P'$ along with $P'$, and let $G_2$ be the subgraph of $G$ given by the interior of $P'$ along with $P'$.
Since $P'$ is separating, $n(G_1) < n(G)$ and $n(G_2) < n(G)$.

Since the vertices in $P$ share a common face, we can assume without loss of generality that $V(P) \subseteq V(G_1)$.
Since $(G,P,L,c)$ is a minimal counterexample, there exists an $L$-coloring $c_1$ of $G_1$.
Assign the colors from $c_1$ to $P'$.
Then there exists an $L$-coloring of $G_2$ extending the colors on $P'$, and together $c_1$ and $c_2$ form an $L$-coloring of $G$, a contradiction.
\end{proof}

\begin{claim}\label{clm:precoloredtriangles}
If $v \in V(P)$ such that $V(P) \subseteq N[v]$, then the subgraph of $G$ induced by $N(v)$ is not isomorphic to any graph in $\{P_1, P_2, P_3, K_3\}$.
\end{claim}

\begin{proof}
Suppose that there exists a vertex $v \in V(P)$ where all precolored vertices are in $N[v]$ and the subgraph $G[N(v)]$ is isomorphic to a subgraph in $\{P_1,P_2,P_3,K_3\}$.
Then consider the graph $G' = G-v$ with subgraph $P' = G[N(v)]$.
Since $|N_G[v]| \leq 4$, there exists an $L$-coloring $c'$ of $G[N[v]]$.
Since $(G,P,L,c)$ is a minimal counterexample, $c'$ extends to an $L$-coloring of $G'$, which in turn extends to an $L$-coloring of $G$, a contradiction.
\end{proof}

\begin{claim}\label{clm:precoloreddegree}
If $v \in V(P)$ has $d_G(v) \leq 2$, then $d_G(v) = 2$ and $P$ is isomorphic to $P_1$, $P_2$, or $P_3$.
\end{claim}

\begin{proof}
By Claim~\ref{clm:2conn}, $d_G(v) \neq 1$.
If $d_G(v) = 2$ and $P \cong K_3$, then $G[N_G(v)]$ is isomorphic to $P_2$, contradicting Claim~\ref{clm:precoloredtriangles}.
\end{proof}

\begin{claim}\label{clm:bigsubgraph}
$P$ is isomorphic to one of $P_3$ or $K_3$.
\end{claim}

\begin{proof}
Suppose that $P$ is not isomorphic to either $P_3$ or $K_3$.
If $P$ is isomorphic to $P_1$, then the vertex of $P$ has two consecutive neighbors $u_1$ and $u_2$ not in $P$; let $U = \{u_1,u_2\}$.  
If $P$ is isomorphic to $P_2$, then some vertex $v$ in $P$ has a neighbor $u_1$ not in $P$ that shares a face with the edge in $P$; let $U = \{u_1\}$.
Let $P'$ be the subgraph isomorphic to $P_3$ or $K_3$ given by including vertices in $U$.
There exists a proper coloring $c'$ of $P'$ that extends the coloring on $P$.  But then $(G, P', L, c')$ has $n(G) - \frac{1}{4}n(P') < n(G) - \frac{1}{4}n(P)$, so there exists an $L$-coloring of $G$ that extends $c'$, a contradiction.
\end{proof}

\begin{claim}\label{clm:degreefour}
If $v \in V(G-P)$, then $d_G(v) \geq 4$.
\end{claim}

\begin{proof}
Suppose that $v \in V(G-P)$ has degree $d(v) \leq 3$.
Then $G-v$ is a planar graph with no chorded 7-cycle containing a precolored subgraph $P$ and a list assignment $L$.
Since $(G, P, L ,c)$ is a minimum counterexample, $G-v$ has an $L$-coloring.
However, $v$ has at most three neighbors and at least four colors in the list $L(v)$.
Thus, there is an extension of the $L$-coloring of $G-v$ to an $L$-coloring of $G$, a contradiction.
\end{proof}

Observe that $n(G) \geq 4$. 
Recall that in a configuration $(C,X,\ex)$, an $L$-coloring of $V(C)\setminus X$ extends to all of $C$.
Because of this fact, if $G$ contains a reducible configuration $(C,X,\ex)$, then there is a precolored vertex in the set $X$, or else $G-X$ has an $L$-coloring that extends to all of $G$.
Specifically, we will use the fact that $G$ avoids \ref{cycle6}, \ref{diamond1}, \ref{diamond2}, \ref{4fan}, \ref{K6hconfig1}, \ref{K6hconfig2}, and \ref{K6hconfig3}.

For each $v \in V(G)$ and $f \in F(G)$ define 
\[
\hs \mu(v) = d(v)-4 + 2\delta(v) \quad \text{ and } \quad \nu(f) = \ell(f) - 4+ \varepsilon(f),
\]
 where $\delta(v) \in \{0,1\}$ has value 1 if and only if $v \in V(P)$, and $\varepsilon(f) \in \{0,1\}$ has value 1 if and only if the boundary of $f$ is the set of precolored vertices, $V(P)$.
By Euler's Formula, the initial charge sum is at most $-1$.
Claims~\ref{clm:precoloreddegree} and \ref{clm:degreefour} assert that the only negatively-charged objects are 3-faces.

For a vertex $v$, let $t_k(v)$ denote the number of $k$-faces incident to $v$.
Apply the following discharging rules.
Let $\mu_i(v)$ and $\nu_i(f)$ denote the charge on a vertex $v$ or a face $f$ after rule (R$i$).

\begin{figure}[htp]
\def\sixcyclerulescale{0.75}
\begin{center}
\begin{tabular}[h]{WWW}
\begin{tikzpicture}[scale=\sixcyclerulescale,vtx/.style={shape=coordinate}]
	\node[vtx] (a) at (0,0) {};
	\node[vtx] (b) at (180:3) {};
	\node[vtx] (c) at ($(b) !1! 60:(a)$) {};
	\foreach \p in {a,b,c}
	{
		\fill (\p) circle (2pt);
	}
	\draw (a) -- (b) -- (c) -- (a) -- ($(a) !0.4! 180:(b)$);
	\draw (c) -- ($(c) !0.4! 165:(b)$);
	\begin{scope}[>=latex,very thick,densely dashed]
		\def\mytheta{25}
		\draw[->] ($(a) !1/(2*cos(\mytheta))! -\mytheta:(c)$) -- ($(a) !1/(2*cos(\mytheta))! \mytheta:(c)$);
		\node at ($(a) !0.69! -17:(c)$) {$\frac{3}{8}$};
	\end{scope}
	\node at ($(b) !0.4! 32:(a)$) {$f$};
	\node at ($(b) !1.3! 30:(a)$) {$g$};
	\node at ($(b) !0.8! 15:(a)$) {$e$};
\end{tikzpicture}
&
\begin{tikzpicture}[scale=\sixcyclerulescale,vtx/.style={shape=coordinate}]
	\node[vtx] (a) at (0,0) {};
	\node[vtx] (b) at (180:3) {};
	\node[vtx, label=left:$v$] (v) at ($(b) !1! 60:(a)$) {};
	\foreach \p in {a,b,v}
	{
		\fill (\p) circle (2pt);
	}
	\draw (a) -- (b) -- (v) -- (a);
	\draw (v) -- +(45:1) -- (v) -- +(90:1) -- (v) -- +(135:1);
	\begin{scope}[>=latex,very thick,densely dashed]
		\def\mytheta{30}
		\node[vtx] (tip) at ($(v) !0.5! ($(b) !0.5! (a)$)$) {};
		\draw[->] (v) -- (tip);
		\node at ($(v) !0.9! 10:(tip)$) {$\frac{1}{3}$};
	\end{scope}
	\node at ($(v) !1.5! (tip)$) {$f$};
\end{tikzpicture}
&
\begin{tikzpicture}[scale=\sixcyclerulescale,vtx/.style={shape=coordinate}]
	\node[vtx] (a) at (0,0) {};
	\node[vtx] (b) at (180:3) {};
	\node[vtx, label=left:$v$] (v) at ($(b) !1! 60:(a)$) {};
	\foreach \p in {a,b,v}
	{
		\fill (\p) circle (2pt);
	}
	\draw (a) -- (b) -- (v) -- (a);
	\draw (v) -- +(45:1) -- (v) -- +(65:1) -- (v) -- +(115:1) -- (v) -- +(+(135:1);
	\draw[dashed] (v) -- +(90:1);
	\begin{scope}[>=latex,very thick,densely dashed]
		\def\mytheta{30}
		\node[vtx] (tip) at ($(v) !0.5! ($(b) !0.5! (a)$)$) {};
		\draw[->] (v) -- (tip);
		\node at ($(v) !0.9! 10:(tip)$) {$\frac{4}{9}$};
	\end{scope}
	\node at ($(v) !1.5! (tip)$) {$f$};
\end{tikzpicture}\\
(R1a) &
(R2a) &
(R2b) \\
\begin{tikzpicture}[scale=\sixcyclerulescale,vtx/.style={shape=coordinate}]
	\node[vtx] (a) at (0,0) {};
	\node[vtx] (b) at (180:3) {};
	\node[vtx] (c) at ($(b) !1! 90:(a)$) {};
	\node[vtx] (d) at ($(c) !1! 90:(b)$) {};
	\node[vtx] (e) at ($(c) !1/(2*cos(45))! 45:(d)$) {};
	\foreach \p in {a,b,c,d,e}
	{
		\fill (\p) circle (2pt);
	}
	\draw (a) -- (b) -- (c) -- (d) -- (e) -- (c) -- (d) -- (a);
	\draw (a) -- +(-30:1) -- (a) -- +(-60:1);
	\draw (b) -- +(-120:1) -- (b) -- +(210:1);
	\draw (c) -- +(150:1);
	\draw (d) -- +(30:1);
	\begin{scope}[>=latex,very thick,densely dashed]
		\def\mytheta{25}
		\draw[->] ($(b) !1/(2*cos(\mytheta))! -\mytheta:(a)$) -- ($(c) !1/(2*cos(\mytheta-10))! \mytheta-10:(d)$);
		\node at ($(b) !0.6! -15:(a)$) {$\frac{1}{8}$};
		\draw[->] ($(c) !1/(2*cos(\mytheta))! -\mytheta:(b)$) .. controls ($(c) !1/(2*cos(\mytheta))! \mytheta:(b)$) .. ($(c) !1/(4*cos(\mytheta))! \mytheta:(d)$);
		\node at ($(c) !0.35! -17:(b)$) {$\frac{1}{8}$};
		\draw[->] ($(a) !1/(2*cos(\mytheta))! -\mytheta:(d)$) .. controls ($(a) !1/(2*cos(\mytheta))! \mytheta:(d)$) .. ($(d) !1/(4*cos(\mytheta))! -\mytheta:(c)$);
		\node at ($(a) !0.7! -12:(d)$) {$\frac{1}{8}$};
	\end{scope}
	\node at ($(e) !0.3! -45:(d)$) {$f$};
	\node at ($(b) !0.8! 35:(a)$) {$g$};
	\node at ($(d) !0.4! 11:(c)$) {$e$};
	\node at ($(a) !0.3! -45:(d)$) {$h_1$};
	\node at ($(a) !0.4! 12:(d)$) {$e_1$};
	\node at ($(b) !0.3! -45:(a)$) {$h_2$};
	\node at ($(b) !0.37! 12:(a)$) {$e_2$};
	\node at ($(b) !0.3! 45:(c)$) {$h_3$};
	\node at ($(b) !0.4! -14:(c)$) {$e_3$};
\end{tikzpicture}
&
\begin{tikzpicture}[scale=\sixcyclerulescale,vtx/.style={shape=coordinate}]
	\node[vtx] (a) at (0,0) {};
	\node[vtx] (b) at (180:3) {};
	\node[vtx] (c) at ($(b) !1! 90:(a)$) {};
	\node[vtx] (d) at ($(c) !1! 90:(b)$) {};
	\node[vtx] (e) at ($(c) !1/(2*cos(45))! 45:(d)$) {};
	\node[vtx] (f) at ($(b) !1/(2*cos(45))! 45:(c)$) {};
	\foreach \p in {a,b,c,d,e, f}
	{
		\fill (\p) circle (2pt);
	}
	\draw (a) -- (b) -- (c) -- (d) -- (e) -- (c) -- (d) -- (a);
	\draw (a) -- +(-30:1) -- (a) -- +(-60:1);
	\draw (b) -- +(-120:1) -- (b) -- +(210:1);
	\draw (b) -- (f) -- (c);
	\draw (c) -- +(150:1);
	\draw (d) -- +(30:1);
	\begin{scope}[>=latex,very thick,densely dashed]
		\def\mytheta{25}
		\draw[->] ($(b) !1/(2*cos(\mytheta))! -\mytheta:(a)$) -- ($(c) !1/(2*cos(\mytheta-10))! \mytheta-10:(d)$);
		\node at ($(b) !0.6! -15:(a)$) {$\frac{3}{16}$};
		\draw[->] ($(a) !1/(2*cos(\mytheta))! -\mytheta:(d)$) .. controls ($(a) !1/(2*cos(\mytheta))! \mytheta:(d)$) .. ($(d) !1/(4*cos(\mytheta))! -\mytheta:(c)$);
		\node at ($(a) !0.7! -12:(d)$) {$\frac{3}{16}$};
	\end{scope}
	\node at ($(e) !0.3! -45:(d)$) {$f$};
	\node at ($(b) !0.8! 35:(a)$) {$g$};
	\node at ($(d) !0.4! 11:(c)$) {$e$};
	\node at ($(a) !0.3! -45:(d)$) {$h_1$};
	\node at ($(a) !0.4! 12:(d)$) {$e_1$};
	\node at ($(b) !0.3! -45:(a)$) {$h_2$};
	\node at ($(b) !0.37! 12:(a)$) {$e_2$};
	\node at ($(f) + (1,0)$) {$f_2$};
\end{tikzpicture}
&
\begin{tikzpicture}[scale=\sixcyclerulescale,vtx/.style={shape=coordinate}]
	\node[vtx] (a) at (0,0) {};
	\node[vtx] (b) at (180:3) {};
	\node[vtx] (c) at ($(b) !1! 90:(a)$) {};
	\node[vtx] (d) at ($(c) !1! 90:(b)$) {};
	\node[vtx] (e) at ($(c) !1/(2*cos(45))! 45:(d)$) {};
	\node[vtx] (f) at ($(a) !1/(2*cos(45))! 45:(b)$) {};
	\foreach \p in {a,b,c,d,e, f}
	{
		\fill (\p) circle (2pt);
	}
	\draw (a) -- (b) -- (c) -- (d) -- (e) -- (c) -- (d) -- (a);
	\draw (a) -- +(-30:1) -- (a) -- +(-60:1);
	\draw (b) -- +(-120:1) -- (b) -- +(210:1);
	\draw (c) -- +(150:1);
	\draw (d) -- +(30:1);
	\draw (a) -- (f) -- (b);
	\begin{scope}[>=latex,very thick,densely dashed]
		\def\mytheta{25}
		\draw[->] ($(c) !1/(2*cos(\mytheta))! -\mytheta:(b)$) .. controls ($(c) !1/(2*cos(\mytheta))! \mytheta:(b)$) .. ($(c) !1/(4*cos(\mytheta))! \mytheta:(d)$);
		\node at ($(c) !0.35! -17:(b)$) {$\frac{3}{16}$};
		\draw[->] ($(a) !1/(2*cos(\mytheta))! -\mytheta:(d)$) .. controls ($(a) !1/(2*cos(\mytheta))! \mytheta:(d)$) .. ($(d) !1/(4*cos(\mytheta))! -\mytheta:(c)$);
		\node at ($(a) !0.7! -12:(d)$) {$\frac{3}{16}$};
	\end{scope}
	\node at ($(e) !0.3! -45:(d)$) {$f$};
	\node at ($(b) !0.8! 35:(a)$) {$g$};
	\node at ($(d) !0.4! 11:(c)$) {$e$};
	\node at ($(a) !0.3! -45:(d)$) {$h_1$};
	\node at ($(a) !0.4! 12:(d)$) {$e_1$};
	\node at ($(b) !0.3! 45:(c)$) {$h_2$};
	\node at ($(b) !0.4! -14:(c)$) {$e_2$};
	\node at ($(f) + (0,1)$) {$f_2$};
\end{tikzpicture}
\\
(R1b)
&
(R1c), Case 1
&
(R1c), Case 2
\end{tabular}
\end{center}
\caption{Discharging rules (R1) and (R2) in the proof of Theorem~\ref{thm:cc7}.}\label{fig-rules7}
\end{figure}
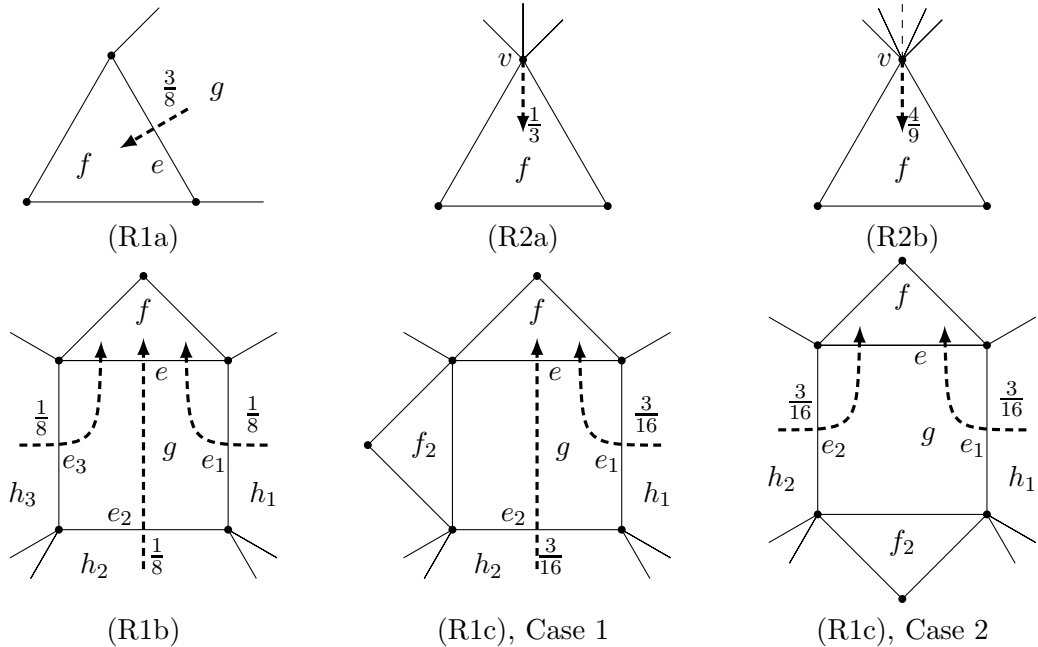

\begin{itemize}
\item[(R0)] If $v$ is a precolored vertex and $f$ is an incident 3-face with negative charge, then $v$ sends charge $\frac{1}{2}$ to $f$.

\item[(R1)] If $f$ is a 3-face and $e$ is an incident edge, then let $g$ be the face adjacent to $f$ across $e$.

\begin{itemize}
\item[(R1a)] If $g$ is a $5^+$-face, then $f$ pulls charge $\frac{3}{8}$ from $g$ ``through'' the edge $e$.

\item[(R1b)] If $g$ is a 4-face and $f$ is the only 3-face adjacent to $g$, then let $e_1$, $e_2$, and $e_3$ be the other edges incident to $g$.
For each $i \in \{1,2,3\}$, let $h_i$ be the face adjacent to $g$ across $e_i$. For each $i \in \{1,2,3\}$, the face $f$ pulls charge $\frac{1}{8}$ from the face $h_i$ ``through'' the edges $e$ and $e_i$.

\item[(R1c)] If $g$ is a 4-face and $g$ is adjacent to two 3-faces $f_1$ and $f_2$ (say $f_1 = f$), then let $e_1$ and $e_2$ be the other edges incident to $g$, where the faces $h_1$ and $h_2$ sharing these edges are $6^+$-faces.
For each $i \in \{1,2\}$, the face $f$ pulls charge $\frac{3}{16}$ from the face $h_i$ ``through'' the edges $e$ and $e_i$.
\end{itemize}

\item[(R2)] Let $v$ be a $5^+$-vertex with $v \notin V(P)$ and let $f$ be an incident 3-face.

\begin{itemize}
\item[(R2a)] If $v$ is a $5$-vertex, then $v$ sends charge $\frac{1}{a}$ to $f$, when $a = \max\{ 3, t_3(v) \}$.

\item[(R2b)] If $v$ is a $6^+$-vertex, then $v$ sends charge $\frac{1}{2}$ to $f$.
\end{itemize}

\item[(R3)]  If $f$ is a 6-face with $\nu_2(f) < 0$ and $v$ is an incident $5^+$-vertex or an incident vertex in $V(P)$ with $\mu_0(v) > 0$, then $v$ sends charge $\frac{1}{4}$ to $f$.
\end{itemize}

We claim that $\mu_3(v) \geq 0$ for every vertex $v$ and $\nu_3(f) \geq 0$ for every face $f$.
Since the total charge sum was preserved during the discharging rules, this contradicts the negative charge sum from the initial charge values.

Note that 6-faces are not incident to 3-faces since $G$ does not contain a chorded 7-cycle.
Observe that a 6-face $f$ has $\nu_1(f) < 0$ if and only if all faces adjacent to $f$ are 4-faces, and each of those 4-faces has two adjacent 3-faces.

\begin{claim}\label{clm:precolorednonnegative}
Let $v$ be a vertex in $V(P)$.
Then $\mu_3(v) \geq 0$.
In addition, if $v$ is incident to a $6$-face $f$ with $\nu_1(f) < 0$, then $\mu_0(v) > 0$.
\end{claim}

\begin{proof}
By Claims~\ref{clm:precoloreddegree} and \ref{clm:bigsubgraph}, we have $\mu(v) = d(v) - 2 \geq 0$.
Note that if $\mu(v) \geq \frac{1}{2}t_3(v) + \frac{1}{4}t_6(v)$, then the final charge $\mu_3(v)$ is nonnegative.
Since $d(v) \geq t_3(v) + t_6(v)$, it suffices to show that $\mu_0(v) \geq \frac{1}{4}d(v) + \frac{1}{4}t_3(v)$.

\begin{mycases}
\mycase{$P \cong P_3$.} Let $v_1$, $v_2$, and $v_3$ be the vertices in the 3-path $P$.
For $i\in \{1,2,3\}$, $\mu(v_i) = d(v_i) - 2$.
Since $P$ is not isomorphic to $K_3$, these vertices do not form a cycle, and the face to which all vertices are incident is not a 3-face.
Hence $t_3(v_i) \leq d(v_i) - 1$.
If $d(v_i) \geq 4$, then $\mu(v_i) = d(v_i) - 2 \geq \frac{1}{2}d(v_i ) > \frac{1}{4}d(v_i) + \frac{1}{4}t_3(v_i)$.

If $d(v_i) = 2$, then $\mu(v_i) = 0$.  
If $i = 2$, then $v_2$ is not incident to any 3-faces since $v_1$ and $v_3$ are not adjacent.
If $i\in\{1,3\}$ and $v_i$ is adjacent to a 3-face, then let $v_i'$ be the neighbor of $v_i$ not in $V(P)$.
Let $P'$ be the subgraph induced by $(V(P)  \cup \{v_i'\} )\setminus \{v_i\}$, which forms a copy of $P_3$ or $K_3$ in $G - v_i$.
For any color $c(v_i') \in L(v_i') \setminus\{c(v_i)\}$, there exists an $L$-coloring of $G - v_i$ as $(G-v_i,P',L,c)$ is not a counterexample; this coloring extends to an $L$-coloring of $G$.
Thus, $t_3(v_i) = 0$.
If $v_i$ is incident to a 6-face $f$ with $\nu_1(f) < 0$, then the other face incident to $v_i$ is a 4-face that is adjacent to two 3-faces. 
This results in a chorded 7-cycle, a contradiction; thus (R3) does not apply to $v_i$.

If $d(v_i) = 3$, Claim~\ref{clm:separating} asserts that $G$ has no separating 3-cycles, so then $v_i$ loses charge at most 1 in (R0).
If $v_i$ is incident to a 6-face $f$ with $\nu_1(f) < 0$, then the other two faces incident to $v_i$ are 4-faces and these 4-faces are each adjacent to two 3-faces. 
This creates a chorded 7-cycle, a contradiction, so (R3) does not apply to $v_i$ and $\mu_3(v_i) \geq 0$.

\mycase{$P \cong K_3$.} Let $v_1$, $v_2$, and $v_3$ be the vertices in the 3-cycle $P$, so $\mu(v_i) = d(v_i) - 2$ for each $v_i$.
By Claim~\ref{clm:separating}, $G$ has no separating 3-cycle, so the three vertices are incident to a common 3-face $f$ with $\nu(f) = 0$.
Therefore, each vertex $v_i$ sends charge $\frac{1}{2}$ to at most $d(v_i) - 1$ incident 3-faces by (R0).
Recall that $d(v_i) \geq 3$ by Claim~\ref{clm:precoloreddegree}.
Suppose that $d(v_i) = 3$. 
If $t_3(v_i) > 1$, the subgraph of $G$ induced by the neighborhood of $v_i$ is isomorphic to $P_3$ or $K_3$, contradicting Claim~\ref{clm:precoloredtriangles}.
If $d(v_i) \geq 4$, then $\mu(v_i) = d(v_i) - 2 \geq \frac{1}{2}d(v_i ) \geq \frac{1}{4}d(v_i) + \frac{1}{4}t_3(v_i)$.
Therefore, $\mu_3(v_i) \geq 0$.
\end{mycases}

Thus, in all cases a precolored vertex $v$ has $\mu_3(v) \geq 0$.
\end{proof}

We will now show that all objects that start with nonnegative charge also end with nonnegative charge.

If $f$ is a 4-face, then (R1b) and (R1c) do not pull charge from $f$, since this would require $f$ to be adjacent to a 4-face $g$ that is adjacent to a 3-face $t$, but then $f$, $g$, and $t$ form a doubly-chorded 7-cycle.
Thus, $\nu_3(f) = 0$ for every 4-face $f$.

If $f$ is a 5-face, then since $G$ contains no chorded 7-cycles, $f$ is not adjacent to two 3-faces and $f$ is not adjacent to a 4-face.
Therefore, $f$ loses charge at most $\frac{3}{8}$ by (R1a), but loses no charge using (R1b), so $\nu_3(f) > 0$ for every $5$-face $f$.

If $f$ is a 6-face, then $f$ is not adjacent to a 3-face since $G$ contains no chorded 7-cycle.
Observe that by Claim~\ref{clm:2conn} the boundary of $f$ is a simple 6-cycle.
So if $f$ sends charge through an edge $e$ during (R1), it can send charge $\frac{1}{8}$ through $e$ by (R1b), or it can send charge $\frac{3}{8}$ through $e$ by (R1c).
The only way that this will result in a negative charge after (R1) and (R2) is for $f$ to send charge $\frac{3}{8}$ through each of its six edges by (R1c); this will cause $\nu_2(f) = 2 - 6\cdot \frac{3}{8} = -\frac{1}{4}$.
If $f$ has a precolored vertex $v$ on its boundary, then by Claim~\ref{clm:precolorednonnegative}, $v$ has positive charge after (R0); by (R3), $f$ receives charge at least $\frac{1}{4}$, resulting in $\nu_3(f) \geq 0$.
If $f$ has no incident precolored vertices, then since $G$ contains no \ref{cycle6}, some vertex $v$ on the boundary of $f$ is a $5^+$-vertex.
By (R3) $v$ sends charge $\frac{1}{4}$ to $f$ and hence $\nu_3(f) \geq 0$.
Observe the following claim concerning the structure about a vertex that loses charge by (R3).

\begin{claim}
Let $v$ be a $5^+$-vertex with the three incident faces $f_1$, $f_2$, and $f_3$, in cyclic order.
If $v$ sends charge to $f_2$ by (R3), then $f_1$ and $f_3$ are 4-faces and $f_2$ is a 6-face.
\end{claim}

If $f$ is a $7^+$-face, then by (R1) $f$ loses charge at most $\frac{3}{8}$ through each edge.
Thus,
\[
	\nu_3(f) \geq \ell(f) - 4 - \frac{3}{8}\ell(f) = \frac{5}{8}\ell(f) - 4  >  0.
\]
Therefore, $\nu_3(f) > 0$ for every $7^+$-face $f$.

Next, we will consider a vertex $v$ not in $V(P)$.

If $v$ is a 4-vertex, then $v$ does not lose charge by any rule, so the resulting charge is $0$.

If $v$ is a 5-vertex,  let $a = \max \{ 3, t_3(v)\}$ and $v$ loses charge $\frac{1}{a}t_3(v)$ to incident 3-faces by (R2a).
If (R3) does not apply to $v$, then $v$ sends charge at most 1 to  incident 3-faces and $\mu_3(v) \geq 0$.
If (R3) applies to $v$, then $v$ is incident to faces $f_1$, $f_2$, and $f_3$ where $f_1$ and $f_3$ are 4-faces and $f_2$ is a 6-face.
Since $d(v) = 5$ and $G$ has no chorded 7-cycle, the rule (R3) applies at most once.
If (R3) applies once, then $t_3(v) \leq 2$ and $v$ loses charge at most $\frac{2}{3}$ by (R2) and charge $\frac{1}{4}$ by (R3), so $\mu_3(v) \geq 0$.

If $v$ is a $6^+$-vertex, then let $k=t_3(v)$ and $\ell$ be the number of times (R3) applies to $v$.  
Notice that $k \leq \lfloor\frac{4}{5}d(v)\rfloor$ since $G$ avoids chorded 7-cycles.
Further, notice that $k + 2\ell \leq d(v)$, since each 6-face that gains charge from $v$ by (R3) is preceded by a 4-face in the cyclic order of faces around $v$.
By (R2b), $v$ can lose charge $\frac{1}{2}$ to each incident 3-face, and $v$ can lose charge at most $\frac{1}{4}$ to each incident 6-face by (R3).  
Then $v$ ends with charge 
\[
    \mu_3(v) \geq d(v) - 4 - \frac{1}{2}k - \frac{1}{4}\ell.
\]
If $d(v) = 6$, then observe $k + \ell \leq 4$ and hence $\mu_3(v) \geq 0$.
If $d(v) = d\geq 7$, then $d$, $k$, and $\ell$ satisfy the following linear program with dual on variables $a_1$, $a_2$, and $a_3$:
\[
	\begin{array}[h]{rrcrcrcl}
	\min & d &-& \frac{1}{2}k&-&\frac{1}{4}\ell&&\\
	\text{s.t.} & d && && &\geq& 7\\
	  	       & 4d &-&5k && &\geq&0\\
		       & d & - &k & - & 2\ell & \geq & 0\\
		       & d, && k,&&\ell&\geq &0
	\end{array}
	\qquad
	\begin{array}[h]{rrcrcrcl}
	\max & 7a_1\\
	\text{s.t.} & a_1 &+&5a_2 &+&a_3 &\leq& 1\\
	  	       &  &-&5a_2 &-&a_3 &\leq&-\frac{1}{2}\\
		       &  &  & & - & 2a_3 & \leq & -\frac{1}{4}\\
		       & a_1, && a_2,&&a_3&\geq &0
	\end{array}
\]
The dual-feasible solution $(a_1,a_2,a_3) = \left(\frac{23}{40}, \frac{1}{20}, \frac{1}{4}\right)$ demonstrates that $d - \frac{1}{2}k - \frac{1}{4}\ell \geq 7\cdot \frac{23}{40}> 4$, and thus $\mu_3(v) > 0$ for every $7^+$-vertex.

It remains to be shown that the clusters receive enough charge to become nonnegative.
Since $G$ contains no separating 3-cycle, $G$ does not contain the cluster (K5c) or the clusters (K6g)--(K6r).
Observe that there is no precolored vertex $v$ of degree at most three where all faces incident to $v$ have length three.
Finally, it is worth noting again that if $G$ contains a reducible configuration $(C,X,\ex)$, then there is a precolored vertex in the set $X$.

If a vertex $v$ is a $5^+$-vertex or $v \in V(P)$, we say $v$ is \emph{full}; if $v$ is a $6^+$-vertex or $v \in V(P)$, then $v$ is \emph{heavy}.
Note that a heavy vertex $v$ sends charge $\frac{1}{2}$ to each incident negatively-charged 3-face by (R0) or (R2b).
If $P \cong K_3$, we call $P$ the precolored face.

\begin{figure}[h]
\centering
\begin{tabular}[h]{VVV}
	\begin{tikzpicture}
	\node[shape=coordinate] (a) at (0,0) {};
	\node[shape=coordinate] (b) at (0:1) {};
	\node[shape=coordinate] (c) at (60:1) {};

	\foreach \p in {a,b,c}
	{
		\fill (\p) circle (2pt);
	}

	\draw (a) -- (b) -- (c) -- (a);
	
	\node at (35:0.57) {$f$};
\end{tikzpicture}
	&
	\begin{tikzpicture}[scale=1]
	\node[shape=coordinate] (a) at (180:1) {};
	\node[shape=coordinate, label=above:$v$] (b) at (120:1) {};
	\node[shape=coordinate] (c) at (0,0) {};
	\node[shape=coordinate] (d) at (60:1) {};

	\foreach \p in {a,b,c,d}
	{
		\fill (\p) circle (2pt);
	}

	\draw (a) -- (b) -- (c) -- (a);
	\draw (c) -- (d) -- (b);

	\node at (147:0.58) {$f_1$};
	\node at (90:0.57) {$f_2$};
\end{tikzpicture}
	&
	\begin{tikzpicture}[scale=1]
	\node[shape=coordinate] (a) at (180:1) {};
	\node[shape=coordinate] (b) at (120:1) {};
	\node[shape=coordinate] (c) at (0,0) {};
	\node[shape=coordinate] (d) at (60:1) {};
	\node[shape=coordinate] (e) at (0:1) {};

	\foreach \p in {a,b,c,d,e}
	{
		\fill (\p) circle (2pt);
	}

	\draw (a) -- (b) -- (c) -- (d) -- (e) -- (c) -- (a);
	\draw (d) -- (b);

	\node at (150:0.57) {$f_1$};
	\node at (90:0.57) {$f_2$};
	\node at (30:0.57) {$f_3$};

\end{tikzpicture}
	\\
	(K3) & (K4) & (K5a)
\end{tabular}
\caption{Clusters (K3), (K4), and (K5a)}
\end{figure}
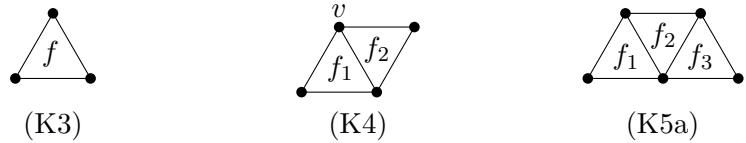
  
  \vspace{-2em}
\begin{mycases}
\mycase{(K3)} 
Let $f$ be the isolated 3-face in (K3).  
If $f$ is the precolored face, then $\nu_3(f) = \nu(f) = 0$. 
Otherwise, the initial charge on $f$ is $-1.$ 
By (R1), $f$ receives charge $\frac{9}{8}$ through its boundary edges, resulting in a nonnegative final charge.

\mycase{(K4)} Let $f_1$ and $f_2$ be 3-faces in a diamond cluster (K4). 
First, suppose without loss of generality that $f_1$ is the precolored face. 
The initial charge of the cluster is $-1$. 
Then $f_2$ receives charge $1$ by (R0) and charge $2 \cdot \frac{3}{8}$ by (R1), resulting in a positive final charge. 
Otherwise, the initial charge on the cluster is $-2.$ 
By (R1), $f_1$ and $f_2$ receive charge $\frac{3}{8}$ through each of the two edges on the boundary of the cluster, resulting in charge $-\frac{1}{2}.$ 
If the cluster contains a precolored vertex $u$, then it receives charge $\frac{1}{2}$ by (R0). 
Otherwise, since G contains no \ref{diamond1}, there is a $5^+$ -vertex $v$ incident to both $f_1$ and $f_2.$ 
By (R2), this vertex sends charge at least $\frac{1}{3}$ to each of the faces, resulting in a nonnegative final charge.


\mycase{(K5a)} Let $f_1$, $f_2$, and $f_3$ be 3-faces in a 3-fan cluster (K5a), where $f_2$ is adjacent to both $f_1$ and $f_3$. 
Suppose that the cluster contains a precolored face, so the initial charge on the cluster is $-2$. 
If $f_2$ is precolored, then the cluster receives charge $4 \cdot \frac{1}{2}$ by (R0); if $f_1$ or $f_3$ is precolored, then the cluster receives charge $3\cdot \frac{1}{2}$ by (R0) and charge $3 \cdot\frac{3}{8}$ by (R1). 
In either case, the final charge is nonnegative. 

If $P\not\cong K_3$ or the cluster does not contain the precolored face, then the initial charge on the cluster is $-3.$  
By (R1), the cluster receives charge $5\cdot \frac{3}{8},$ resulting in charge $-\frac{9}{8}$. 
Note that the faces $f_1$ and $f_2$ form a diamond and the faces $f_2$ and $f_3$ form a diamond.  
Since $G$ contains no \ref{diamond1}, there exists a full vertex $v$ incident to both $f_1$ and $f_2$. 
Similarly, there exists a full vertex $u$ incident to $f_2$ and $f_3.$ 
If $u \neq v,$ then by (R0) or (R2), $v$ sends charge at least $\frac{1}{3}$ to each of $f_1$ and $f_2$ and $u$ sends charge at least $\frac{1}{3}$ to each of $f_2$ and $f_3$, resulting in nonnegative charge on the cluster. 
If $u = v$ and $v$ is a heavy vertex, then $v$ sends charge $\frac{1}{2}$ to each face $f_1$, $f_2$, and $f_3$, resulting in nonnegative charge on the cluster. 
Otherwise, suppose that $u = v \notin V(P)$ and $v$ is a 5-vertex. 
Since $G$ contains no \ref{diamond2}, there exists another full vertex $w$ that is incident to at least one of $f_1$ and $f_2$. 
By (R2a), $v$ sends charge $\frac{1}{3}$ to $f_1$, $f_2,$ and $f_3$, and by (R0) or (R2), $w$ sends charge at least $\frac{1}{3}$ to one of $f_1$ and $f_2,$ resulting in nonnegative charge on the cluster.

\begin{figure}[h]
\centering
\begin{tabular}[h]{VVV}
	\begin{tikzpicture}[scale=0.66]
	\node[shape=coordinate, label=above:$v$] (a) at (0,0) {};	
	\node[shape=coordinate, label=above:$u_2$] (b) at (45:1.5) {};
	\node[shape=coordinate, label=above:$u_1$] (c) at (135:1.5) {};
	\node[shape=coordinate, label=below:$u_4$] (d) at (-135:1.5) {};
	\node[shape=coordinate, label=below:$u_3$] (e) at (-45:1.5) {};

	\foreach \p in {a,b,c,d,e}
	{
		\fill (\p) circle (2pt);
	}

	\draw (a) -- (b) -- (c) -- (d) -- (e) -- (b);
	\draw (c) -- (a) -- (d);
	\draw (a) -- (e);

	\node at (90:0.75) {$f_1$};
	\node at (0:0.75) {$f_2$};
	\node at (-90:0.7) {$f_3$};
	\node at (180:0.75) {$f_4$};
	
\end{tikzpicture}
	&
	\begin{tikzpicture}[scale=1]
	\node[shape=coordinate] (a) at (180:1) {};	
	\node[shape=coordinate] (b) at (120:1) {};
	\node[shape=coordinate] (c) at (0,0) {};
	\node[shape=coordinate] (d) at (60:1) {};
	\node[shape=coordinate] (e) at (0:1) {};
	\node[shape=coordinate] (f) at (30:1.732) {};

	\foreach \p in {a,b,c,d,e,f}
	{
		\fill (\p) circle (2pt);
	}

	\draw (a) -- (b) -- (c) -- (d) -- (e) -- (f) -- (d) -- (b);
	\draw (a) -- (c) -- (e);

	\node at (150:0.57) {$f_1$};
	\node at (90:0.57) {$f_2$};
	\node at (30:0.57) {$f_3$};
	\node at (30:1.15) {$f_4$};

\end{tikzpicture} 
	&
	\begin{tikzpicture}[scale=1]
	\node[shape=coordinate, label=below right:$v$] (a) at (0,0) {};
	\node[shape=coordinate, label=below:$u_5$] (b) at (0:1) {};
	\node[shape=coordinate, label=right:$u_4$] (c) at (60:1) {};
	\node[shape=coordinate, label=left:$u_3$] (d) at (120:1) {};
	\node[shape=coordinate, label=left:$u_2$] (e) at (180:1) {};
	\node[shape=coordinate, label=right:$u_1$] (f) at (240:1) {};

	\foreach \p in {a,b,c,d,e,f}
	{
		\fill (\p) circle (2pt);
	}

	\draw (a) -- (b) -- (c) -- (d) -- (e) -- (f) -- (a) -- (c);
	\draw (d) -- (a) -- (e);

	\node at (213:0.57) {$f_1$};
	\node at (150:0.57) {$f_2$};
	\node at (90:0.57) {$f_3$};
	\node at (30:0.57) {$f_4$};

\end{tikzpicture} 
	\\
	(K5b) & (K6a) & (K6b)
\end{tabular}
\caption{Clusters (K5b), (K6a), and (K6b)}
\end{figure}

\mycase{(K5b)} Let $f_1$, $f_2$, $f_3$, and $f_4$ be 3-faces in a 4-wheel (K5b).
If the cluster contains a precolored face, then the initial charge on the cluster is $-3$; the cluster receives charge $5 \cdot \frac{1}{2}$ by (R0) and charge $3 \cdot \frac{3}{8}$ by (R1), resulting in a positive final charge.
Otherwise, the initial charge on this cluster is $-4$. 
By (R1), the cluster receives charge $4\cdot \frac{3}{8}$, resulting in charge $-\frac{5}{2}$.
Let $v$ be the 4-vertex incident to all four 3-faces. 
Let $u_1$, $u_2$, $u_3$, and $u_4$ be the vertices adjacent to $v$, ordered cyclically such that $vu_iu_{i+1}$ is the boundary of the 3-face $f_i$ for $i\in \{1,2,3\}$ and $vu_4u_1$ is the boundary of $f_4$.  
Since the cluster does not contain the precolored face, $v$ is not a precolored vertex.
Since $G$ contains no \ref{diamond1}, each $u_i$ is a full vertex.
When $u_i$ is a 5-vertex, it is incident to two $7^+$-faces, so $u_i$ sends charge $\frac{1}{3}$ to each incident 3-face by (R2).
Thus, each $u_i$ sends charge at least $2\cdot \frac{1}{3}$ to the cluster by (R0) or (R2), resulting in a nonnegative final charge.

\mycase{(K6a)} Let $f_1$, $f_2$, $f_3$, and $f_4$ be 3-faces in a 4-strip cluster (K6a).
If the cluster contains the precolored face, then the initial charge on the cluster is $-3$.
If $f_1$ or $f_4$ is precolored, then the cluster receives charge $3 \cdot \frac{1}{2}$ by (R0) and charge $4 \cdot \frac{3}{8}$ by (R1);
if $f_2$ or $f_3$ is precolored, then the cluster receives charge $5\cdot \frac{1}{2}$ by (R0) and charge $5\cdot \frac{3}{8}$ by (R1). 
In either case, the resulting final charge is nonnegative.  
If the cluster does not contain the precolored face, then the initial charge on this cluster is $-4$. 
By (R1), the cluster receives charge $6 \cdot \frac{3}{8}$, resulting in charge $-\frac{7}{4}$.
Note that for $i \in \{ 1, 2, 3\}$, the faces $f_i$ and $f_{i+1}$ form a diamond.  
Since $G$ contains no \ref{diamond1}, there exists a full vertex $v$ incident to both $f_i$ and $f_{i+1}$.
Let $u_1$ be a full vertex incident to $f_2$ and $f_3$. 
Without loss of generality, $u_1$ is not incident to $f_4$, so there is a full vertex $u_2$ incident to $f_1$ and $f_2$.
If $u_1$ is a heavy vertex, the cluster receives charge $3 \cdot \frac{1}{2}$ from $u_1$ by (R0) or (R2b), and charge at least $2\cdot \frac{1}{3}$ from $u_2$ by (R0) or (R2), resulting in a positive final charge. 
Otherwise, $u_1$ is a $5$-vertex, so $u_1$ sends charge $3 \cdot \frac{1}{3}$ by (R2a), resulting in charge $-\frac{3}{4}$. 
If $u_2$ is incident to $f_3$, then $u_2$ sends charge at least $3 \cdot \frac{1}{3}$ by (R0) or (R2), resulting in a positive final charge. 
Otherwise, $u_2$ is incident with $f_1$ and $f_2$ but not $f_3$. 
If $u_2$ is a large vertex, it sends charge $2 \cdot \frac{1}{2}$ by (R0) or (R2b).
Otherwise, since $G$ contains neither a \ref{diamond1} or a \ref{diamond2}, there is a third full vertex $u_3$.  
The cluster receives charge $2 \cdot \frac{1}{3}$ from $u_2$ by (R2a) and charge at least $\frac{1}{3}$ from $u_3$ by (R0) or (R2). 
In each case, the resulting final charge is nonnegative.

\mycase{(K6b)} Let $f_1$, $f_2$, $f_3$, and $f_4$ be 3-faces in a 4-fan cluster (K6b).
Let $v$ be the center of the fan, with neighbors $u_1$, $u_2$, $u_3$, $u_4$, and $u_5$ where for $i\in \{1,2,3\}$, $f_i$ and $f_{i+1}$ are adjacent on the edge $vu_{i+1}$.
If the cluster contains the precolored face, then the initial charge on the cluster is $-3$.
If $f_1$ or $f_4$ is precolored, then the cluster receives charge $4 \cdot \frac{1}{2}$ by (R0) and charge $4 \cdot \frac{3}{8}$ by (R1);
if $f_2$ or $f_3$ is precolored, then the cluster receives charge $5\cdot \frac{1}{2}$ by (R0) and charge $5\cdot \frac{3}{8}$ by (R1). 
In either case, the resulting final charge is positive.

If the cluster does not contain the precolored face, then the initial charge on this cluster is $-4$.
By (R1), the cluster receives charge $6 \cdot \frac{3}{8}$, resulting in charge $-\frac{7}{4}$.
If $v$ is a heavy vertex, then by (R0) or (R2b) $v$ sends charge $4\cdot \frac{1}{2}$ to the cluster, resulting in positive charge.
Otherwise, $v \notin V(P)$ and $v$ is a 5-vertex, so $v$ sends charge $1$ to the cluster by (R2a), resulting in charge $-\frac{3}{4}$.
If there is a heavy vertex in $\{u_2, u_3, u_4\}$, then that vertex contributes charge $2 \cdot \frac{1}{2}$ to the cluster, resulting in a positive charge.
If there is no heavy vertex in $\{u_2, u_3, u_4\}$, then there is at least one $5$-vertex in $\{u_2, u_3, u_4\}$ since $G$ contains no \ref{diamond2}.  
If there are multiple $5$-vertices in $\{u_2,u_3,u_4\}$, then each sends charge $2 \cdot \frac{1}{3}$ to the cluster by (R2a), resulting in positive charge.
If there is only $5$-vertex $w$ among $u_2, u_3$, and $u_4$, then there is a full vertex $z \in \{u_1,u_5\}$ since $G$ does not contain \ref{diamond2} or \ref{4fan}; the cluster receives charge $2\cdot \frac{1}{3}$ from $w$ by (R2a) and at least $\frac{1}{3}$ from $z$ by (R0) or (R2), resulting in positive final charge.

\begin{figure}[h]
\centering
\begin{tabular}[h]{VV}
	\begin{tikzpicture}
	\node[shape=coordinate, label=below:$u_5$] (a) at (180:1) {};	
	\node[shape=coordinate, label=left:$u_6$] (b) at (120:1) {};
	\node[shape=coordinate, label=below:$u_4$] (c) at (0,0) {};
	\node[shape=coordinate, label=right:$u_2$] (d) at (60:1) {};
	\node[shape=coordinate, label=below:$u_3$] (e) at (0:1) {};
	\node[shape=coordinate, label=right:$u_1$] (f) at (90:1.732) {};

	\foreach \p in {a,b,c,d,e,f}
	{
		\fill (\p) circle (2pt);
	}

	\draw (c) -- (a) -- (b) -- (c) -- (d) -- (b) -- (f) --(d) -- (e) -- (c);

	\node at (90:1.2) {$f_1$};
	\node at (30:0.57) {$f_2$};
	\node at (150:0.57) {$f_3$};
	\node at (90:0.57) {$f_4$};

\end{tikzpicture}
	&
	\begin{tikzpicture}[scale=0.75]
	\node[shape=coordinate, label=above:$v$] (a) at (0,0) {};	
	\node[shape=coordinate, label=above:$u_3$] (b) at (45:1.5) {};
	\node[shape=coordinate, label=above:$u_2$] (c) at (135:1.5) {};
	\node[shape=coordinate, label=below:$u_1$] (d) at (-135:1.5) {};
	\node[shape=coordinate, label=below:$u_4$] (e) at (-45:1.5) {};
	\node[shape=coordinate, label=above:$w$] (f) at (0:2.12132) {};

	\foreach \p in {a,b,c,d,e,f}
	{
		\fill (\p) circle (2pt);
	}

	\draw (a) -- (b) -- (c) -- (d) -- (e) -- (b);
	\draw (c) -- (a) -- (d);
	\draw (a) -- (e);
	\draw (b) -- (f) -- (e);

	\node at (-90:0.6) {$f_1$};
	\node at (180:0.7) {$f_2$};
	\node at (90:0.65) {$f_3$};
	\node at (0:0.7) {$f_4$};
	\node at (0:1.45) {$g$};

\end{tikzpicture}
	\\
	(K6c) & (K6d) 
\end{tabular}
\caption{Clusters (K6c) and (K6d).}
\end{figure}
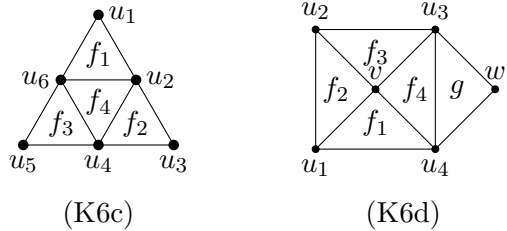

\mycase{(K6c)} Let $f_1$, $f_2$, $f_3$, and $f_4$ be the 3-faces of this cluster (K6c) where $f_4$ is adjacent to each $f_i$ for $i \in \{1,2,3\}$.
If the cluster contains the precolored face, then the initial charge on the cluster is $-3$.
If one of $f_1, f_2$ or $f_3$ is precolored, the cluster receives charge $4 \cdot \frac{1}{2}$ by (R0) and charge $4 \cdot \frac{3}{8}$ by (R1). 
If $f_4$ is precolored, then the cluster receives charge $6\cdot \frac{1}{2}$ by (R0). 
In either case, the resulting final charge is nonnegative.

If the cluster does not contain the precolored face, then the initial charge on the cluster is $-4$.
By (R1), the cluster receives charge $6 \cdot \frac{3}{8}$, resulting in charge $-\frac{7}{4}$.
Let $u_1$, $u_2$, $u_3$, $u_4$, $u_5$, and $u_6$ be the vertices on the boundary of the cluster ordered such that $u_2, u_4, u_6$ are the vertices incident to $f_1$ and $f_2$, $f_2$ and $f_3$, and $f_3$ and $f_1$, respectively.
Since $G$ contains no \ref{diamond1}, there are at least two full vertices in $\{u_2, u_4, u_6\}$.
By (R0) or (R2), these vertices each send charge at least $1$ to the cluster, resulting in a positive total charge.

\mycase{(K6d)} Let $f_1$, $f_2$, $f_3$, and $f_4$ be cyclically-ordered 3-faces in a 4-wheel with center vertex $v$ where $f_i$ and $f_{i+1}$ share a common edge for $i\in \{1,2,3,4\}$, where indices are taken modulo 4; let $g$ be a 3-face adjacent to $f_4$ but not incident to $v$, completing our cluster (K6d).
If the cluster contains the precolored face, then the initial charge on the cluster is $-4$.
If $f_1$ or $f_3$ is precolored,  then the cluster receives charge $6 \cdot \frac{1}{2}$ by (R0) and charge $4 \cdot \frac{3}{8}$ by (R1).
If $f_2$ is precolored, then the cluster receives charge $5\cdot \frac{1}{2}$ by (R0) and charge $4 \cdot \frac{3}{8}$ by (R1). 
If $f_4$ is precolored, then  the cluster receives charge $7\cdot \frac{1}{2}$ by (R0) and charge $5 \cdot \frac{3}{8}$ by (R1). 
In each of the above cases, the final charge is nonnegative.
If $g$ is precolored,  then the cluster receives charge $4\cdot \frac{1}{2}$ by (R0) and charge $3 \cdot \frac{3}{8}$ by (R1), resulting in charge $-\frac{7}{8}$. 
Let $N(v) = \{ u_1, u_2, u_3, u_4 \}$ where $u_i$ is incident to $f_i$ and $f_{i+1}$ for all $i\in \{1,2,3,4\}$. 
Since $G$ does not contain \ref{diamond1}, $u_1$ and $u_2$ are full vertices. 
Each of $u_1$ and $u_2$ sends charge at least $2 \cdot \frac{1}{3}$ to the cluster by (R2), resulting in nonnegative charge.

If the cluster does not contain the precolored face, then the initial charge on this cluster is $-5$ and $v \notin V(P)$.
By (R1), the cluster receives charge $5 \cdot \frac{3}{8}$, resulting in charge $-\frac{25}{8}$.
Since $G$ does not contain \ref{diamond1}, $u_1$, $u_2$, $u_3$, and $u_4$ are full vertices.
By (R0) or (R2), the cluster receives charge at least $2 \cdot \frac{1}{3}$ from each of $u_1$ and $u_2$ and charge at least $3 \cdot \frac{1}{3}$ from each of $u_3$ and $u_4$, resulting in a positive final charge.

\begin{figure}[h]
\centering
\begin{tabular}[h]{VV}
	\begin{tikzpicture}[scale=0.75]
	\node[shape=coordinate] (a) at (0,0) {};
	\node[shape=coordinate, label=above right:$v$] (a1) at (-0.1,-0.05) {};
	\node[shape=coordinate, label=above:$u_1$] (b) at (18:1.5) {};
	\node[shape=coordinate, label=above:$u_5$] (c) at (90:1.5) {};
	\node[shape=coordinate, label=above:$u_4$] (d) at (162:1.5) {};
	\node[shape=coordinate, label=left:$u_3$] (e) at (234:1.5) {};
	\node[shape=coordinate, label=right:$u_2$] (f) at (306:1.5) {};

	\foreach \p in {a,b,c,d,e,f}
	{
		\fill (\p) circle (2pt);
	}

	\draw (a) -- (b) -- (c) -- (d) -- (e) -- (f) -- (b);
	\draw (b) -- (a) -- (c);
	\draw (d) -- (a) -- (e);
	\draw (a) -- (f);

	\node at (54:0.85) {$f_1$};
	\node at (342:0.85) {$f_2$};
	\node at (270:0.85) {$f_3$};
	\node at (198:0.85) {$f_4$};
	\node at (126:0.85) {$f_5$};

\end{tikzpicture} 
	&
	\begin{tikzpicture}[scale=1]
	\node[shape=coordinate, label=below left:$u_1$] (a) at (0,0) {};
	\node[shape=coordinate] (b) at (180:1.25) {};
	\node[shape=coordinate, label=above:$z$] (c) at (60:1) {};
	\node[shape=coordinate, label=below:$w$] (d) at (-60:1) {};
	\node[shape=coordinate, label=below right:$u_2$] (e) at (0:1) {};
	\node[shape=coordinate] (f) at (0:2.25) {};

	\foreach \p in {a,b,c,d,e,f}
	{
		\fill (\p) circle (2pt);
	}

	\draw (b) -- (a) -- (c) -- (e) -- (d) -- (a) -- (e) -- (f);

	\node at (33:0.6) {$f_1$};
	\node at (-32:0.57) {$f_2$};

	\draw (b) .. controls +(90:0.5) and +(180:0.5) .. (c);
	\draw (c) .. controls +(0:0.5) and +(90:0.5) .. (f);
	\draw (f) .. controls +(-90:0.5) and +(0:0.5) .. (d);
	\draw (d) .. controls +(180:0.5) and +(-90:0.5) .. (b);
\end{tikzpicture} 
	\\
	(K6e) & (K6f) 
\end{tabular}
\caption{Clusters (K6e) and (K6f).}
\end{figure}
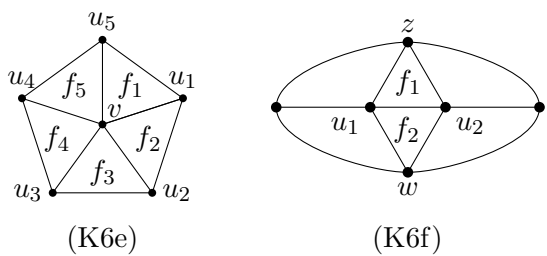

\mycase{(K6e)} Let $f_1$, $f_2$, $f_3$, $f_4$, and $f_5$ be the cyclically-ordered 3-faces in a 5-wheel with center vertex $v$ where $f_i$ and $f_{i+1}$ share a common edge for $i \in \{1,2,3,4,5\}$, where indices are taken modulo 5.
Let $N(v) = \{u_1, u_2, u_3, u_4, u_5\}$ where $u_i$ is incident to $f_i$ and $f_{i+1}$ for $i\in \{1,2,3,4,5\}$.
If the cluster contains the precolored face, then the initial charge on the cluster is $-4$. 
The cluster receives charge $6\cdot \frac{1}{2}$ by (R0) and charge $4 \cdot \frac{3}{8}$ by (R1), resulting in a positive final charge.

If the cluster does not contain the precolored face, then the initial charge is $-5$ and $v \notin V(P)$.
By (R1), the cluster receives charge $5 \cdot \frac{3}{8}$, and by (R2), the cluster receives charge $1$ from $v$, resulting in charge $-\frac{17}{8}$.
Since $G$ does not contain \ref{diamond2} or \ref{K6hconfig1}, there are at least three full vertices in $N(v)$. 
If $N(v)$ contains at least three heavy vertices, then the cluster receives charge at least $6 \cdot \frac{1}{2}$ by (R0) or (R2b), resulting in a positive final charge.
If $N(v)$ contains exactly two heavy vertices, then the cluster receives charge $4 \cdot \frac{1}{2}$ by (R0) or (R2b) and charge $2 \cdot \frac{1}{3}$ from a full vertex by (R2a), resulting in positive charge.
If $N(v)$ contains exactly one heavy vertex, then the cluster receives charge $2 \cdot \frac{1}{2}$ by (R0) or (R2b) and charge $2\cdot \frac{1}{3}$ from each of two full vertices by (R2a), resulting in positive final charge. 

If $N(v)$ contains no heavy vertices, then there are at least three full vertices in $N(v)$.
Since $G$ does not contain \ref{diamond2}, there are at least two nonadjacent 5-vertices in $N(v)$. 
Further, since $G$ does not contain \ref{K6hconfig1}, \ref{K6hconfig2}, or \ref{K6hconfig3}, there are at least four 5-vertices in $N(v)$. 
The cluster receives charge $2 \cdot \frac{1}{3}$ from each of these vertices by (R2a), resulting in a positive final charge.

\mycase{(K6f)} Let $f_1$ and $f_2$ be the interior 3-faces in the two overlapping 4-wheels that make up the cluster (K6f). 
Let $u_1$ and $u_2$ be the shared vertices of $f_1$ and $f_2$ and let $z$ and $w$ be the vertices incident with $f_1$ and $f_2$, respectively, that have not yet been labeled. 
Since $G$ contains no \ref{diamond1}, at least one of $u_1$ and $u_2$ is in $V(P)$. 
Then since all the precolored vertices lie on a common face, the cluster contains the precolored face, so the initial charge is $-5$. 
If $f_1$ or $f_2$ is precolored, then the cluster receives charge $8 \cdot \frac{1}{2}$ by (R0) and charge $4 \cdot \frac{3}{8}$ by (R1), resulting in a positive final charge. 
If one of the other 3-faces is precolored, then the cluster receives charge $6\cdot \frac{1}{2}$ by (R0) and charge $3\cdot \frac{3}{8}$ by (R1), resulting in charge $-\frac{7}{8}$. 
Since $G$ contains no \ref{diamond1}, one of $w$ and $z$ is a non-precolored $5^+$-vertex. 
This vertex sends charge at least $3\cdot \frac{1}{3}$ to the cluster by (R2), resulting in a positive final charge.
\end{mycases}

We have verified that the total charge after discharging is nonnegative, contradicting the negative initial charge sum.
Thus, a minimal counterexample does not exist and every planar graph with no chorded 7-cycle is $(4,2)$-choosable.
\end{proof}


\section{Conclusion and Future Work}

We proved that, for each $k \in \{5,6,7\}$, planar graphs with no chorded $k$-cycles are $(4,2)$-choosable. 
Our methods for proving reducible configurations created several large classes of reducible configurations, such as templates; naturally, there are many more reducible configurations than the ones we explicitly used.
One could likely prove that if $G$ is a planar graph with no chorded 4-cycle and no doubly-chorded 7-cycle, then $G$ is (4,2)-choosable using methods similar to those in this paper. 
We were unable to extend these results to prove Conjecture \ref{conj:42choosable}, that all planar graphs are $(4,2)$-choosable. 

\section*{Acknowledgments}
We thank Ryan R. Martin, Alex Nowak, Alex Schulte, and Shanise Walker for participation in the early stages of the project.

{\small
\bibliographystyle{abbrv}
\bibliography{choosability.bib}

\begin{thebibliography}{10}

\bibitem{alon}
N.~Alon and M.~Tarsi.
\newblock Colorings and orientations of graphs.
\newblock {\em Combinatorica}, 12(2):125--134, 1992.

\bibitem{BorodinSurvey}
O.~V. Borodin.
\newblock Colorings of plane graphs: a survey.
\newblock {\em Discrete Mathematics}, 313(4):517--539, 2013.

\bibitem{borodin}
O.~V. Borodin and A.~O. Ivanova.
\newblock Planar graphs without triangular 4-cycles are 4-choosable.
\newblock {\em Sib. \'Elektron. Mat. Izv.}, 5:75--79, 2008.

\bibitem{choi}
I.~Choi, B.~Lidick\'y, and D.~Stolee.
\newblock On choosability with separation of planar graphs with forbidden
  cycles.
\newblock {\em Journal of Graph Theory}.
\newblock to appear.

\bibitem{CW}
D.~Cranston and D.~B. West.
\newblock A guide to the discharging method.
\newblock arXiv:1306.4434 [math.CO].

\bibitem{farzad}
B.~Farzad.
\newblock Planar graphs without 7-cycles are 4-choosable.
\newblock {\em SIAM Journal of Discrete Mathematics}, 23(3):1179--1199, 2009.

\bibitem{fijavz}
G.~Fijavz, M.~Juvan, B.~Mohar, and R.~\v{S}krekovski.
\newblock Planar graphs without cycles of specific lengths.
\newblock {\em European Journal of Combinatorics}, 23(4):377--388, 2002.

\bibitem{kl}
H.~Kierstead and B.~Lidick\'y.
\newblock On choosability with separation of planar graphs with lists of
  different sizes.
\newblock {\em Discrete Mathematics}, 339(10):1779--1783, 2015.

\bibitem{krat}
J.~Kratochv\'il, Z.~Tuza, and M.~Voigt.
\newblock Brooks-type theorems for choosability with separation.
\newblock {\em Journal of Graph Theory}, 27(1):43--49, 1998.

\bibitem{lam}
P.~Lam, B.~Xu, and J.~Liu.
\newblock The 4-choosability of plane graphs without 4-cycles.
\newblock {\em Journal of Cominatorial Theory, Series B}, 76(1):117--126, 1999.

\bibitem{thomassen}
C.~Thomassen.
\newblock Every planar graph is 5-choosable.
\newblock {\em Journal of Combinatorial Theory, Series B}, 62(1):180--181,
  1994.

\bibitem{voigt1993list}
M.~Voigt.
\newblock List colourings of planar graphs.
\newblock {\em Discrete Mathematics}, 120(1):215--219, 1993.

\bibitem{skrekovski}
R.~\v{S}krekovski.
\newblock A note on choosability with separation for planar graphs.
\newblock {\em Ars Combinatoria}, 58:169--174, 2001.

\bibitem{wang}
W.~Wang and K.~Lih.
\newblock Choosability and edge choosability of planar graphs without five
  cycles.
\newblock {\em Applied Mathematics Letters}, 15(5):561--565, 2002.

\bibitem{west}
D.~B. West.
\newblock {\em Introduction to graph theory, Second eition}.
\newblock Prentice Hall, 2001.

\end{thebibliography}
}

\small
\appendix
\section{Large Reducible Configurations}\label{appx:reducible}

In the proof of Theorem~\ref{thm:5cycles}, we demonstrated that no minimal counterexample exists by showing that there exists a reducible configuration $(C,X,\ex)$ where $G$ contains a copy of $C[X]$ as an induced subgraph (and also the copy agrees with the external degrees).
In this appendix, we provide the details that clarify this assumption.
By Lemma~\ref{lma:iteration}, we can relax the condition that $C[X]$ is an induced subgraph.
We will demonstrate that the configurations that appear after some vertices in $X$ are merged (while also preserving the face lengths, vertex degrees, and lack of chorded 5-cycle) result in reducible configurations.

Let $(C,X, \ex)$ be a reducible configuration and let $\{x_1,x_1'\},\dots,\{x_t,x_t'\}$ be a list of vertex pairs in $X$.
For these configurations, we may identify some 3-cycles and 5-cycles that are required to be 5-faces (in the context of the proof of Theorem~\ref{thm:5cycles}).
The resulting configuration $(C',X',\ex)$ where $C'$ and $X'$ are modified from $C$ and $X$ by merging $x_i$ with $x_i'$ and removing any multiedges or loops that result.
We say a list $\{x_1,x_1'\},\dots,\{x_t,x_t'\}$ is \emph{valid} for $(C,X,\ex)$ if the resulting configuration $(C',X',\ex)$ may appear in a planar graph of minimum degree at least four containing no chorded 5-cycle.
There are three situations that can occur when we perform this action.

\noindent\textbf{Pairs too close:} If some pair $\{x_i, x_i'\}$ have $d(x_i,x_i') \leq 2$, then either we create a loop or a multiedge when merging $x_i$ and $x_i'$. This will reduce the degree of the resulting vertex, in addition to possibly shortening known 3- and 5-cycles. Since distances only decrease as vertices are merged, a pair failing this property will not appear in any valid list of pairs.

\noindent\textbf{Pairs creating chord:} If merging $x_i$ and $x_i'$ creates a chorded 5-cycle, then this configuration would not appear in the minimal counterexample from Theorem~\ref{thm:5cycles}. Since distances only decrease as vertices are merged, a pair failing this property will not appear in any valid list of pairs.

\noindent\textbf{Reducible pairs:} If merging $x_i$ and $x_i'$ does not fit in the above two cases, then we will demonstrate that the resulting configuration is reducible. Even if merging one pair of vertices creates a reducible configuration, we need to check all possible lists of pairs that contain that pair.

After considering all pairs that could be identified, observe that in each case there is no set of three or more vertices where every pair can be identified.

In the following tables, we list one of the configurations \ref{d2}--\ref{lastconf}, label the vertices, and list all pairs of vertices into the three categories above.
In the case of reducible pairs, we present the contracted graph.
Most of these contracted graphs contain a copy of \ref{cycle4}, \ref{cycle6}, \ref{d2}, \ref{3paths}, or \ref{3pathsB}.
The only exceptions are the contracted graphs derived from \ref{bigneedy}, but each of these configurations has an Alon-Tarsi orientation and hence is reducible.

\input{figures/appendix.tex}

%
%

\end{document}